\newtheorem{theorem}{Theorem}[section]
\newtheorem{lemma}[theorem]{Lemma}
\newtheorem{proposition}[theorem]{Proposition}
\theoremstyle{definition}
\newtheorem{example}[theorem]{Example}
\theoremstyle{remark}
\newtheorem{remark}[theorem]{Remark}
\numberwithin{equation}{section}
\newcommand{\be}{\begin{eqnarray}}
\newcommand{\en}{\end{eqnarray}}
\newcommand{\nn}{\nonumber}
\newcommand{\bmf}{\boldsymbol}
\newcommand{\mr}{\mathrm}
\newcommand{\corrauthor}[1]{#1$^{*}$}
\begin{document}
\title{Finite volume element method for Landau-Lifshitz equation}
%    Information for first author
\author{Yunjie Gong}
%    Address of record for the research reported here
\address{School of Mathematical Sciences, Soochow University, Suzhou 215006, China}
% %    Current address
% \curraddr{School of Mathematical Sciences, Soochow University, Suzhou 215006, China}
\email{gyj1564763342@163.com}
%    \thanks will become a 1st page footnote.
%\thanks{The first author was supported in part by NSF Grant \#000000.}

\author{Jingrun Chen}
\address{School of Mathematical Sciences and Suzhou Institute for Advanced Research, University of Science and Technology of China}
\address{Suzhou Big Data \& AI Research and Engineering Center, Suzhou, China}
\email{jingrunchen@ustc.edu.cn}

\author{\corrauthor{Rui Du}}
\address{School of Mathematical Sciences, Soochow University, Suzhou 215006, China}
\email{durui@suda.edu.cn}
%    Information for second author
\author{\corrauthor{Panchi Li}}
\address{School of Mathematical Sciences, Soochow University, Suzhou 215006, China}
% \address{Department of Mathematics, The University of Hong Kong, Hong Kong, China}
\email{lipch@suda.edu.cn}
%\thanks{Support information for the second author.}
%    General info

% \subjclass[2000]{Primary 54C40, 14E20; Secondary 46E25, 20C20}

% \date{January 1, 2023 and, in revised form, June 22, 2001.}

%\dedicatory{This paper is dedicated to our advisors.}
\thanks{$^{*}$ Corresponding authors.}

\begin{abstract}
The Landau-Lifshitz equation describes the dynamics of magnetization in ferromagnetic materials. Due to the essential nonlinearity and nonconvex constraint, it is typically solved numerically. In this paper, we developed a finite volume element method (FVEM) with the Gauss-Seidel projection method (GSPM) for the micromagnetics simulations. We provide the approximation error in space and depict the energy law when the FVEM is adopted. Owing to the GSPM for time-marching, the discrete system is decoupled component by component, making the computational complexity comparable to that of solving the scalar heat equation implicitly. This significantly accelerates real simulations. We present several numerical experiments to validate the theoretical analysis and the efficiency gain. Additionally, we study the blow-up solution and efficiently simulate the 2D magnetic textures using the proposed method.
\end{abstract}

\subjclass{65N12; 65M60}
\keywords{Finite volume element method; Landau-Lifshitz equation; micromagnetics simulations; phenomenon of blow-up}

\maketitle

\section{Introduction}
In micromagnetics, the fundamental quantity of interest is the magnetization,
a three-dimensional vector field with constant magnitude at each lattice point when the system is below the Curie temperature.
Subject to various external controls,
the system exhibits rich dynamic behaviors and evolves toward equilibrium guided by the Landau-Lifshitz~(LL) equation~\cite{Landau,Gilbert1955}.
The LL equation consists of a gyromagnetic term and a damping term, which model the precession motion and energy dissipation, respectively. They are both nonlinear and conserve the constant length as time evolves, which poses challenges for analytical dynamics studies and numerical simulations of magnetic materials.

The mathematical theory and numerical methods for the LL equation have attracted considerable attention in the past few decades (see, e.g., \cite{Kruz,Maekawa,Shinjo,Melcher,Zhou}).
The existence and uniqueness of weak solutions have been established in various works \cite{Alouges,Carbou1,Guo}.
Alouges and Soyeur established the existence of a global weak solution in the three-dimensional setting \cite{Alouges}.
Guo and Hong established the global existence of solutions and revealed new connections between harmonic maps and  solutions of the LL equation \cite{Guo}.
Meanwhile,
substantial efforts also have been devoted to develop accurate and efficient numerical approaches.
Building on the finite difference method (FDM),
Miltat and Donahue \cite{Miltat} proposed efficient approaches for micromagnetic simulations based on both field-driven and energy-based formulations.
Fuwa et al. \cite{Fuwa} proposed an implicit nonlinear scheme and provided its unique solvability.
Recently,
Chen et al. \cite{chen} developed a second-order semi-implicit projection method for micromagnetics simulations.
Meanwhile, numerical approaches based on the finite element method (FEM) have been continuously developed. Bartels and Prohl \cite{Bartels} proposed a fully implicit nonlinear scheme and demonstrated its unconditional stability.
Shortly thereafter,
Alouges et al. \cite{Alouges1,Alouges2,Alouges3} developed a fully discrete linearized $\theta$-scheme and provided the corresponding stability condition.
Gao \cite{Gao} proposed a linearized backward Euler method and established optimal error estimates in both the $L^2$ and $H^1$ norms.
Besides, An \cite{an} gave the optimal error estimates for the linearized Crank-Nicolson Galerkin method.
Among these approaches, either a nonlinear system or a linearized system with variable coefficients must be solved due to the nonlinearity of the equation.
To improve simulations' efficiency, a series of Gauss-Seidel projection methods (GSPMs) have been developed in the past decades~\cite{GSPM2001JCP,Li,li2024enhanced}, in which only heat equations with constant coefficients need to be solved.

Our motivation is adopting a robust spatial discretisation method, the finite volume element method (FVEM), for the micromagnetics simulations.
The FVEM is to approximate the discrete fluxes in the conventional finite volume method by replacing the finite element approximation of the solution for PDEs \cite{cai1}.
Similar to the FEM, the FVEM exhibits a capability of handling complex boundary conditions and irregular geometries.
Moreover, it generally features a simpler solver structure and facilitates the implementation of advanced techniques with low computational costs.
Recent developments of the FVEM for various complex problems have been made to further demonstrate its capabilities and advantages, such as for moving-domain convection-diffusion problems \cite{GAO2021113537}, the Darcy flow in fractured media \cite{chen2022finite}, and nonlinear parabolic equations \cite{AAMM-17-3}. Another aspect of the developments of the FVEM is the high-order scheme. Its construction is not a direct extension of the high-order FEM, and some significant progresses have been made in recent years (see, for example \cite{Zhang2015,Zhang2014,zhou2023three,wu2021finite}). 

In this paper, we develop a novel numerical approach for the strongly nonlinear LL equation that combines the FVEM in space and GSPM in time. Due to the inherent difficulty of error analysis for GSPM, the theoretical analysis only considers the spatial approximation error. Nevertheless, it is still a challenging task to analyze the approximation error of FVEM because of the nonlinearity of the LL equation. In order to achieve the target,
we construct a linear auxiliary problem with variable coefficients and prove the approximation error of the FVEM.
In a specific setting,
the auxiliary problem becomes a linearized implicit scheme of the LL equation,
which enables us to estimate the approximation error for the original problem.
Thereafter, the discrete energy dissipation is naturally obtained.
%Since the GSPM is adopted for time evolutions,
%a rigorous convergence analysis of the temporal error is currently unavailable.
We conduct several numerical experiments to verify the
theoretical analysis. To further explore the proposed numerical method, we study the blow-up solution of the LL equation in cases where the error analysis near the blow-up time becomes unrealistic. Besides, we also simulate the static magnetic textures in 2D ferromagnetic films subject to the Dirichlet boundary condition. Thanks to the implementation of the GSPM, we can study the micromagnetic dynamics at a cost comparable to that of solving the heat equation with constant coefficients. These demonstrate that the proposed method offers a promising and efficient approach to simulating complex micromagnetic systems.

The rest of this paper is organized as follows.
In \Cref{Sec2:Basic Model and numerical methods}, we introduce the basis model and the FVEM.
In \Cref{Sec3:Some auxiliary lemmas and error analysis for FVE method}, we present the error estimate and derive the discretized energy dissipation law.
The numerical approach is detailed and numerical tests are conducted as in \Cref{sec4: Temporal discretization and numerical experiments}.
Conclusions are drawn in \Cref{sec5:Conclusions}.

\section{Mathematical Model and numerical methods \label{Sec2:Basic Model and numerical methods}}
\subsection{The LL equation}
The fundamental form of the  LL equation is given by
\begin{equation*}
\partial_t\bmf{M}
=
-\mu_0\gamma\bmf{M} \times \mathcal{H}
- \frac{ \mu_0\gamma \alpha } {{M}_s} \bmf{M} \times (\bmf{M} \times \mathcal{H}),
\label{LL1}
\end{equation*}
where $\bmf{M} = (M_1, M_2, M_3)^T$ is the magnetization that satisfies $|\bmf{M}| = M_s$ with $M_s$ being the saturation magnetization,
$\gamma$ is the gyromagnetic ratio,
$\alpha$ is the damping parameter,
$\mu_0$ is the magnetic permeability of vacuum,
and $\mathcal{H}$ is the effective field given by the variation calculation $\mathcal{H} = -(1/\mu_0)\delta\mathcal{F}/\delta\bmf{M}$ of the magnetic free energy
\begin{equation*}
\mathcal{F}[\bmf{M}]
=
\int_{\Omega}\frac{A}{M_s^2}|\nabla \bmf{M}|^2 \,\mr{d}\bmf{x}
+
\int_{\Omega}\Phi\left(\frac{\bmf{M}}{M_s}\right)\,\mr{d}\bmf{x}
-
\mu_0\int_{\Omega}\bmf{H}_{\mr{e}}\cdot\bmf{M}\,\mr{d}\bmf{x}
+
\frac{\mu_0}{2}\int_{\Omega}\bmf{H}_{\mr{s}} \cdot \bmf{M}\,\mr{d}\bmf{x}.
\label{LL2}
\end{equation*}
In the energy functional,
$\Omega$ denotes the volume occupied by the ferromagnetic body,
$A$ is the exchange constant,
$\Phi\left(\frac{\bmf{M}}{M_s}\right)$ is a smooth function that defines the anisotropy,
and $\bmf{H}_{\mr{e}}$ is the external magnetic field.
In the last term,
$\bmf{H}_{\mr{s}}$ denotes the stray field, which is simplified to $\bmf{H}_{\mr{s}} \cdot \bmf{M} = {M}_3^2$  under the assumption of ultra-thin films.
Let the uniaxial anisotropy be along with $\bmf{e}_1 = (1, 0, 0)^T$,
and the effective field is calculated as
\begin{equation*}
\mathcal{H}
=
-\frac{\delta \mathcal{F}}{\delta \bmf{M}}
=
\frac{2 A}{M_s^2}\Delta \bmf{M}
-
\frac{2 K_{u}}{M_s^2}(M_2\bmf{e_2}+M_3\bmf{e_3})
-
\mu_0 M_3 \bmf{e}_3
+
\mu_0 \bmf{H}_{\mr{e}},
\end{equation*}
where $\bmf{e}_2=(0,1,0)^T$, $\bmf{e}_3=(0,0,1)^T$.
Define $\bmf{m}=M_s \bmf{M}$, $\bmf{H}_{\mr{s}} = M_s \bmf{h}_{\mr{s}}$ and $\bmf{H}_{\mr{e}}=M_s \bmf{h}_{\mr{e}}$.
Apply the spatial rescaling $\bmf{x}\rightarrow L \bmf{x}$ and
then, the magnetic free energy can be written as $\mathcal{F} = (\mu_0 M_s^2)F$ with
\begin{equation*}
F[\bmf{m}]
=
\frac{\epsilon}{2} \int_{\Omega}|\nabla \bmf{m}|^2 \,\mr{d}\bmf{x}
+
\frac{q}{2} \int_{\Omega}({m}_2^2+{m}_3^2) \,\mr{d}\bmf{x}
-
\int_{\Omega}\bmf{h}_{\mr{e}}\cdot\bmf{m} \,\mr{d}\bmf{x}
+
\frac{1}{2}\int_{\Omega}{m}_3^2 \,\mr{d}\bmf{x},
% \label{LL3}
\end{equation*}
where
\begin{equation*}
    \epsilon=\frac{2 A}{\mu_0 M_s^2 L^2},\quad q=\frac{2 K_{u}}{\mu_0 M_s^2}.
\end{equation*}

Furthermore, the dimensionless LL equation is induced by applying the time rescaling $ t\rightarrow (\mu_0 \gamma M_s)^{-1}t$
\begin{equation}
\partial_t\bmf{m}
=
-
\bmf{m} \times {\bmf{h}}
-
\alpha \bmf{m} \times (\bmf{m} \times {\bmf{h}}),
\label{LL4}
\end{equation}
where
\begin{equation*}
\bmf{h} = \epsilon\Delta \bmf{m}-q(m_2\bmf{e_2}+m_3\bmf{e_3})-m_3 \bmf{e_3}+ \bmf{h}_{\mr{e}}.
\end{equation*}
The above LL form \eqref{LL4} will be used in the micromagnetics simulations, while to 
illustrate the main ideas of the numerical approach, we consider its simplified form
\begin{equation*}
  \partial_t\bmf{m}
  =
  -
  \bmf{m} \times \Delta\bmf{m}
  -
  \alpha \bmf{m} \times (\bmf{m} \times \Delta\bmf{m}).
\end{equation*}
Due to the non-convex constraint $|\bmf{m}| = 1$, this equation can be rewritten into
\begin{equation}
  \partial_t\bmf{m}
  -
  \alpha\Delta\bmf{m}
  +
  \bmf{m} \times \Delta\bmf{m}
  =
  \alpha |\nabla\bmf{m}|^2\bmf{m}.
  \label{equ:simplicity-LL-dimensionless}
\end{equation}
The numerical analysis of this equation will then be conducted in the sequel.
For the well-posedness, we adopt the initial condition and the Dirichlet boundary condition
\begin{gather}
    \bmf{m}(\bmf{x}, 0) = \bmf{m}_0(\bmf{x} ),\quad x\in \Omega\label{initial-condition}\\
    \bmf{m}(\bmf{x} ) = \bmf{g}(\bmf{x} ), \quad x\in\partial\Omega,\label{boundary}
\end{gather}
in which $|\bmf{m}_{0}| = |\bmf{g}| = 1$ holds in a point-wise sense.

\subsection{FVEM approximation}
We begin by introducing some notations that will be used throughout this paper.
The standard notation of Sobolev spaces $W^{s,p}(\Omega)$ ($1\leq p\leq\infty$) consists of functions that have generalized derivatives of order $s$ in $L^p(\Omega)$ that equips the norm
\begin{equation*}
\|m\|_{W^{s,p}}
=
\left( \int_{\Omega}\sum_{|\alpha|\leq s}|D^{\alpha}m|^{p} \, \mr{d}x \right)^{\frac{1}{p}}
\end{equation*}
with the standard modification for $p=\infty$. We then use
\( \bmf{L}^p \) and \( \bmf{W}^{k,r} \) to denote the vector Sobolev space $ (L^p(\Omega))^3 $ and $(W^{k,r}(\Omega))^3$, respectively.
In particular, we write \( \bmf{H}^k :=\bmf{W}^{k,2} \), and particularly define
\begin{equation*}
\bmf{X} = L^{\infty}([0, T]; \bmf{H}^{2}(\Omega)) \cap L^2([0, T]; \bmf{H}^3(\Omega)).
    \label{X}
\end{equation*}

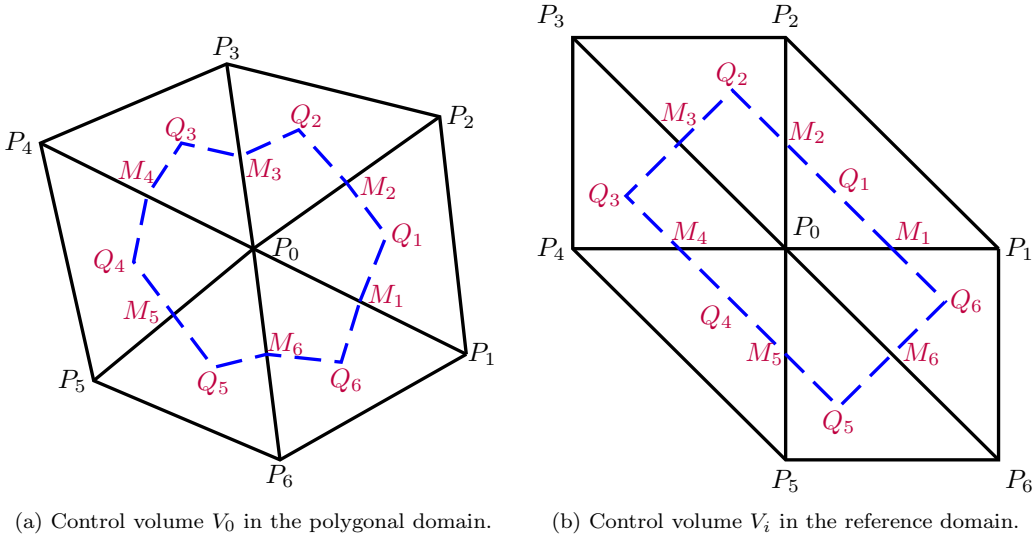
\begin{figure}[htbp]
\begin{tikzpicture}[>=latex,scale=0.7]
\begin{scope}
		 \draw[line width = 1.25pt] (0,0)node[right=0.1cm]{$P_0$} --  (4,-2);node%[left=0.1cm]{$P_1$};
		 \draw[line width = 1.25pt] (0,0) -- (3.5,2.5);%node[right=0.1cm]{$P_2$};
		 \draw[line width = 1.25pt] (0,0) -- (-0.5,3.5);%node[above=0.1cm]{$P_3$};
         \draw[line width = 1.25pt] (0,0) -- (-4,2);%node[left=0.1cm]{$P_4$};
         \draw[line width = 1.25pt] (0,0) -- (-3,-2.5);%node[left=0.1cm]{$P_5$};
         \draw[line width = 1.25pt](0,0) -- (0.5,-4);%node[below=0.1cm]{$P_6$};
         %%%%%%%%%%%%%%%%%%%%%%%%%%%%%%%%%%%%%%%%%%%%%%%%%%%
        \draw[line width = 1.25pt] (4,-2) to (3.5,2.5) to (-0.5,3.5) to(-4,2) to (-3,-2.5) to (0.5,-4) to (4,-2);
%%%%%%%%%%%%%%%%%%%%%%%%%%%%%%%%%%%%%%%%%%%%%%%%%%%
         \draw[dash pattern = on 10pt off 5pt, line width = 1.25pt, color = blue]
         (2,-1)%node[right=0.1cm]{$M_1$}
         to (2.5,0.25)%node[right=0.1cm]{$Q_1$}
         to (1.75,1.25)%node[below=0.1cm]{$M_2$}
         to(0.85,2.25)%node[above=0.1cm]{$Q_2$}
         to (-0.25,1.75)%node[above=0.1cm,right = 0.1cm]{$M_3$}
         to (-1.35,2)%node[above=0.1cm]{$Q_3$}
         to (-2,1)%node[left=0.1cm]{$M_4$}
         to (-2.25,-0.25)%node[left=0.1cm]{$Q_4$}
         to (-1.5,-1.25)%node[below=0.05cm]{$M_5$}
         to (-0.75,-2.25)%node[below=0.1cm]{$Q_5$}
         to (0.25,-2)%node[below=0.1cm]{$M_6$}
         to (1.65,-2.15)%node[below=0.1cm]{$Q_6$}
         to (2,-1);
         %%%%%%%%%%%%%%%%%%%%%%%%%%%%%%
         %\node at (0,0) {\TeX};
         %\node [<options>] (<name>) at(<coordinate>){<node contents>};
         \node at (4.3,-2){$P_1$};
         \node at (3.9,2.5){$P_2$};
         \node at (-0.5,3.8){$P_3$};
         \node at (-4.4,2){$P_4$};
         \node at (-3.4,-2.5){$P_5$};
         \node at (0.5,-4.3){$P_6$};
         \node[text=purple] at (2.5,-0.9){$M_1$};
         \node[text=purple] at (2.35,1.15){$M_2$};
         \node[text=purple] at (0.2,1.5){$M_3$};
         \node[text=purple] at (-2.2,1.45){$M_4$};
         \node[text=purple] at (-2.1,-1.2){$M_5$};
         \node[text=purple] at (0.6,-1.8){$M_6$};
         \node[text=purple] at (2.9,0.25){$Q_1$};
         \node[text=purple] at (1,2.5){$Q_2$};
         \node[text=purple] at (-1.35,2.25){$Q_3$};
         \node[text=purple] at (-2.7,-0.25){$Q_4$};
         \node[text=purple] at (-0.75,-2.55){$Q_5$};
         \node[text=purple] at (1.75,-2.5){$Q_6$};
          % \node at (0,-5.2){\footnotesize (a) Control volume $V_0$ in the polygonal domain.};
          \end{scope}
  \end{tikzpicture}
 \caption{The control volume $V_0$ centered at $P_0$ in the polygonal domain.}
\label{Fig1}
\end{figure}

Let $\Omega$ be a polygonal domain.
We consider the quasi-uniform regular triangulation $T_h$ of $\Omega$, consisting of closed triangle elements $K$ such that $\bar{\Omega}=\cup_{K\in T_h}K$. Let $\mathcal{N}_h$ denote the set of all nodes with $N = |\mathcal{N}_h|$, and then define $\mathcal{N}_h^{0} = \mathcal{N}_h\cap\Omega$.
In~\Cref{Fig1}, the control volume associated with the node $P_0$ in the FVEM is represented by the polygon $M_1 Q_1 \cdots M_6 Q_6$,
where $M_i$ and $Q_i$ $(i = 1, \cdots, 6)$ are the points on edges and in the interior of elements, respectively.

For any arbitrary point $P_i \in \mathcal{N}_h$, control volumes $V_i$ associated with each node $P_i$ form the dual mesh $T_h^*$ in the polygonal region.

Let $h$ denote the maximum diameter of all elements in $T_h$.
We assume that the dual mesh $T_h^*$ is quasi-uniformly regular, i.e., there exists a positive constant $C$ such that
\begin{equation*}
C^{-1}h^2 \leq \mr{meas}(V_i) \leq Ch^2, \quad \forall\; V_i\in T_h^*.
\end{equation*}

Next, we introduce the discrete trial and test spaces.
Let $P_1(K)$ denote the space of linear polynomials on $K \in T_h$.
The trial space is defined as the conforming piecewise linear finite element space with homogeneous Dirichlet boundary condition:
\begin{equation*}
S_h
:=
\left\{
v_h \in H_0^1(\Omega)
:\;
v_h|_K \in P_1(K), \quad \forall\, K\in T_h
\right\}.
\end{equation*}
Equivalently,
\begin{equation*}
S_h=\operatorname{span}\{\phi_i(\bmf{x}) : \bmf{x}_i \in \mathcal N_h^0\},
\end{equation*}
where $\phi_i(\bmf{x})$ is the standard nodal basis function associated with the node $\bmf{x}_i$.
The test space is defined as the space of piecewise constant functions on the dual mesh associated with interior vertices:
\begin{equation*}
S_h^*
:=
\operatorname{span}\{\phi_i^*(\bmf{x}_i) : \bmf{x}_i \in \mathcal N_h^0\},
\end{equation*}
where $\phi_i^*(\bmf{x}_i)$ denotes the characteristic function of $V_i$, i.e.,
\begin{equation*}
\phi_i^*(\bmf{x}_i)
=
\begin{cases}
1, & \bmf{x}_i\in V_i,\\
0, & \text{otherwise}.
\end{cases}
\end{equation*}

For the LL equation, we use $\bmf{S}_h$ and $\bmf{S}_h^*$ denote $(S_h)^3$ and $(S_h^*)^3$,
respectively. Define the interpolation operator $I_h:\bmf{H}_0^1\cap \bmf{H}^2\rightarrow \bmf{S}_h$ such that \cite{thomee2013galerkin}
\begin{equation*}
    I_h \bmf{v}
    =
    \sum_{\bmf{x}_i\in\mathcal{N}_h^0}\bmf{v}(\bmf{x}_i)\phi_i(\bmf{x}),
\end{equation*}
then for all $\bmf{v}\in \bmf{H}_0^1(\Omega)\cap \bmf{H}^2(\Omega)$, we have \cite{brenner}
\begin{equation}
    \|I_h\bmf{v} - \bmf{v}\|_{\bmf{L}^2} \leq Ch^2\|\bmf{v}\|_{\bmf{H}^2},
    \label{In}
\end{equation}
\begin{equation}\label{In-prop}
\|I_h\bmf{v}\|_{\bmf{L}^{\infty}} \leq C \|\bmf{v}\|_{\bmf{L}^{\infty}}.
\end{equation}
Meanwhile, for any $\bmf{v}_h\in \bmf{S}_h$, we define another interpolation operator $I_h^*:\bmf{S}_h\rightarrow \bmf{S}_h^*$,
such that
$$I_h^*\bmf{v}_h=\sum_{\bmf{x}_i\in\mathcal{N}_h^0}\bmf{v}_h(\bmf{x}_i)\phi_i^*(\bmf{x}).$$

For $p>1$, It satisfies \cite{chen2010two,chou2003lp,chat1}
\begin{gather} \label{I_h^*}
    \|I_h^* \bmf{v}_h\|_{W^{0,p}}\leq\|\bmf{v}_h\|_{W^{0,p}}, \qquad
     \|\bmf{v}_h - I_h^* \bmf{v}_h\|_{W^{0,p}} \leq C h^s \|\bmf{v}_h\|_{W^{s,p}}, 0 \leq s \leq 1.
\end{gather}

For the numerical analysis, we assume that within finite time the LL equation \eqref{equ:simplicity-LL-dimensionless}-\eqref{boundary} has a unique local weak solution $\bmf{m}$ satisfying the regularity condition \cite{Gao}:
\begin{equation}
    \|\mathbf{m}\|_{L^{\infty}(0,T;\mathbf{H}^{3})}
   \! + \!
    \|\partial_t \mathbf{m}\|_{L^2(0,T;\mathbf{H}^2)}
   \! + \!
    \|\partial_t \mathbf{m}\|_{L^{\infty}(0,T;\mathbf{H}^1)}
   \! + \!
    \|\partial_{tt} \mathbf{m} \|_{L^2(0,T;\mathbf{L}^2)}
    \! \leq \!
    C_{\mathrm{reg}}.
    \label{regularity-condition}
\end{equation}
Meanwhile, the following bound of the numerical solution is also required \cite{Alouges}
\begin{equation}
        \|\nabla \bmf{m}_h\|_{\bmf{L}^{\infty}} \leq C.
    \label{working_set}
\end{equation}
We remark that with a certain initialization, the solution to the LL equation may blow up in finite time as shown in \cite{bartels2008numerical,Bartels}. In such cases, assumptions of strong regularity, such as those mentioned above, cannot be introduced. Error analysis for the problem near the blow up is still not realistic. Therefore, the theoretical analysis presented in this work is only applicable to scenarios before the blow-up time.

The FVEM approximation begins by
integrating \eqref{equ:simplicity-LL-dimensionless} over a control volume $V_i$ and applying Green's formula, which yields
\begin{equation}
  \int_{V_i} \partial_t\bmf{m}_h \,\mr{d}x \mr{d}y
  -
  \int_{\partial V_i} \alpha \nabla \bmf{m}_h \cdot\bmf{n} \,\mr{d}S
  +
  \int_{\partial V_i} \bmf{m}_h \times (\nabla \bmf{m}_h \cdot\bmf{n}) \,\mr{d}S
  =
  \int_{V_i} \alpha |\nabla \bmf{m}_h|^2 \bmf{m}_h \,\mr{d}x\mr{d}y,
  \label{equ:simplicity-semi-discretized}
\end{equation}
where $\bmf{n}$ denotes the unit outward normal vector of $V_i$. 
Multiplying both sides of \eqref{equ:simplicity-semi-discretized} by $\bmf{v}_h(\bmf{x}_i)$ and taking the sum of all terms $\bmf{x}_i \in \mathcal{N}_h$ yields the discrete scheme for \eqref{equ:simplicity-LL-dimensionless}:
Find $\bmf{m}_h \in \bmf{S}_h$,
such that for all $\bmf{v}_h \in \bmf{S}_h$,
\begin{equation}
\left\{
\begin{aligned}
&(\partial_t\bmf{m}_h, I_h^*\bmf{v}_h)
 +
 \mathcal{A}_h(\bmf{m}_h;\bmf{m}_h,I_h^*\bmf{v}_h)
 =
 \alpha(|\nabla\bmf{m}_h|^2\bmf{m}_h,I_h^*\bmf{v}_h), \\
&\bmf{m}_h(0) = I_h \bmf{m}_0, \\
&\bmf{m}_h(t)|_{\partial\Omega} = \bmf{0},
\end{aligned}
\label{equ:LL-semi-discretized1}
\right.
\end{equation}
where $\mathcal{A}_h(\bmf{m}_h;\bmf{m}_h,I_h^*\bmf{v}_h)
    =
    a_h(\bmf{m}_h,I_h^*\bmf{v}_h)
    +
    b_h(\bmf{m}_h;\bmf{m}_h,I_h^*\bmf{v}_h)$ with
\begin{equation*}
 a_h(\bmf{m}_h,I_h^*\bmf{v}_h)
 =
 - \sum_{\bmf{x}_i\in\mathcal{N}_h}\!\int_{\partial V_i}\alpha\,(\nabla \bmf{m}_h\cdot\bmf{n})\cdot I_h^*\bmf{v}_h \,\mr{d}S
 =
 - \sum_{\bmf{x}_i\in\mathcal{N}_h}\bmf{v}_h(\bmf{x}_i)\int_{\partial
V_i}\!\alpha \,\nabla \bmf{m}_h\cdot\bmf{n}\,\mr{d}S,
\end{equation*}
\begin{equation*}
 b_h(\bmf{m}_h;\bmf{m}_h,I_h^*\bmf{v}_h)
 \!=
 \!\!\!\sum_{\bmf{x}_i\in\mathcal{N}_h}\!\!\int_{\partial V_i}\!\!\!(\bmf{m}_h \! \times \! (\nabla \bmf{m}_h \cdot\bmf{n})) \cdot I_h^*\bmf{v}_h\,\mr{d}S\!
 =
 \!\!\!\sum_{\bmf{x}_i\in\mathcal{N}_h}\!\!\!\bmf{v}_h(\bmf{x}_i)\!\!\!\int_{\partial V_i}\!\!\!\bmf{m}_h \!\times\! (\nabla \bmf{m}_h \cdot\bmf{n}) \,\mr{d}S.
\end{equation*}
To establish the spatial error estimate for FVEM  and derive the discretized energy law,  we define $|\|\bmf{m}_h|\|_{0}^2 = (\bmf{m}_h, I_h^*\bmf{m}_h)$ in $\bmf{S}_h$.
It is straightforward to verify that there exist two positive constants $C_*, C^*$ independent of $h$ such that
\begin{equation}\label{norm-equ}
 C_*\|\bmf{m}_h\|_{\bmf{L}^2}\leq \||\bmf{m}_h|\|_{0}\leq C^*\|\bmf{m}_h\|_{\bmf{L}^2},\quad
\forall\, \bmf{m}_h\in \bmf{S}_h.
\end{equation}

Before the formal error analysis is given,
we state some inequalities.
In the two-dimensional setting,
there exists a constant $C>0$ such that for all $\bmf{v}\in \bmf{H}^2$,
the following inequalities \cite{adams1975sobolev,carbou2001regular} hold:
\begin{eqnarray}\label{Sobolev-inequality}
\begin{aligned}
&\|\bmf{v}\|_{\bmf{L}^r}
\leq
C \|\bmf{v}\|_{\bmf{H}^1} \quad (2 \leq r \leq 6), \\
&\|\bmf{v}\|_{\bmf{L}^4}
\leq
C \|\bmf{v}\|_{\bmf{H}^1}^{\frac{1}{2}}\|\bmf{v}\|_{\bmf{L}^2}^{\frac{1}{2}},\\
&\|\bmf{v}\|_{\bmf{L}^{\infty}}
\leq
C (\|\bmf{v}\|_{\bmf{L}^2}^2 + \|\Delta \bmf{v}\|_{\bmf{L}^2}^2)^{\frac{1}{2}},\\
&\|\nabla \bmf{v}\|_{\bmf{L}^{\infty}}
\leq
C \|\nabla \bmf{v}\|_{\bmf{L}^2}^{\frac{1}{2}}
(\|\nabla \bmf{v}\|_{\bmf{L}^2}^2 + \|\Delta \bmf{v}\|_{\bmf{L}^2}^2 + \|\nabla \Delta \bmf{v}\|_{\bmf{L}^2}^2)^{\frac{1}{4}},\\
&\|\nabla\bmf{v}\|_{\bmf{L}^4}
\leq
C \|\nabla\bmf{v}\|_{\bmf{L}^2}^{\frac{1}{2}}
( \|\nabla\bmf{v}\|_{\bmf{L}^2}^{2} + \|\Delta \bmf{v}\|_{\bmf{L}^2}^{2})^{\frac{1}{4}}.
%&\|\Delta\bmf{v}\|_{\bmf{L}^4}
%\leq
%C \|\Delta\bmf{v}\|_{\bmf{L}^2}^{\frac{1}{2}}
%( \|\Delta\bmf{v}\|_{\bmf{L}^2}^{2} + \|\nabla\Delta\bmf{v}\|_{\bmf{L}^2}^{2})^{\frac{1}{4}}.
\end{aligned}
\end{eqnarray}
In what follows, $C$ denotes a generic positive constant independent of $h$, whose value may change from line to line for notational simplicity.

\section{Auxiliary lemmas and error estimates\label{Sec3:Some auxiliary lemmas and error analysis for FVE method}}

\subsection{An auxiliary problem}\label{auxiliary-problem}
We first define the auxiliary problem as follows:
Given a function $ \bmf{\Phi} \in \bmf{X}$ satisfying $|\bmf{\Phi}| = 1$, where $\bmf{X}$ is defined in \eqref{X}, find $\tilde{\bmf{m}}$ such that
\begin{equation}
    \left\{\begin{aligned}
    &\partial_t\tilde{\bmf{m}}
    -
    \alpha \Delta\tilde{\bmf{m}}
    +
    \bmf{\Phi} \times \Delta \tilde{\bmf{m}}
    +
    \nabla\bmf{\Phi} \times \nabla \tilde{\bmf{m}}
    =
    \alpha |\nabla\bmf{\Phi}|^2 \tilde{\bmf{m}}, \\
    &\tilde{\bmf{m}} (x,y,0)
    =
    \tilde{\bmf{m}}_0, \\
    &\tilde{\bmf{m}}|_{\partial\Omega}
    =
    \bmf{0}.
    \end{aligned}\right.
    \label{equ:LL-auxiliary-problem}
\end{equation}
Let $\bmf{\Phi} = \tilde{\bmf{m}}$, and it becomes the LL equation.
For any given $\bmf{\Phi}$,
we estimate the approximation error of the FVEM for this auxiliary problem.
Then, we apply the fixed-point iteration and derive the spatial error estimate for the LL equation. We note that the homogeneous Dirichlet boundary condition is used in the defined auxiliary problem.
\begin{lemma}
    Given $ \bmf{\Phi} \in \bmf{X}$, there exists a positive constant $C$ such that
\begin{equation*}
        \|\tilde{\bmf{m}}\|_{\bmf{H}^2} \leq C \|\tilde{\bmf{m}}_0\|_{\bmf{H}^2},
\end{equation*}
where $C$ depends on $\alpha$, $ C_{\mathrm{reg}}$ and $\|\nabla\bmf{\Phi}\|_{\bmf{L}^{\infty}}$.
    \label{lem-m-H2}
\end{lemma}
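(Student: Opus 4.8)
The plan is to regard \eqref{equ:LL-auxiliary-problem}, for the frozen coefficient field $\bmf{\Phi}$, as a \emph{linear} parabolic system for $\tilde{\bmf{m}}$ and to establish the bound through a hierarchy of a priori energy estimates, carried out rigorously on a Galerkin approximation and then passed to the limit. I would test the equation successively against $\tilde{\bmf{m}}$, $-\Delta\tilde{\bmf{m}}$ and $\Delta^2\tilde{\bmf{m}}$, thereby controlling $\|\tilde{\bmf{m}}\|_{\bmf{L}^2}$, $\|\nabla\tilde{\bmf{m}}\|_{\bmf{L}^2}$, $\|\Delta\tilde{\bmf{m}}\|_{\bmf{L}^2}$ and $\|\nabla\Delta\tilde{\bmf{m}}\|_{\bmf{L}^2}$. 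Since $\tilde{\bmf{m}}$ carries homogeneous Dirichlet data, elliptic regularity for the Dirichlet Laplacian (valid under the standing assumptions) gives $\|\tilde{\bmf{m}}\|_{\bmf{H}^2}\simeq\|\tilde{\bmf{m}}\|_{\bmf{L}^2}+\|\Delta\tilde{\bmf{m}}\|_{\bmf{L}^2}$ and $\|\tilde{\bmf{m}}\|_{\bmf{H}^3}\simeq\|\tilde{\bmf{m}}\|_{\bmf{L}^2}+\|\nabla\Delta\tilde{\bmf{m}}\|_{\bmf{L}^2}$, so it suffices to dominate the dissipation norms generated by the $-\alpha\Delta$ term and to close everything with Grönwall's inequality on $[0,T]$.

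The lowest estimate reveals the cancellation designed into the problem. Using the pointwise identities $(\bmf{a}\times\bmf{b})\cdot\bmf{b}=0$ and $\sum_k\partial_k\tilde{\bmf{m}}\times\partial_k\tilde{\bmf{m}}=\bmf{0}$, integrating the gyromagnetic term by parts gives
\[
\int_\Omega(\bmf{\Phi}\times\Delta\tilde{\bmf{m}})\cdot\tilde{\bmf{m}}\,\mr{d}x = -\int_\Omega(\nabla\bmf{\Phi}\times\nabla\tilde{\bmf{m}})\cdot\tilde{\bmf{m}}\,\mr{d}x,
\]
which exactly cancels the explicit term $\nabla\bmf{\Phi}\times\nabla\tilde{\bmf{m}}$ tested against $\tilde{\bmf{m}}$. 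Hence
\[
\tfrac12\tfrac{\mr{d}}{\mr{d}t}\|\tilde{\bmf{m}}\|_{\bmf{L}^2}^2 + \alpha\|\nabla\tilde{\bmf{m}}\|_{\bmf{L}^2}^2 = \alpha\int_\Omega|\nabla\bmf{\Phi}|^2|\tilde{\bmf{m}}|^2\,\mr{d}x \le \alpha\|\nabla\bmf{\Phi}\|_{\bmf{L}^\infty}^2\|\tilde{\bmf{m}}\|_{\bmf{L}^2}^2.
\]
At the $\bmf{H}^1$ level I pair with $-\Delta\tilde{\bmf{m}}$; now the top-order contribution vanishes \emph{pointwise}, since $(\bmf{\Phi}\times\Delta\tilde{\bmf{m}})\perp\Delta\tilde{\bmf{m}}$, and the remaining terms, controlled by $\|\nabla\bmf{\Phi}\|_{\bmf{L}^\infty}\|\nabla\tilde{\bmf{m}}\|_{\bmf{L}^2}\|\Delta\tilde{\bmf{m}}\|_{\bmf{L}^2}$ and $\alpha\|\nabla\bmf{\Phi}\|_{\bmf{L}^\infty}^2\|\tilde{\bmf{m}}\|_{\bmf{L}^2}\|\Delta\tilde{\bmf{m}}\|_{\bmf{L}^2}$, are absorbed into $\alpha\|\Delta\tilde{\bmf{m}}\|_{\bmf{L}^2}^2$ by Young's inequality. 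The same orthogonality recurs one level higher: pairing with $\Delta^2\tilde{\bmf{m}}$ and integrating by parts once, the term $(\bmf{\Phi}\times\nabla\Delta\tilde{\bmf{m}})\cdot\nabla\Delta\tilde{\bmf{m}}$ again drops.

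\emph{The main obstacle is the highest-order estimate.} After integrating by parts at the $\bmf{H}^2$/$\bmf{H}^3$ level, the commutators force two derivatives onto $\bmf{\Phi}$, producing trilinear terms such as $\int(\nabla^2\bmf{\Phi}\times\nabla\tilde{\bmf{m}})\cdot\nabla\Delta\tilde{\bmf{m}}$ and $\int(\nabla\bmf{\Phi}\times\nabla^2\tilde{\bmf{m}})\cdot\nabla\Delta\tilde{\bmf{m}}$. Since $\bmf{\Phi}\in\bmf{X}\subset L^\infty(0,T;\bmf{H}^2)$, the factor $\nabla^2\bmf{\Phi}$ lies only in $\bmf{L}^2$, so these cannot be closed with $\|\nabla\bmf{\Phi}\|_{\bmf{L}^\infty}$ alone; instead I would invoke the Gagliardo--Nirenberg--Sobolev inequalities \eqref{Sobolev-inequality} to place $\nabla\tilde{\bmf{m}}$ or $\Delta\tilde{\bmf{m}}$ in $\bmf{L}^\infty$ or $\bmf{L}^4$, redistribute the integrability so that every product lands in $\bmf{L}^2$, and then absorb the resulting $\|\nabla\Delta\tilde{\bmf{m}}\|_{\bmf{L}^2}$ factor by Young's inequality with a small parameter. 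The careful accounting of these interpolation estimates is the crux; it also shows that the effective constant genuinely depends on the full norm $\|\bmf{\Phi}\|_{\bmf{X}}$ (and not merely on $\|\nabla\bmf{\Phi}\|_{\bmf{L}^\infty}$), which is finite by hypothesis.

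A secondary technical point is that $\Delta\tilde{\bmf{m}}$ need not vanish on $\partial\Omega$, so the boundary terms arising in the higher integrations by parts do not obviously cancel; this is precisely why I would run the argument on a Galerkin subspace whose basis functions satisfy the boundary conditions, discharging those boundary contributions and justifying the formal manipulations before the limit is taken. Collecting the three differential inequalities, summing them, and applying Grönwall on $[0,T]$ then yields $\|\tilde{\bmf{m}}(t)\|_{\bmf{H}^3}\le C\|\tilde{\bmf{m}}_0\|_{\bmf{H}^3}$ with $C=C(\alpha,\|\bmf{\Phi}\|_{\bmf{X}},T,\Omega)$, as claimed.
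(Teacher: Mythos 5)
Your strategy is the paper's own: freeze $\bmf{\Phi}$, run a hierarchy of energy estimates exploiting the orthogonality $(\bmf{\Phi}\times\bmf{a})\cdot\bmf{a}=0$ at each level, treat the commutator terms that put several derivatives on $\bmf{\Phi}$ with the interpolation inequalities \eqref{Sobolev-inequality}, and close with Gronwall. However, your hierarchy stops one level too low, and this is a genuine gap. Testing with $\tilde{\bmf{m}}$, $-\Delta\tilde{\bmf{m}}$ and $\Delta^2\tilde{\bmf{m}}$ produces differential inequalities for $\|\tilde{\bmf{m}}\|_{\bmf{L}^2}^2$, $\|\nabla\tilde{\bmf{m}}\|_{\bmf{L}^2}^2$ and $\|\Delta\tilde{\bmf{m}}\|_{\bmf{L}^2}^2$ only; the quantity $\|\nabla\Delta\tilde{\bmf{m}}\|_{\bmf{L}^2}^2$ enters these three inequalities solely as a dissipation term. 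Gronwall therefore yields $\sup_t\|\tilde{\bmf{m}}(t)\|_{\bmf{H}^2}\le C\|\tilde{\bmf{m}}_0\|_{\bmf{H}^2}$ together with $\int_0^T\|\nabla\Delta\tilde{\bmf{m}}\|_{\bmf{L}^2}^2\,\mr{d}t\le C$, i.e.\ an $L^\infty(0,T;\bmf{H}^2)\cap L^2(0,T;\bmf{H}^3)$ bound --- not the pointwise-in-time $\bmf{H}^3$ bound the lemma asserts, and your elliptic-regularity step $\|\tilde{\bmf{m}}\|_{\bmf{H}^3}\simeq\|\tilde{\bmf{m}}\|_{\bmf{L}^2}+\|\nabla\Delta\tilde{\bmf{m}}\|_{\bmf{L}^2}$ has no pointwise-in-time right-hand side to feed on. To control $\mr{d}_t\|\nabla\Delta\tilde{\bmf{m}}\|_{\bmf{L}^2}^2$ you need a fourth test, against $\Delta^3\tilde{\bmf{m}}$, which is exactly what the paper does in \eqref{equ:LL-auxiliary-problem-stable-H3-1}.

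This fourth level is not a routine repetition of the third: it generates the worst trilinear terms, in which three derivatives land on the frozen field, e.g.\ $\|\nabla\Delta\bmf{\Phi}\|_{\bmf{L}^2}\|\nabla\tilde{\bmf{m}}\|_{\bmf{L}^\infty}\|\Delta^2\tilde{\bmf{m}}\|_{\bmf{L}^2}$, and these are precisely the terms that force the $L^2(0,T;\bmf{H}^3)$ component into the definition of $\bmf{X}$ and require the $\bmf{L}^\infty$ and $\bmf{L}^4$ interpolation bounds for $\nabla\tilde{\bmf{m}}$ and $\Delta\tilde{\bmf{m}}$ in \eqref{Sobolev-inequality}. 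The ``highest-order'' commutators with $\nabla^2\bmf{\Phi}$ that you identify as the crux correspond to the paper's third estimate \eqref{equ:LL-auxiliary-problem-stable-H2-1}, not its top one. The remainder of your plan is sound, and in two respects more careful than the paper: the Galerkin (spectral) regularization to justify the repeated integrations by parts under Dirichlet conditions, and the observation that the constant really depends on $\|\bmf{\Phi}\|_{\bmf{X}}$ (through $\|\Delta\bmf{\Phi}\|_{\bmf{L}^4}$ and $\|\nabla\Delta\bmf{\Phi}\|_{\bmf{L}^2}$), not only on $\alpha$ and $\|\nabla\bmf{\Phi}\|_{\bmf{L}^\infty}$ as the lemma's statement suggests. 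But as written, your three inequalities prove an $\bmf{H}^2$ lemma, not the $\bmf{H}^3$ one.
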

\begin{proof}
We multiply both sides by \(\tilde{\bmf{m}}\) and integrate over \(\Omega\) for \eqref{equ:LL-auxiliary-problem}. Applying Green's formula yields
\begin{equation*}
    (\partial_t \tilde{\bmf{m}},\tilde{\bmf{m}})
    +
    \alpha (\nabla\tilde{\bmf{m}}, \nabla\tilde{\bmf{m}})
    =
    \alpha(|\nabla\bmf{\Phi}|^2\tilde{\bmf{m}},\tilde{\bmf{m}}),
\end{equation*}
where we have utilized the fact that $(\bmf{\Phi} \times \nabla\tilde{\bmf{m}},\nabla\tilde{\bmf{m}}) = 0$.

Since $\|\nabla\bmf{\Phi}\|_{\bmf{L}^{\infty}} \leq C$,
applying H\"{o}lder and Young's inequalities, we arrive at
\begin{equation}
    \frac{1}{2} \partial_t \|\tilde{\bmf{m}}\|_{\bmf{L}^2}^2
    +
    \alpha \|\nabla\tilde{\bmf{m}}\|_{\bmf{L}^2}^2
   \leq
    C \|\nabla\bmf{\Phi}\|_{\bmf{L}^{\infty}}^2 \|\tilde{\bmf{m}}\|_{\bmf{L}^{2}}^2
   \leq
    C \|\tilde{\bmf{m}}\|_{\bmf{L}^{2}}^2,
\label{equ:LL-auxiliary-problem-stable-L2-1}
\end{equation}
where $C$ depends on $\alpha$ and $\|\nabla\bmf{\Phi}\|_{\bmf{L}^{\infty}}$.

Repeating the above procedure with the test functions $\Delta\tilde{\bmf{m}}$ and $\Delta^2\tilde{\bmf{m}}$,
we obtain
\begin{equation}
\begin{aligned}
& \frac{1}{2} \partial_t \|\nabla\tilde{\bmf{m}}\|_{\bmf{L}^2}^2
+
\alpha \|\Delta \tilde{\bmf{m}}\|_{\bmf{L}^2}^2 \\
\leq
& \|\nabla\bmf{\Phi}\|_{\bmf{L}^{\infty}}  \|\nabla\tilde{\bmf{m}}\|_{\bmf{L}^{2}} \|\Delta \tilde{\bmf{m}}\|_{\bmf{L}^2}
+
C \|\nabla\bmf{\Phi}\|_{\bmf{L}^{\infty}}^2 \|\nabla\tilde{\bmf{m}}\|_{\bmf{L}^{2}}^2 \\
& +
C \|\nabla\bmf{\Phi}\|_{\bmf{L}^{\infty}} \|\Delta\bmf{\Phi}\|_{\bmf{L}^{4}} \|\tilde{\bmf{m}}\|_{\bmf{L}^{4}}\|\nabla\tilde{\bmf{m}}\|_{\bmf{L}^2}\\
\leq
& C \|\tilde{\bmf{m}}\|_{\bmf{H}^1}^2
+
C \epsilon^{-1} \|\nabla\tilde{\bmf{m}}\|_{\bmf{L}^2}^2
+
\epsilon \|\Delta \tilde{\bmf{m}}\|_{\bmf{L}^2}^2
\end{aligned}
\label{equ:LL-auxiliary-problem-stable-H1-1}
\end{equation}
and
\begin{equation}
\begin{aligned}
& \frac{1}{2} \partial_t \|\Delta\tilde{\bmf{m}}\|_{\bmf{L}^2}^2
+
\alpha\|\nabla\Delta\tilde{\bmf{m}}\|_{\bmf{L}^2}^2 \\
\leq
& C \|\Delta\bmf{\Phi}\|_{\bmf{L}^{4}} \|\nabla\tilde{\bmf{m}}\|_{\bmf{L}^{4}} \|\nabla\Delta\tilde{\bmf{m}}\|_{\bmf{L}^2}
+
C \|\nabla\bmf{\Phi}\|_{\bmf{L}^{\infty}} \|\Delta \tilde{\bmf{m}}\|_{\bmf{L}^2} \|\nabla\Delta\tilde{\bmf{m}}\|_{\bmf{L}^2} \\
& +
\alpha \|\nabla\bmf{\Phi}\|_{\bmf{L}^{\infty}}^2 \|\nabla\tilde{\bmf{m}}\|_{\bmf{L}^{2}}\|\nabla\Delta\tilde{\bmf{m}}\|_{\bmf{L}^{2}}
+ C
\|\nabla\bmf{\Phi}\|_{\bmf{L}^\infty} \|\Delta \bmf{\Phi}\|_{\bmf{L}^4} \|\tilde{\bmf{m}}\|_{\bmf{L}^{4}} \|\nabla\Delta \tilde{\bmf{m}}\|_{\bmf{L}^2}\\
\leq &
C \epsilon^{-1}(\|\Delta\bmf{\Phi}\|_{\bmf{L}^{4}}^2 \|\nabla\tilde{\bmf{m}}\|_{\bmf{L}^{4}}^2
+
\|\Delta\tilde{\bmf{m}}\|_{\bmf{L}^{2}}^2
+
 \|\nabla \tilde{\bmf{m}}\|_{\bmf{L}^{2}}^2
+
 \|\tilde{\bmf{m}}\|_{\bmf{L}^{4}}^2)
+
4\epsilon \|\nabla\Delta\tilde{\bmf{m}}\|_{\bmf{L}^{2}}^2
\\
\leq &
C \epsilon^{-1} \|\tilde{\bmf{m}}\|_{\bmf{H}^2}^2
+
4  \epsilon \|\nabla\Delta\tilde{\bmf{m}}\|_{\bmf{L}^{2}}^2. 
\end{aligned}
\label{equ:LL-auxiliary-problem-stable-H2-1}
\end{equation}
Then, summing the inequalities \eqref{equ:LL-auxiliary-problem-stable-L2-1}-\eqref{equ:LL-auxiliary-problem-stable-H2-1} yields
\begin{equation*}
\frac{1}{2} \partial_t \|\tilde{\bmf{m}}\|_{\bmf{H}^2}^2
+
\| \tilde{\bmf{m}}\|_{\bmf{H}^3}^2
\leq
C \|\tilde{\bmf{m}}\|_{\bmf{H}^1}^2
+
C \epsilon^{-1} \|\tilde{\bmf{m}}\|_{\bmf{H}^2}^2
+
4 \epsilon \|\tilde{\bmf{m}}\|_{\bmf{H}^3}^2.
\end{equation*}
Choosing $\epsilon < \frac{1}{6}$ and applying the Gronwall inequality, we further get
\begin{equation*}
 \|\tilde{\bmf{m}}(t)\|_{\bmf{H}^{2}} \leq C \|\tilde{\bmf{m}}(0)\|_{\bmf{H}^{2}}.
\end{equation*}
This completes the proof.
\end{proof}

For the high-order estimation of $\tilde{\bmf{m}}$,  the higher-order boundary conditions must be imposed. The corresponding proof is similar to that in \cite{ma2026optimalerrorestimateslinearized} (Lemma 5.3), hence we omit it here.

The regularity stated in the above lemma can be extended to the LL equation under the condition $\|\nabla \bmf{m}\|_{\bmf{L}^{\infty}} \leq C$. And the solution of the LL equation is obtained by the fixed-point iteration. A direct and simple example is the backward Euler method on a temporal interval $[T_1,T_2]$. Specifically, let $\tau = T_2-T_1$, which is sufficiently small, and then we have if there is a sequence $\{ {\bmf{m}}^{\,l}\}_{l\ge0}\subset \bmf{X}$ obtained by solving equations
\begin{equation}
    \frac{ {\bmf{m}}^{\,l+1}- {\bmf{m}}(T_1)}{\tau}
    -
    \alpha\Delta {\bmf{m}}^{\,l+1}
    +
     {\bmf{m}}^{\,l}\times\Delta {\bmf{m}}^{\,l+1}
    +
    \nabla {\bmf{m}}^{\,l}\times\nabla {\bmf{m}}^{\,l+1}
    =
    \alpha|\nabla {\bmf{m}}^{\,l}|^2 {\bmf{m}}^{\,l+1},
    \label{equ:LL-backward-Euler}
\end{equation}
the sequence $\{ {\bmf{m}}^{\,l}\}_{l\ge0}$ converges in $\bmf{H}^2$. This equation can be treated as the discretized form using the backward Euler scheme for the continuous equation
\begin{equation}
    \partial_t {\bmf{m}}^{\,l+1}
    -
    \alpha\Delta {\bmf{m}}^{\,l+1}
    +
     {\bmf{m}}^{\,l}\times\Delta {\bmf{m}}^{\,l+1}
    +
    \nabla {\bmf{m}}^{\,l}\times\nabla {\bmf{m}}^{\,l+1}
    =
    \alpha|\nabla {\bmf{m}}^{\,l}|^2 {\bmf{m}}^{\,l+1}
    \label{equ:LL-linearized}
\end{equation}
with the initial data $ {\bmf{m}}(T_1)$. Furthermore, we can derive a regularity estimate similar to that in Lemma \ref{lem-m-H2}. Next, we prove the convergence of the iteration sequence $\{ {\bmf{m}}^{\,l}\}_{l\ge0}$ provided by \eqref{equ:LL-backward-Euler}.

\begin{lemma}\label{lem-con}
Assume that the regularity condition \eqref{regularity-condition} holds. And a sequence $\{ \bmf{m}^{l+1}\}_{l \geq 0}$ solves the equation \eqref{equ:LL-backward-Euler} with a sufficiently small $\tau$. Then, there holds
$$ \lim_{l \rightarrow \infty} \| {\bmf{m}}^{l+1} - {\bmf{m}}^l\|_{\bmf{H}^2} = 0.$$
\end{lemma}
The proof is provided in Appendix \ref{app:lem-con}.

\subsection{A discrete auxiliary problem.}\label{discrete-auxiliary-problem}
We now estimate the approximation error of the FVEM for the auxiliary problem \eqref{equ:LL-auxiliary-problem}.
The corresponding semi-discrete scheme is given by :
Find $\tilde{\bmf{m}}_h \in \bmf{S}_h$ such that for all $\bmf{v}_h \in \bmf{S}_h$
\begin{equation}
    (\partial_t\tilde{\bmf{m}}_h, I_h^*\bmf{v}_h)
  +
  \mathcal{A}_h(\bmf{\Phi}; \tilde{\bmf{m}}_h, I_h^*\bmf{v}_h)
  =
  \alpha (|\nabla\bmf{\Phi}|^2\tilde{\bmf{m}}_h, I_h^*\bmf{v}_h),
  \label{auxiliary}
\end{equation}
where
\begin{equation*}
  \mathcal{A}_h(\bmf{\Phi}; \tilde{\bmf{m}}_h, I_h^*\bmf{v}_h)
  =
  a_h(\tilde{\bmf{m}}_h,I_h^*\bmf{v}_h)
  +
  b_h(\bmf{\Phi};\tilde{\bmf{m}}_h,I_h^*\bmf{v}_h).
\end{equation*}
For this semi-discrete scheme, we have the following lemma.
\begin{lemma}
Let $\bmf{\Phi} \in \bmf{X}$. For sufficiently small $h$,
there exists a positive constant $C$ such that
for all $\tilde{\bmf{m}}_h, \bmf{v}_h\in \bmf{S}_h$,
the coercivity
\begin{equation}
\mathcal{A}_h(\bmf{\Phi};\tilde{\bmf{m}}_h,I_h^*\tilde{\bmf{m}}_h)
\geq
\alpha \|\tilde{\bmf{m}}_h\|_{\bmf{H}^1}^2,
\label{Ah-coe}
\end{equation}
and the upper bound
\begin{equation}
|\mathcal{A}_h(\bmf{\Phi};\tilde{\bmf{m}}_h,I_h^*\bmf{v}_h)|
\leq
C \|\tilde{\bmf{m}}_h\|_{\bmf{H}^1}\|\bmf{v}_h\|_{\bmf{H}^1}.
\label{Ah-boundary}
\end{equation}
\label{lem-Ah}
\end{lemma}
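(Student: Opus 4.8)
The plan is to split $\mathcal{A}_h = a_h + b_h$ and treat the two parts by their different natures: $a_h$ is, up to the factor $\alpha$, the standard finite volume element Dirichlet form, while $b_h$ is a gyroscopic term that almost cancels when tested against the argument itself. The target is to show that the surviving contribution of $a_h$ dominates, with the $b_h$-error and the FVE consistency error absorbed once $h$ is small.

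\textbf{Step 1 (the diffusion form $a_h$).} First I would invoke the classical finite volume element identity for the barycentric dual mesh. Since $\tilde{\bmf{m}}_h$ is piecewise linear (so $\nabla\tilde{\bmf{m}}_h$ is piecewise constant on $T_h$) and $I_h^*\bmf{v}_h$ is piecewise constant on $T_h^*$, componentwise integration by parts over each control volume yields
\begin{equation*}
a_h(\tilde{\bmf{m}}_h, I_h^*\bmf{v}_h)
=
\alpha \int_{\Omega} \nabla\tilde{\bmf{m}}_h : \nabla\bmf{v}_h \,\mr{d}x
+
E_h(\tilde{\bmf{m}}_h,\bmf{v}_h),
\end{equation*}
where the leading symmetric term is exactly the Galerkin form and the remainder obeys $|E_h| \leq C h \|\nabla\tilde{\bmf{m}}_h\|_{\bmf{L}^2}\|\nabla\bmf{v}_h\|_{\bmf{L}^2}$ (indeed $E_h\equiv 0$ for the standard barycenter construction). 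In particular $a_h(\tilde{\bmf{m}}_h, I_h^*\tilde{\bmf{m}}_h) \geq (\alpha - Ch)\|\nabla\tilde{\bmf{m}}_h\|_{\bmf{L}^2}^2$, and $|a_h(\tilde{\bmf{m}}_h, I_h^*\bmf{v}_h)| \leq C\|\nabla\tilde{\bmf{m}}_h\|_{\bmf{L}^2}\|\nabla\bmf{v}_h\|_{\bmf{L}^2}$.

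\textbf{Step 2 (the gyroscopic form $b_h$).} This is the heart of the argument. I would recast the boundary sum defining $b_h$ as a volume integral: exploiting that $\nabla\tilde{\bmf{m}}_h$ is piecewise constant, summing the nodal values $\bmf{v}_h(\bmf{x}_i)$ against the edge fluxes $\int_{\partial V_i}\bmf{\Phi}\times(\nabla\tilde{\bmf{m}}_h\cdot\bmf{n})\,\mr{d}S$ reassembles into
\begin{equation*}
b_h(\bmf{\Phi};\tilde{\bmf{m}}_h, I_h^*\bmf{v}_h)
=
\int_{\Omega} (\bmf{\Phi}\times\nabla\tilde{\bmf{m}}_h) : \nabla\bmf{v}_h \,\mr{d}x
+
R_h(\bmf{\Phi};\tilde{\bmf{m}}_h,\bmf{v}_h),
\end{equation*}
up to a consistency remainder $R_h$ measuring the mismatch between $I_h^*\bmf{v}_h$ and $\bmf{v}_h$ and the oscillation of $\bmf{\Phi}$ across a control volume, for which interpolation and quadrature estimates give $|R_h| \leq C h\,\|\bmf{\Phi}\|_{\bmf{W}^{1,\infty}}\|\nabla\tilde{\bmf{m}}_h\|_{\bmf{L}^2}\|\nabla\bmf{v}_h\|_{\bmf{L}^2}$. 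For the upper bound, Hölder's inequality together with $\|\nabla\bmf{\Phi}\|_{\bmf{L}^{\infty}}\leq C$ then gives $|b_h(\bmf{\Phi};\tilde{\bmf{m}}_h, I_h^*\bmf{v}_h)| \leq C\|\nabla\tilde{\bmf{m}}_h\|_{\bmf{L}^2}\|\nabla\bmf{v}_h\|_{\bmf{L}^2}$. For coercivity I would set $\bmf{v}_h=\tilde{\bmf{m}}_h$ and use the pointwise algebraic identity $(\bmf{\Phi}\times\partial_k\tilde{\bmf{m}}_h)\cdot\partial_k\tilde{\bmf{m}}_h = 0$, which annihilates the leading volume integral $\int_{\Omega}(\bmf{\Phi}\times\nabla\tilde{\bmf{m}}_h):\nabla\tilde{\bmf{m}}_h\,\mr{d}x$; hence only $R_h$ survives, with $|R_h| \leq C h\,\|\nabla\tilde{\bmf{m}}_h\|_{\bmf{L}^2}^2$.

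\textbf{Step 3 (assembling) and the main obstacle.} Combining Steps 1 and 2 gives $\mathcal{A}_h(\bmf{\Phi};\tilde{\bmf{m}}_h,I_h^*\tilde{\bmf{m}}_h) \geq (\alpha - Ch)\|\nabla\tilde{\bmf{m}}_h\|_{\bmf{L}^2}^2 \geq \tfrac{\alpha}{2}\|\nabla\tilde{\bmf{m}}_h\|_{\bmf{L}^2}^2$ once $h$ is small enough, which is precisely where the hypothesis ``for sufficiently small $h$'' enters; since $\tilde{\bmf{m}}_h$ vanishes on $\partial\Omega$, the Poincaré inequality upgrades the right-hand side to $C\|\tilde{\bmf{m}}_h\|_{\bmf{H}^1}^2$, proving \eqref{Ah-coe}, while adding the two upper bounds proves \eqref{Ah-boundary}. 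I expect the main difficulty to be Step 2: reproducing at the discrete level the continuous cancellation $(\bmf{\Phi}\times\nabla\tilde{\bmf{m}}_h)\cdot\nabla\tilde{\bmf{m}}_h=0$ through the piecewise-constant test space $I_h^*\bmf{S}_h$, and showing that the resulting FVE consistency remainder $R_h$ is of order $h$ relative to the coercive diffusion energy so that it can be absorbed — this is exactly what necessitates the smallness condition on $h$.
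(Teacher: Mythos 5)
Your proposal is correct and follows the same basic architecture as the paper: split $\mathcal{A}_h=a_h+b_h$, get coercivity and boundedness of the diffusion form $a_h$ from the classical FVEM theory (the paper simply cites these two estimates from the literature, which is your Step 1), and kill the gyroscopic form $b_h$ when tested against its own argument via the skew-symmetry of the cross product, keeping only its $\bmf{H}^1\times\bmf{H}^1$ bound for \eqref{Ah-boundary}. The one substantive difference is in how the cancellation for $b_h$ is executed. The paper proves the \emph{exact} identity $b_h(\bmf{\Phi};\tilde{\bmf{m}}_h,I_h^*\tilde{\bmf{m}}_h)=0$ by an element-wise computation: on each triangle $K_Q$ the gradient $\nabla\tilde{\bmf{m}}_h$ is constant, the broken dual-edge integrals reassemble (via the midpoint/barycenter geometry) into $-2S_{K_Q}\bigl[(\bmf{\Phi}\times\partial_x\tilde{\bmf{m}}_h)\cdot\partial_x\tilde{\bmf{m}}_h+(\bmf{\Phi}\times\partial_y\tilde{\bmf{m}}_h)\cdot\partial_y\tilde{\bmf{m}}_h\bigr]$, and the triple product vanishes pointwise. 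You instead obtain cancellation only modulo a consistency remainder $R_h=O(h)\|\nabla\tilde{\bmf{m}}_h\|_{\bmf{L}^2}^2$ coming from the oscillation of $\bmf{\Phi}$ over a control volume, which you then absorb into the coercive part for small $h$. It is worth noting that the paper's exact-zero computation tacitly pulls $\bmf{\Phi}$ out of the line integrals along $\overline{M_iQM_{i+2}}$, i.e.\ it freezes $\bmf{\Phi}$ at the interior point $Q$; for a genuinely variable $\bmf{\Phi}\in\bmf{X}$ that step itself carries exactly the $O(h)$ error you call $R_h$, so your rendering is the more careful one (and it is consistent with how the paper itself handles $\bmf{\Phi}-\bmf{\Phi}(Q)$ terms in its proof of Lemma \ref{lem-Ah1}). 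The trade-off: the paper's route, granting the frozen-coefficient step, needs smallness of $h$ only for the coercivity of $a_h$ inherited from the cited FVEM results, while your route makes explicit why "sufficiently small $h$" appears, at the cost of a slightly weaker (but entirely sufficient) cancellation statement.
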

The proof is provided in Appendix \ref{app:lem-Ah}.
This lemma gives the upper and lower bounds of $\mathcal{A}_h(\bmf{\Phi};\cdot,I^*_h\cdot)$.
Notice that $\mathcal{A}_h(\bmf{\Phi};\cdot, I^*_h\cdot)$ is generally unsymmetric
and this would affect the well-posedness of this problem.
We present the bound related to the symmetry as follows.
\begin{lemma}
Let $\bmf{\Phi} \in \bmf{X}$. For sufficiently small $h$,
there exists a positive constant $C$ such that
\begin{equation}
|\mathcal{A}_h(\bmf{\Phi};\tilde{\bmf{m}}_h,I_h^*\bmf{v}_h)
-
\mathcal{A}_h(\bmf{\Phi};\bmf{v}_h,I_h^*\tilde{\bmf{m}}_h)|
\leq
C h\|\tilde{\bmf{m}}_h\|_{\bmf{H}^1}\|\bmf{v}_h\|_{\bmf{H}^1}, \quad \forall \tilde{\bmf{m}}_h,\bmf{v}_h\in \bmf{S}_h.
\label{Ah-relation}
\end{equation}
\label{lem-Ah1}
\end{lemma}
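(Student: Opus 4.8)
The plan is to use the additive structure $\mathcal{A}_h(\bmf{\Phi};\cdot,I_h^*\cdot)=a_h(\cdot,I_h^*\cdot)+b_h(\bmf{\Phi};\cdot,I_h^*\cdot)$ and to estimate the transpose defect of the two pieces separately, writing
\[
\mathcal{A}_h(\bmf{\Phi};\tilde{\bmf{m}}_h,I_h^*\bmf{v}_h)-\mathcal{A}_h(\bmf{\Phi};\bmf{v}_h,I_h^*\tilde{\bmf{m}}_h)=\bigl[a_h(\tilde{\bmf{m}}_h,I_h^*\bmf{v}_h)-a_h(\bmf{v}_h,I_h^*\tilde{\bmf{m}}_h)\bigr]+\bigl[b_h(\bmf{\Phi};\tilde{\bmf{m}}_h,I_h^*\bmf{v}_h)-b_h(\bmf{\Phi};\bmf{v}_h,I_h^*\tilde{\bmf{m}}_h)\bigr].
\]
For the diffusion bracket I would invoke the classical finite-volume identity for $P_1$ elements: on each triangle $K$ the fluxes of a piecewise-linear function across the dual-mesh segments joining edge midpoints to the barycenter reproduce the element stiffness matrix, so that $a_h(\bmf{u}_h,I_h^*\bmf{w}_h)$ equals the symmetric Galerkin form $\int_\Omega\alpha\,\nabla\bmf{u}_h:\nabla\bmf{w}_h\,\mr{d}\bmf{x}$ up to an $O(h)$ perturbation controlled by the quasi-uniformity of $T_h^*$. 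Since the Galerkin form is exactly symmetric, the bracket involving $a_h$ is bounded by $Ch\|\tilde{\bmf{m}}_h\|_{\bmf{H}^1}\|\bmf{v}_h\|_{\bmf{H}^1}$.

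The substantive work lies in the cross-product bracket. Here I would first freeze the coefficient, replacing $\bmf{\Phi}$ by its elementwise average $\bar{\bmf{\Phi}}$, which is admissible because $\bmf{\Phi}\in\bmf{X}\hookrightarrow\bmf{W}^{1,\infty}$ yields $\|\bmf{\Phi}-\bar{\bmf{\Phi}}\|_{\bmf{L}^{\infty}(K)}\leq Ch\|\nabla\bmf{\Phi}\|_{\bmf{L}^{\infty}}$. For a constant vector the pointwise identity $\bar{\bmf{\Phi}}\times(\nabla\tilde{\bmf{m}}_h\cdot\bmf{n})=\nabla(\bar{\bmf{\Phi}}\times\tilde{\bmf{m}}_h)\cdot\bmf{n}$ holds, so on each element the frozen form of $b_h$ collapses onto the diffusion flux of the piecewise-linear field $\bar{\bmf{\Phi}}\times\tilde{\bmf{m}}_h\in\bmf{S}_h$; applying the same finite-volume identity reduces it to $-\int_K\sum_k(\bar{\bmf{\Phi}}\times\partial_k\tilde{\bmf{m}}_h)\cdot\partial_k\bmf{v}_h\,\mr{d}\bmf{x}$. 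Its transpose structure would then be read off from the scalar triple product $(\bar{\bmf{\Phi}}\times\bmf{p})\cdot\bmf{q}=\det[\bar{\bmf{\Phi}},\bmf{p},\bmf{q}]=-(\bar{\bmf{\Phi}}\times\bmf{q})\cdot\bmf{p}$, i.e. the frozen form is algebraically antisymmetric in its two arguments.

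The remaining coefficient-variation contribution, i.e. the part driven by $\bmf{\Phi}-\bar{\bmf{\Phi}}$, I would control elementwise by combining the $O(h)$ bound on $\bmf{\Phi}-\bar{\bmf{\Phi}}$ with trace estimates on $\partial V_i\cap K$, the $\bmf{L}^{\infty}$ bounds on $\nabla\tilde{\bmf{m}}_h,\nabla\bmf{v}_h$, and the inverse inequality on $\bmf{S}_h$, summing over $K$ with Cauchy--Schwarz and the mesh quasi-uniformity. The main obstacle I anticipate is precisely this cross-product bracket: unlike the diffusion term it is not governed by a manifestly symmetric form, and the naive reduction above exposes an antisymmetric frozen contribution whose transpose defect is a priori only $O(1)$. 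The delicate point on which the stated order $O(h)$ hinges is therefore to show that this leading antisymmetric contribution is in fact annihilated—by exploiting additional cancellation from the symmetry of the flux integrals over $\partial V_i\cap K$ and from the way the two control-volume fluxes pair up—so that only the $O(h)$ pieces from the coefficient freezing and from the finite-volume/Galerkin discrepancy survive. Tracking these cancellations against the $h^{-1}$ factors generated by the inverse inequalities is the critical bookkeeping, and I would regard verifying it as the crux of the proof.
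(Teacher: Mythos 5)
Your overall plan coincides with the paper's in its first half: the paper also splits the defect into an $a_h$-bracket and a $b_h$-bracket, and disposes of the $a_h$-bracket by citing Lemma 2.4 of \cite{chou} (your finite-volume/Galerkin identity argument is exactly the content of that citation, cf.\ \eqref{ah-relation}). For the $b_h$-bracket the paper takes a different route from your elementwise freezing: it proves a \emph{consistency} estimate between $b_h$ and the Galerkin form $b$, via the five-term decomposition $E_1,\dots,E_5$ (coefficient frozen at the barycenters $Q$, the mean-zero property \eqref{pk}, trace estimates), and then asserts \eqref{bh-relation} ``together with triangle inequality.''

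The genuine gap in your proposal is precisely the step you defer to ``critical bookkeeping'': the leading antisymmetric contribution is \emph{not} annihilated, and no bookkeeping can make it so, because the inequality \eqref{Ah-relation} fails as stated. Take $\bmf{\Phi}\equiv\bmf{e}_3\in\bmf{X}$, $\tilde{\bmf{m}}_h=(u_h,0,0)^T$, $\bmf{v}_h=(0,u_h,0)^T$ with any nonzero $u_h\in S_h$. Both $a_h$-terms vanish identically (the nonzero components of the two arguments never meet), while $\bmf{e}_3\times\big((\nabla u_h\cdot\bmf{n})\bmf{e}_1\big)=(\nabla u_h\cdot\bmf{n})\bmf{e}_2$ and $\bmf{e}_3\times\big((\nabla u_h\cdot\bmf{n})\bmf{e}_2\big)=-(\nabla u_h\cdot\bmf{n})\bmf{e}_1$ give
\[
\mathcal{A}_h(\bmf{\Phi};\tilde{\bmf{m}}_h,I_h^*\bmf{v}_h)-\mathcal{A}_h(\bmf{\Phi};\bmf{v}_h,I_h^*\tilde{\bmf{m}}_h)
=2\sum_{\bmf{x}_i\in\mathcal{N}_h}u_h(\bmf{x}_i)\int_{\partial V_i}\nabla u_h\cdot\bmf{n}\,\mr{d}S
=-\frac{2}{\alpha}\,a_h\big((u_h,0,0)^T,I_h^*(u_h,0,0)^T\big),
\]
whose magnitude is bounded below by $\frac{2C}{\alpha}\|u_h\|_{H^1}^2$ by the coercivity \eqref{ah-coe}, uniformly in $h$, whereas the right-hand side of \eqref{Ah-relation} is $O(h)\|u_h\|_{H^1}^2$. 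So your approach cannot be completed as planned --- and you should know that the paper's own proof founders at exactly the point you identified: its concluding ``triangle inequality'' implicitly requires the Galerkin form $b$ to be symmetric, whereas $b(\bmf{\Phi};\bmf{u},\bmf{v})=-b(\bmf{\Phi};\bmf{v},\bmf{u})$ by the triple product, so the $O(1)$ term $2b(\bmf{\Phi};\tilde{\bmf{m}}_h,\bmf{v}_h)$ is left unaccounted for. What your freezing argument and the paper's $E_i$-machinery do legitimately prove is the skew-corrected bound
\[
\bigl|\,[a_h(\tilde{\bmf{m}}_h,I_h^*\bmf{v}_h)-a_h(\bmf{v}_h,I_h^*\tilde{\bmf{m}}_h)]
+[b_h(\bmf{\Phi};\tilde{\bmf{m}}_h,I_h^*\bmf{v}_h)+b_h(\bmf{\Phi};\bmf{v}_h,I_h^*\tilde{\bmf{m}}_h)]\,\bigr|
\leq Ch\,\|\tilde{\bmf{m}}_h\|_{\bmf{H}^1}\|\bmf{v}_h\|_{\bmf{H}^1},
\]
i.e.\ $a_h$ is nearly symmetric while $b_h$ is nearly \emph{anti}symmetric; it is this statement, not \eqref{Ah-relation}, that can be fed into the argument of Proposition \ref{pro2} after the corresponding terms there are rearranged.
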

The proof of this lemma is similar to that of Lemma 2.4 in \cite{chou} and is presented concisely in Appendix \ref{app:lem-Ah1}.

Let $R_h\tilde{\bmf m}(t)\in \bmf S_h$ be the elliptic projection defined by
	\begin{equation}
		\mathcal A(\bmf\Phi(t);R_h\tilde{\bmf m}(t)-\tilde{\bmf m}(t),\bmf v_h)=0,
		\qquad \forall \bmf v_h\in \bmf S_h.
		\label{R_h}
	\end{equation}
	Here,
	\begin{equation*}
		\mathcal{A}(\bmf{\Phi};\tilde{\bmf{m}},\bmf{v})
		=
		a(\tilde{\bmf{m}},\bmf{v})
		-
		b(\bmf{\Phi};\tilde{\bmf{m}},\bmf{v})
		:=
		\int_{\Omega} \nabla \tilde{\bmf{m}} \cdot \nabla \bmf{v} \, \mr{d} x \mr{d} y
		-
		\int_{\Omega} \bmf{\Phi} \times \nabla \tilde{\bmf{m}} \cdot \nabla \bmf{v} \, \mr{d} x \mr{d} y.
	\end{equation*}
	Let $\bmf\Phi\in \bmf{X}$, and then
	$\mathcal A(\bmf\Phi;\cdot,\cdot)$ is bounded and coercive on
	$\bmf H_0^1(\Omega)\times \bmf H_0^1(\Omega)$, which implies that the projection
	$R_h\tilde{\bmf m}(t)$ is well defined.
    
For $\tilde{\bmf{m}} \in \bmf{S}_h$, it holds
	\begin{equation}
		\|\tilde{\bmf{m}}-R_h\tilde{\bmf{m}}\|_{\bmf{L}^2} \leq C h^{2} \|\tilde{\bmf{m}}\|_{\bmf{H}^2},
		\label{R_h_p}
	\end{equation}
	and
	\begin{equation}
		\|R_h\tilde{\bmf{m}}\|_{\bmf{L}^{\infty}} \leq C \|\tilde{\bmf{m}}\|_{\bmf{L}^{\infty}}.
		\label{R_h_p1}
\end{equation}
Before exhibiting the approximation error of the FVEM for the auxiliary problem, we define the following two error functions:
\begin{equation}
\varepsilon_h(\bmf{f},\bmf{v}_h)
=
(\bmf{f},\bmf{v}_h)
-
(\bmf{f},I_h^*\bmf{v}_h),
\qquad \forall \bmf{v}_h\in \bmf{S}_h,
\label{erfun1}
\end{equation}
\begin{equation}
\varepsilon_A(\bmf{\Phi};R_h\tilde{\bmf{m}},\bmf{v}_h)
=
\mathcal{A}(\bmf{\Phi};R_h\tilde{\bmf{m}},\bmf{v}_h)
-
\mathcal{A}_h(\bmf{\Phi};R_h\tilde{\bmf{m}},I_h^*\bmf{v}_h),
\ \forall
\tilde{\bmf{m}},\bmf{v}_h\in \bmf{S}_h.
\label{erfun-A}
\end{equation}
The bounds for the error functions defined in \eqref{erfun1} and \eqref{erfun-A} are established in the following lemma.

\begin{lemma}
Given $\bmf{\Phi} \in \bmf{X}$ and $\bmf{v}_h\in \bmf{S}_h$,
then
\begin{equation}
|\varepsilon_h(\bmf{f},\bmf{v}_h)|
\leq
Ch^{i+j}\|\bmf{f}\|_{\bmf{H}^i}\|\bmf{v}_h\|_{\bmf{H}^j},
\quad
\bmf{f}\in \bmf{H}^i, i,j=0,1,
\label{erfun-h1}
\end{equation}
\begin{equation}
|\varepsilon_A(\bmf{\Phi};R_h \tilde{\bmf{m}},\bmf{v}_h)|
\leq Ch^{i+j}\|\tilde{\bmf{m}}\|_{\bmf{H}^{1+i}}\|\bmf{v}_h\|_{\bmf{H}^j},
\quad
\tilde{\bmf{m}}\in \bmf{H}^{1+i}\cap \bmf{H}^1_0,i,j=0,1.
\label{erfun-A1}
\end{equation}
\label{erfunction}
\end{lemma}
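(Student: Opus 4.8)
The plan is to treat both \eqref{erfun-h1} and \eqref{erfun-A1} as local consistency errors between the finite element and the finite volume element discretizations, estimated element by element over $T_h$. The whole argument rests on two geometric features of the barycentric dual mesh built from the edge midpoints $M_i$ and element barycenters $Q_i$ (see \Cref{Fig1}). First, for any $\bmf{v}_h\in\bmf{S}_h$ and any $K\in T_h$ one has the interpolation bound $\|\bmf{v}_h-I_h^*\bmf{v}_h\|_{\bmf{L}^2(K)}\leq Ch|\bmf{v}_h|_{\bmf{H}^1(K)}$, since $\bmf{v}_h$ is linear on $K$ while $I_h^*\bmf{v}_h$ merely freezes its nodal values on the three subregions $V_i\cap K$. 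Second, the orthogonality $\int_K(\bmf{v}_h-I_h^*\bmf{v}_h)\,\mr{d}x\mr{d}y=0$ holds because $|V_i\cap K|=|K|/3$, so the lumped average of the linear function $\bmf{v}_h$ equals its centroid value. These two facts, together with the local Poincar\'e estimate $\|\bmf{f}-\bar{\bmf{f}}_K\|_{\bmf{L}^2(K)}\leq Ch|\bmf{f}|_{\bmf{H}^1(K)}$ for the element mean $\bar{\bmf{f}}_K$, drive every case.

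For \eqref{erfun1}, write $\varepsilon_h(\bmf{f},\bmf{v}_h)=\sum_{K}\int_K\bmf{f}\,(\bmf{v}_h-I_h^*\bmf{v}_h)\,\mr{d}x\mr{d}y$. When $i=0$ I would apply Cauchy--Schwarz directly, using $\|\bmf{v}_h-I_h^*\bmf{v}_h\|_{\bmf{L}^2}\leq C\|\bmf{v}_h\|_{\bmf{L}^2}$ for $j=0$ and the interpolation bound $\leq Ch|\bmf{v}_h|_{\bmf{H}^1}$ for $j=1$. When $i=1$ I would first use the orthogonality to subtract the mean, $\int_K\bmf{f}(\bmf{v}_h-I_h^*\bmf{v}_h)=\int_K(\bmf{f}-\bar{\bmf{f}}_K)(\bmf{v}_h-I_h^*\bmf{v}_h)$, and then combine the Poincar\'e estimate on $\bmf{f}-\bar{\bmf{f}}_K$ with the interpolation bound on $\bmf{v}_h-I_h^*\bmf{v}_h$; summing over $K$ and applying Cauchy--Schwarz yields the factor $h^{1+j}$. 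This settles all four pairs $(i,j)$.

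For \eqref{erfun-A}, I would split $\varepsilon_A$ according to $\mathcal{A}=a-b$ against $\mathcal{A}_h=a_h+b_h$ into a diffusion part and a cross-product part. For the diffusion part I would invoke the classical exactness of the finite volume element method for the Laplacian with piecewise linear trial functions on the barycentric dual mesh, namely $a(\bmf{w}_h,\bmf{v}_h)=a_h(\bmf{w}_h,I_h^*\bmf{v}_h)$ for all $\bmf{w}_h,\bmf{v}_h\in\bmf{S}_h$; since $R_h\tilde{\bmf{m}}\in\bmf{S}_h$, this contribution vanishes identically. For the cross-product part, the key observation is that $b_h$ is a sum of line integrals over the dual segments, which lie in element interiors where $\nabla R_h\tilde{\bmf{m}}$ is constant; applying the divergence formula on each $V_i$ converts it back to a volume quantity, so that the cross-product consistency error takes the form $\int_\Omega\nabla\cdot(\bmf{\Phi}\times\nabla R_h\tilde{\bmf{m}})\cdot(\bmf{v}_h-I_h^*\bmf{v}_h)$, an $\varepsilon_h$-type functional. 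I would then Taylor-expand $\bmf{\Phi}$ about the element centroid: its constant part is matched to the finite element form up to the dual-mesh symmetry (as in \Cref{lem-Ah1}, following Lemma~2.4 of \cite{chou}), while the remainder $\bmf{\Phi}-\bar{\bmf{\Phi}}_K$ supplies one power of $h$, and the smoothness of the solution enters via the Ritz bound \eqref{R_h_p} to trade $\nabla R_h\tilde{\bmf{m}}$ for $\nabla\tilde{\bmf{m}}$ and produce $\|\tilde{\bmf{m}}\|_{\bmf{H}^{1+i}}$. Having obtained the $j=1$ estimates, I would recover the $j=0$ cases from the inverse inequality $\|\nabla\bmf{v}_h\|_{\bmf{L}^2}\leq Ch^{-1}\|\bmf{v}_h\|_{\bmf{L}^2}$.

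The hard part will be the cross-product consistency estimate at top order, that is, gaining the extra power of $h$ in the cases $i=1$. Because the finite volume form is a boundary-flux integral while the finite element form is a volume integral, the leading-order terms do not cancel termwise; the required cancellation is purely geometric, stemming from the dual segments meeting the element edges at their midpoints, which renders the induced quadrature exact to one higher order. Orchestrating this cancellation in the presence of the variable coefficient $\bmf{\Phi}$, while handling the edge-jump contributions of the piecewise-linear $R_h\tilde{\bmf{m}}$ and keeping the solution dependence in the form $\|\tilde{\bmf{m}}\|_{\bmf{H}^{1+i}}$ through the Ritz projection rather than through the non-$\bmf{H}^2$ quantity $R_h\tilde{\bmf{m}}$, is the delicate bookkeeping I expect to consume most of the proof.
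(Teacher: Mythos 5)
Your plan is correct, and on the decisive piece it coincides with the paper's own argument: the paper also splits $\varepsilon_A$ into the Laplacian part $\varepsilon_a$ and the cross-product part $\varepsilon_b$, and for $\varepsilon_b$ it applies Green's formula element by element to produce exactly your structure --- a volume term $(\mr{div}(\bmf{\Phi}\times\nabla R_h\tilde{\bmf{m}}),\cdot)$ plus element-boundary terms (the $J_1+J_2$ decomposition in Appendix~\ref{app:lem-erfun}), with the powers of $h$ coming from mean-preserving projections $L_1,L_2$ whose defining properties are precisely your orthogonality $\int_K(\bmf{v}_h-I_h^*\bmf{v}_h)\,\mr{d}x\mr{d}y=0$ and the edge identity \eqref{pk}. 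The genuine differences are these: (i) you prove \eqref{erfun-h1} from scratch by mean subtraction plus the local Poincar\'e inequality, where the paper simply cites \cite{chat1,chat2}; (ii) you dispose of the diffusion part via the exact identity $a(\bmf{w}_h,\bmf{v}_h)=a_h(\bmf{w}_h,I_h^*\bmf{v}_h)$ for piecewise linears on the midpoint dual mesh --- sharper than the $O(h^{i+j})$ bound the paper imports for $\varepsilon_a$, and legitimate here since the same midpoint algebra appears in Appendix~\ref{app:lem-Ah}; the identical computation also justifies your claim that the frozen-coefficient part of $b$ cancels exactly, which is really what powers your Taylor-expansion step (your pointer to \Cref{lem-Ah1} is slightly off target, as that lemma concerns unsymmetry of $\mathcal{A}_h$, but the underlying cancellation is the right one); (iii) your recovery of $j=0$ from $j=1$ via the inverse inequality is valid under the paper's quasi-uniformity assumption, whereas the paper carries both exponents directly through the projection estimates. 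Two cautions: your displayed reduction of $b_h$ to a single volume integral over $\Omega$ is not literally correct, because $\nabla R_h\tilde{\bmf{m}}$ jumps across element edges inside each $V_i$, so the divergence theorem necessarily leaves the edge terms you only mention later --- they are not optional bookkeeping but the second half of the identity; and the $i=1$ cases you defer (subtracting means in both factors, controlling $\mr{div}(\bmf{\Phi}\times\nabla R_h\tilde{\bmf{m}})$ with only $\bmf{\Phi}\in\bmf{X}$ via \eqref{Sobolev-inequality}, and trading $R_h\tilde{\bmf{m}}$ for $\tilde{\bmf{m}}$ through \eqref{R_h_p}) are indeed where the work lies, though in fairness the paper is no more explicit there: its displayed bounds for $J_1,J_2$ only exhibit the $h\,\|\tilde{\bmf{m}}\|_{\bmf{H}^1}\|\bmf{v}_h\|_{\bmf{H}^1}$ case, with the general exponents inherited from the machinery of \cite{chat2}.
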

The proof of this lemma is given in the Appendix \ref{app:lem-erfun}.

We have now prepared the conditions for estimating the approximation error of FVEM for the auxiliary problem. The approximation error is given below.
\begin{proposition}\label{pro-au}
Let $\tilde{\bmf{m}}$ and $\tilde{\bmf{m}}_h$ be the solutions of \eqref{equ:LL-auxiliary-problem} and \eqref{auxiliary}, respectively.
Assume that $\tilde{\bmf{m}}$ satisfies the regularity condition \eqref{regularity-condition}.
For sufficiently small $h$, let $\tilde{\bmf{m}}_h^0=R_h \tilde{\bmf{m}}_0$. Then, we have
\begin{eqnarray*}
\|\tilde{\bmf{m}}-\tilde{\bmf{m}}_h\|_{\bmf{L}^2}
\leq
C h^2, \quad
\|\tilde{\bmf{m}}-\tilde{\bmf{m}}_h\|_{\bmf{H}^1}
\leq
C h,
\end{eqnarray*}
where $C>0$ is a constant independent of $h$, but depends on 
$\|\nabla \bmf{\Phi}\|_{\bmf{L}^{\infty}}$ 
and $C_{\mathrm{reg}}$.
\label{pro2}
\end{proposition}
\begin{proof}
Let $\tilde{\bmf{m}} - \tilde{\bmf{m}}_h = ( \tilde{\bmf{m}} - R_h \tilde{\bmf{m}}) + (R_h \tilde{\bmf{m}} - \tilde{\bmf{m}}_h) =: \bmf{\bmf{\eta}} + \bmf{\bmf{\xi}}$.
According to \eqref{equ:LL-auxiliary-problem} and \eqref{auxiliary}, we have the error equation
\begin{equation*}
(\partial_t\bmf{\xi},I_h^*\bmf{v}_h)
        +
        \mathcal{A}_h(\bmf{\Phi};\bmf{\xi},I_h^*\bmf{v}_h)
        =
        -
        (\partial_t\bmf{\eta},I_h^*\bmf{v}_h)
        -
        \mathcal{A}_h(\bmf{\Phi};\bmf{\eta},I_h^*\bmf{v}_h)
        +
        \alpha (|\nabla \bmf{\Phi}|^2 (\bmf{\xi} + \bmf{\eta}),I_h^*\bmf{v}_h).
    % \label{equ:error}
\end{equation*}
By \eqref{R_h}, \eqref{erfun1} and \eqref{erfun-A},
we have
\begin{equation*}
    \begin{aligned}
       & \mathcal{A}_h(\bmf{\Phi};\bmf{\eta},I_h^*\bmf{v}_h)\nn
        =
       \mathcal{A}_h(\bmf{\Phi};\tilde{\bmf{m}},I_h^*\bmf{v}_h)
       -
       \mathcal{A}_h(\bmf{\Phi};R_h \tilde{\bmf{m}},I_h^*\bmf{v}_h)\nn\\
        =&
       -(\partial_t \tilde{\bmf{m}} - \alpha |\nabla\bmf{\Phi}|^2 \tilde{\bmf{m}}, I_h^*\bmf{v}_h)
       -
       \mathcal{A}(\bmf{\Phi};R_h \tilde{\bmf{m}},\bmf{v}_h)
       +
       [\mathcal{A}(\bmf{\Phi};R_h \tilde{\bmf{m}},\bmf{v}_h)-\mathcal{A}_h(\bmf{\Phi};R_h \tilde{\bmf{m}}, I_h^*\bmf{v}_h)]\nn\\
       =&
       -(\partial_t \tilde{\bmf{m}} - \alpha |\nabla\bmf{\Phi}|^2 \tilde{\bmf{m}}, I_h^*\bmf{v}_h)
       +
       (\partial_t \tilde{\bmf{m}} - \alpha |\nabla\bmf{\Phi}|^2 \tilde{\bmf{m}}, \bmf{v}_h)
       +
       [\mathcal{A}(\bmf{\Phi};R_h \tilde{\bmf{m}},\bmf{v}_h)-\mathcal{A}_h(\bmf{\Phi};R_h \tilde{\bmf{m}},I_h^*\bmf{v}_h)]\nn\\
        =&
       -(\partial_t \tilde{\bmf{m}} - \alpha |\nabla\bmf{\Phi}|^2 \tilde{\bmf{m}}, I_h^*\bmf{v}_h-\bmf{v}_h)
       +
       [\mathcal{A}(\bmf{\Phi};R_h \tilde{\bmf{m}},\bmf{v}_h)-\mathcal{A}_h(\bmf{\Phi};R_h \tilde{\bmf{m}} ,I_h^*\bmf{v}_h)]\nn\\
       =&
       \varepsilon_h(\partial_t \tilde{\bmf{m}} - \alpha |\nabla\bmf{\Phi}|^2 \tilde{\bmf{m}},\bmf{v}_h)
       +
       \varepsilon_A(\bmf{\Phi};R_h \tilde{\bmf{m}},\bmf{v}_h).
    \end{aligned}
\end{equation*}
Then, it holds
\begin{equation}
\begin{aligned}
        &(\partial_t\bmf{\xi},I_h^*\bmf{v}_h)
        +
        \mathcal{A}_h(\bmf{\Phi};\bmf{\xi},I_h^*\bmf{v}_h)\\
         = &
        \!-
        (\partial_t\bmf{\eta},I_h^*\bmf{v}_h)
        \!-\!
        \varepsilon_h(\partial_t \tilde{\bmf{m}} - \alpha |\nabla\bmf{\Phi}|^2 \tilde{\bmf{m}},\bmf{v}_h)
        \!-\!
       \varepsilon_A(\bmf{\Phi};R_h \tilde{\bmf{m}},\bmf{v}_h)
        +
        \alpha (|\nabla \bmf{\Phi}|^2 (\bmf{\xi} \!+\! \bmf{\eta}),I_h^*\bmf{v}_h).
\end{aligned}
\label{equ:error1}
\end{equation}
According to the definition of $\bmf{\xi}$, it has the form
\begin{equation*}
    \bmf{\xi} = \sum_{\mathbf{x}_i\in\mathcal{N}_h^0} \bmf{\xi}_h(\bmf{x}_i) \phi_i(\bmf{x}).
\end{equation*}
We have the equality
\begin{equation*}
    (\partial_t\bmf{\xi},I_h^*\bmf{\xi}) = \frac{1}{2}\partial_t\||\bmf{\xi}|\|_{0}^2.
\end{equation*}
Then, choose $\bmf{v}_h = \bmf{\xi}$ in \eqref{equ:error1},
and applying Lemma \ref{lem-Ah} and Lemma \ref{erfunction} yield
\begin{equation*}
    \begin{aligned}
        &\frac{1}{2} \partial_t \||\bmf{\xi}|\|_{0}^2
        +
        \alpha \|\bmf{\xi}\|_{\bmf{H}^1}^2\\
         \leq &
        -
        (\partial_t\bmf{\eta},I_h^*\bmf{\xi})
        -
        \varepsilon_h(\partial_t \tilde{\bmf{m}} - \alpha |\nabla\bmf{\Phi}|^2 \tilde{\bmf{m}},\bmf{\xi})
        -
       \varepsilon_A(\bmf{\Phi};R_h \tilde{\bmf{m}},\bmf{\xi})
        +
        \alpha (|\nabla \bmf{\Phi}|^2 (\bmf{\xi} + \bmf{\eta}),I_h^*\bmf{\xi})\\
        \leq &
        C \|\partial_t \bmf{\eta}\|_{\bmf{L}^2} \||\bmf{\xi}|\|_{0}
        +
        C h^2 \|\partial_t \tilde{\bmf{m}} - \alpha |\nabla\bmf{\Phi}|^2 \tilde{\bmf{m}}\|_{\bmf{H}^1} \|\bmf{\xi}\|_{\bmf{H}^1}
        +
        C h^2 \|\tilde{\bmf{m}}\|_{\bmf{H}^2} \|\bmf{\xi}\|_{\bmf{H}^1}\\
        & +
        C \|\nabla \bmf{\Phi}\|_{\bmf{L}^{\infty}} ( \||\bmf{\xi}|\|_{0}+\|\bmf{\eta}\|_{\bmf{L}^2} ) \|\bmf{\xi}\|_{\bmf{L}^2}\\
         \leq &
        C (\|\partial_t \bmf{\eta}\|_{\bmf{L}^2}^2
        + \epsilon^{-1} h^4 \|\partial_t \tilde{\bmf{m}} - \alpha |\nabla\bmf{\Phi}|^2 \tilde{\bmf{m}}\|_{\bmf{H}^1}^2
        + \epsilon^{-1} h^4 \|\tilde{\bmf{m}}\|_{\bmf{H}^2}^2
        + \|\bmf{\eta}\|_{\bmf{L}^2}^2)
        \\
        & +
        C \||\bmf{\xi}|\|_{0}^2
        +
        2 \epsilon \|\bmf{\xi}\|_{\bmf{H}^1}^2,
    \end{aligned}
    % \label{equ:error-L2-2}
\end{equation*}
in which we use the H\"{o}lder inequality and Young's inequality.

Applying the interpolation inequality \eqref{R_h_p} for $\bmf{\mathbf{\eta}}$ and owing to the assumption $\bmf{\Phi}\in\bmf{X}$, we obtain 
\begin{equation}\label{eta-L-2}
\begin{aligned}
    \|\partial_t \boldsymbol{\eta}\|_{\mathbf{L}^2}^2
        + \| \boldsymbol{\eta} \|_{\mathbf{L}^2}^2 
        &=  \|\partial_t ( \tilde{\mathbf{m}} - R_h \tilde{\mathbf{m}})\|_{\mathbf{L}^2}^2
        + \| \tilde{\mathbf{m}} - R_h \tilde{\mathbf{m}} \|_{\mathbf{L}^2}^2 \\
        &\leq C_0 (\|\partial_t \tilde{\mathbf{m}}\|_{\mathbf{H}^2}^2 + \|\tilde{\mathbf{m}}\|_{\mathbf{H}^2}^2) h^4,
\end{aligned}    
\end{equation}
and 
\begin{equation}\label{nolinear-f}
    h^4 \|\partial_t \tilde{\mathbf{m}} - \alpha |\nabla\mathbf{\Phi}|^2\tilde{\mathbf{m}}\|_{\mathbf{H}^1}^2
    \leq C_1 (\|\partial_t \tilde{\mathbf{m}}\|_{\mathbf{H}^1}^2 + \alpha \|\nabla\bmf{\Phi}\|_{\bmf{L}^{\infty}}^2\|\tilde{\mathbf{m}}\|_{\mathbf{H}^1}^2) h^4, 
\end{equation}
where $C_0$ and $C_1$ are independent of $h$. We now arrive at 
\begin{equation}\label{equ:error-L2-2}
    \frac{1}{2} \partial_t \||\mathbf{\xi}|\|_{0}^2
        +
        \alpha \|\mathbf{\xi}\|_{\mathbf{H}^1}^2
        \leq
          C_2 (\|\partial_t \tilde{\mathbf{m}}\|_{\mathbf{H}^2}^2 + \|\tilde{\mathbf{m}}\|_{\mathbf{H}^2}^2 + \alpha \|\nabla\bmf{\Phi}\|_{\bmf{L}^{\infty}}^2\|\tilde{\mathbf{m}}\|_{\mathbf{H}^1}^2) h^4
         +
        C \||\mathbf{\xi}|\|_{0}^2
        +
        2 \epsilon \|\mathbf{\xi}\|_{\mathbf{H}^1}^2,
\end{equation}
where $C_2$ only relies to $C_0$ and $C_1$.

Next, choosing $\epsilon = \alpha/2$ and using the Gronwall's lemma \cite{evans2022partial} yield
\begin{equation*}
 \||\bmf{\xi}|\|_{0} ^2
\leq
e^{CT} (\|\bmf{\xi}(0)\|_{0}^2 + C_2(\|\partial_t \tilde{\mathbf{m}}\|_{{L}^{\infty}(0, T; \mathbf{H}^2)}^2 + \|\tilde{\mathbf{m}}\|_{{L}^{\infty}(0, T; \mathbf{H}^2)}^2 + \alpha \|\nabla\bmf{\Phi}\|_{\bmf{L}^{\infty}}^2\|\tilde{\mathbf{m}}\|_{{L}^{\infty}(0, T; \mathbf{H}^1)}^2) T h^4).
\end{equation*}
Since $\tilde{\mathbf{m}}(0) = R_h \tilde{\mathbf{m}}(0)$, we have $\bmf{\xi}(0)=0$.
Hence, 
\begin{equation*}
  \||\bmf{\xi}|\|_{0} 
  \leq 
  C h^2. 
\end{equation*} 
Owing to \eqref{norm-equ}, we have
\begin{equation}\label{estimate-xi}
  \|\bmf{\xi}\|_{\bmf{L}^2} 
  \leq 
  C h^2.    
\end{equation}
In the above estimations, we have used the bounds of $\|\partial_t \tilde{\bmf{m}}\|_{{L}^{\infty}(0, T; \bmf{H}^2)}$ 
and $\|\tilde{\bmf{m}}\|_{{L}^{\infty}(0, T; \bmf{H}^2)}$, which have been given by the regularity assumption \eqref{regularity-condition}.
So, the constant $C$ in \eqref{estimate-xi} depends on $C_{\mathrm{reg}}$ and $\|\nabla \bmf{\Phi}\|_{\bmf{L}^{\infty}}$ but is independent of $h$.

Meanwhile, taking $\bmf{v}_h = \partial_t \bmf{\xi}$ in \eqref{equ:error1},
we also have
\begin{equation*}
    \begin{aligned}
        &(\partial_t\bmf{\xi},I_h^*\partial_t \bmf{\xi})
        +
        \mathcal{A}_h(\bmf{\Phi};\bmf{\xi},I_h^*\partial_t \bmf{\xi})\\
         = &
        \!-
        (\partial_t\bmf{\eta},I_h^*\partial_t \bmf{\xi})
        \!-\!
        \varepsilon_h(\partial_t \tilde{\bmf{m}} - \alpha |\nabla\bmf{\Phi}|^2 \tilde{\bmf{m}},\partial_t \bmf{\xi})
        \!-\!
       \varepsilon_A(\bmf{\Phi};R_h \tilde{\bmf{m}},\partial_t \bmf{\xi})
        \!+\!
        \alpha (|\nabla \bmf{\Phi}|^2 (\bmf{\xi} \!+\! \bmf{\eta}),I_h^*\partial_t \bmf{\xi}).
    \end{aligned}
\end{equation*}
Since
\begin{equation*}
\mathcal{A}_h(\bmf{\Phi};\bmf{\xi},I_h^*\partial_t \bmf{\xi})
\geq
\frac{\alpha}{2} \partial_t \|\bmf{\xi}\|_{\bmf{H}^1}^2
-
\frac{\alpha}{2}[\mathcal{A}_h(\bmf{\Phi};\partial_t\bmf{\xi},I_h^*\bmf{\xi})
-
\mathcal{A}_h(\bmf{\Phi};\bmf{\xi},I_h^*\partial_t\bmf{\xi})].
\end{equation*}
Applying Lemma \ref{erfunction} and Lemma \ref{lem-Ah},
we obtain
\begin{equation*}
\begin{aligned}
&\||\partial_t \bmf{\xi}|\|_{0}^2
+
\frac{\alpha}{2} \partial_t \|\bmf{\xi}\|_{\bmf{H}^1}^2\\
 \leq &
(\alpha |\nabla \bmf{\Phi}|^2(\bmf{\eta}+\bmf{\xi}),I_h^*\partial_t \bmf{\xi})
-
(\partial_t\bmf{\eta},I_h^*\partial_t \bmf{\xi})
-
\varepsilon_h(\partial_t \tilde{\bmf{m}} - \alpha |\nabla\bmf{\Phi}|^2 \tilde{\bmf{m}},\partial_t \bmf{\xi}) \\
& -
\varepsilon_A(\bmf{\Phi};R_h \tilde{\bmf{m}},\partial_t \bmf{\xi})
+
\frac{1}{2}[\mathcal{A}_h(\bmf{\Phi};\partial_t\bmf{\xi},I_h^*\bmf{\xi})
-
\mathcal{A}_h(\bmf{\Phi};\bmf{\xi},I_h^*\partial_t\bmf{\xi})]\\
\leq &
C \|\nabla\bmf{\Phi}\|_{\bmf{L}^{\infty}}^2 (\| \bmf{\eta} \|_{\bmf{L}^2} + \| \bmf{\xi} \|_{\bmf{L}^2}) \|| \partial_t \bmf{\xi} |\|_{0}
+
C \| \partial_t\bmf{\eta} \|_{\bmf{L}^2} \|| \partial_t \bmf{\xi} |\|_{0}\\
&+
C h \|\partial_t \tilde{\bmf{m}} - \alpha |\nabla\bmf{\Phi}|^2 \tilde{\bmf{m}}\|_{\bmf{H}^1} \|| \partial_t \bmf{\xi} |\|_{0}
+
C h \|\tilde{\bmf{m}}\|_{\bmf{H}^2} \|| \partial_t \bmf{\xi} |\|_{0}
+
C h \|\bmf{\xi}\|_{\bmf{H}^1} \|\partial_t\bmf{\xi}\|_{\bmf{H}^1}\\
\leq &
C \epsilon^{-1}\| \bmf{\eta} \|_{\bmf{L}^2}^2 + C\epsilon^{-1} \| \bmf{\xi} \|_{\bmf{L}^2}^2
+
\epsilon \|| \partial_t \bmf{\xi} |\|_{0}^2
+
C \epsilon^{-1} \| \partial_t\bmf{\eta} \|_{\bmf{L}^2}^2
+
\epsilon \|| \partial_t \bmf{\xi} |\|_{0}^2
+
C \epsilon^{-1} h^2 \|\partial_t \tilde{\bmf{m}} - \alpha |\bmf{\Phi}|^2 \tilde{\bmf{m}}\|_{\bmf{H}^1}^2 \\
& +
\epsilon \|| \partial_t \bmf{\xi} |\|_{0}^2
+
C \epsilon^{-1} h^2 \|\tilde{\bmf{m}}\|_{\bmf{H}^2}^2
+
\epsilon \|| \partial_t \bmf{\xi} |\|_{0}^2
+
C \epsilon^{-1} \|\bmf{\xi}\|_{\bmf{H}^1}^2
+
\epsilon \|| \partial_t \bmf{\xi} |\|_{0}^2\\
\leq &
C \epsilon^{-1} (\|\bmf{\eta}\|_{\bmf{L}^2}^2 + \|\partial_t \bmf{\eta}\|_{\bmf{L}^2}^2
+ h^2 \|\partial_t \tilde{\bmf{m}} - \alpha |\nabla\bmf{\Phi}|^2 \tilde{\bmf{m}}\|_{\bmf{H}^1}^2
+
h^2 \|\tilde{\bmf{m}}\|_{\bmf{H}^2}^2 
+ 
\|\bmf{\xi}\|_{\bmf{H}^1}^2)
+ 5 \epsilon \|| \partial_t \bmf{\xi} |\|_{0}^2,
\end{aligned}
\end{equation*}
in which the H\"{o}lder inequality, Young's inequality and inverse inequality are employed.
Similar to the proof of \eqref{equ:error-L2-2}, we also have 
\begin{equation*}
\begin{aligned}
\||\partial_t \mathbf{\xi}|\|_{0}^2
+
\frac{\alpha}{2} \partial_t \|\mathbf{\xi}\|_{\mathbf{H}^1}^2
\leq&
C_0 h^4 (\|\partial_t \tilde{\mathbf{m}}\|_{\mathbf{H}^2}^2 + \|\tilde{\mathbf{m}}\|_{\mathbf{H}^2}^2) 
+ 5 \epsilon \||\partial_t\mathbf{\xi}|\|_{0}^2
+
C \|\mathbf{\xi}\|_{\mathbf{H}^1}^2\\
&+
C_1 h^2 (\|\partial_t \tilde{\mathbf{m}}\|_{\mathbf{H}^1}^2 + \alpha \|\nabla\bmf{\Phi}\|_{\bmf{L}^{\infty}}^2\|\tilde{\mathbf{m}}\|_{\mathbf{H}^1}^2)\\
\leq &
C_3 h^2 (h^2\|\partial_t \tilde{\mathbf{m}}\|_{\mathbf{H}^2}^2 + h^2\|\tilde{\mathbf{m}}\|_{\mathbf{H}^2}^2
+
\|\partial_t \tilde{\mathbf{m}}\|_{\mathbf{H}^1}^2 + \alpha \|\nabla\bmf{\Phi}\|_{\bmf{L}^{\infty}}^2\|\tilde{\mathbf{m}}\|_{\mathbf{H}^1}^2) \\
&+ 
5 \epsilon \||\partial_t\mathbf{\xi}|\|_{0}^2
+
C \|\mathbf{\xi}\|_{\mathbf{H}^1}^2\\
\leq &
C_4 h^2 (\|\partial_t \tilde{\mathbf{m}}\|_{\mathbf{H}^2}^2 + \|\tilde{\mathbf{m}}\|_{\mathbf{H}^2}^2
+ \alpha \|\nabla\bmf{\Phi}\|_{\bmf{L}^{\infty}}^2\|\tilde{\mathbf{m}}\|_{\mathbf{H}^1}^2) \\
&+ 
5 \epsilon \||\partial_t\mathbf{\xi}|\|_{0}^2
+
C \|\mathbf{\xi}\|_{\mathbf{H}^1}^2,    
\end{aligned}
\end{equation*}
where $C_4$ only relies to $C_0$ and $C_1$.

Choosing $\epsilon = 1/5$ and using the Gronwall's lemma yield 
\begin{equation*}
\|\bmf{\xi}(t)\|_{\bmf{H}^1}^2
\leq
e^{CT} (\|\bmf{\xi}(0)\|_{\bmf{H}^1}^2 + C_4(\|\partial_t \tilde{\mathbf{m}}\|_{L^{\infty}(0,T;\mathbf{H}^2)}^2 + \|\tilde{\mathbf{m}}\|_{L^{\infty}(0,T;\mathbf{H}^2)}^2
+ \alpha \|\nabla\bmf{\Phi}\|_{\bmf{L}^{\infty}}^2\|\tilde{\mathbf{m}}\|_{L^{\infty}(0,T;\mathbf{H}^1)}^2) T h^2).
\end{equation*}
Since $\bmf{\xi}(0)=0$, we have
\begin{equation*}
\|\bmf{\xi}(t)\|_{\bmf{H}^1} \leq C h.
\end{equation*}
Here we used the bounds of $\|\partial_t \tilde{\bmf{m}}\|_{L^{\infty}(0,T;\bmf{H}^2)}$,
$\|\nabla \bmf{\Phi}\|_{\bmf{L}^{\infty}}$,
and $\|\tilde{\bmf{m}}\|_{L^{\infty}(0,T;\bmf{H}^2)}$.  
Hence, the constant $C$ depends on $\|\nabla \bmf{\Phi}\|_{\bmf{L}^{\infty}}$ and $C_{\mathrm{reg}}$ as well. At last, 
using \eqref{R_h_p} and the triangle inequality yields Proposition \ref{pro2}, which completes the proof.
\end{proof}

Consequently,
the approximation error of the FVEM applied to the LL equation can be readily obtained.
\begin{theorem}
Let $\bmf{m}$ and $\bmf{m}_h$ be the solutions of \eqref{equ:simplicity-LL-dimensionless} with homogeneous Dirichlet boundary conditions and \eqref{equ:LL-semi-discretized1}, respectively.
Assume the regularity condition \eqref{regularity-condition} holds.
For sufficiently small $h$, let $\bmf{m}_h^0 = I_h \bmf{m}_0$.
Then, we have
\begin{equation}
    \|\bmf{m}-\bmf{m}_h\|_{\bmf{L}^2}
    \leq
    C h^2
    \label{error-L2}
\end{equation}
and
\begin{equation}
    \|\bmf{m} - \bmf{m}_h\|_{\bmf{H}^1}
    \leq
    C h,
    \label{error-H1}
\end{equation}
where $ C > 0 $ is independent of $h$.
\label{thm-error}
\end{theorem}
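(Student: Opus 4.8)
The plan is to reduce Theorem~\ref{thm-error} to Proposition~\ref{pro2} by freezing the coefficient at the exact solution and then controlling the resulting nonlinear defect. First I observe that the exact solution $\bmf{m}$ of \eqref{equ:simplicity-LL-dimensionless} is itself a solution of the auxiliary problem \eqref{equ:LL-auxiliary-problem} with $\bmf{\Phi}=\bmf{m}$, because the extra term $\nabla\bmf{m}\times\nabla\bmf{m}=\sum_k\partial_k\bmf{m}\times\partial_k\bmf{m}$ vanishes identically; existence and the requisite regularity of this $\bmf{m}$ are furnished by the fixed-point construction behind Lemma~\ref{lem-con} together with the assumption \eqref{regularity-condition}, and Lemma~\ref{lem-m-H2} guarantees that all constants depending on $\|\nabla\bmf{m}\|_{\bmf{L}^\infty}$ are controllable. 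I then introduce the intermediate function $\bmf{w}_h\in\bmf{S}_h$ defined as the solution of the discrete auxiliary scheme \eqref{auxiliary} with $\bmf{\Phi}=\bmf{m}$, i.e. with the \emph{exact} coefficient frozen both in $\mathcal{A}_h$ and in the right-hand side. Applying Proposition~\ref{pro2} with this choice of $\bmf{\Phi}$ (so that $\tilde{\bmf{m}}=\bmf{m}$ and $\tilde{\bmf{m}}_h=\bmf{w}_h$) immediately yields $\|\bmf{m}-\bmf{w}_h\|_{\bmf{L}^2}\le Ch^2$ and $\|\bmf{m}-\bmf{w}_h\|_{\bmf{H}^1}\le Ch$.

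The remaining and genuinely nonlinear task is to estimate $\bmf{e}_h:=\bmf{w}_h-\bmf{m}_h\in\bmf{S}_h$, since $\bmf{m}_h$ from \eqref{equ:LL-semi-discretized1} is exactly the discrete auxiliary solution with $\bmf{\Phi}=\bmf{m}_h$. Subtracting \eqref{equ:LL-semi-discretized1} from the defining equation of $\bmf{w}_h$ and using that $b_h(\cdot;\cdot,\cdot)$ is linear in its first slot, I would derive an error equation of the form $(\partial_t\bmf{e}_h,I_h^*\bmf{v}_h)+\mathcal{A}_h(\bmf{m};\bmf{e}_h,I_h^*\bmf{v}_h)=\mathcal{R}(\bmf{v}_h)$, where the defect $\mathcal{R}(\bmf{v}_h)$ collects the coefficient mismatch $-\,b_h(\bmf{m}-\bmf{m}_h;\bmf{m}_h,I_h^*\bmf{v}_h)$ from the gyromagnetic term and the source mismatch $\alpha\big(|\nabla\bmf{m}|^2\bmf{e}_h+(|\nabla\bmf{m}|^2-|\nabla\bmf{m}_h|^2)\bmf{m}_h,\,I_h^*\bmf{v}_h\big)$. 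In each place where the full error $\bmf{m}-\bmf{m}_h$ appears I would split it as $(\bmf{m}-\bmf{w}_h)+\bmf{e}_h$, so that the projection part is already $O(h)$ in $\bmf{H}^1$ by the previous step and only the $\bmf{e}_h$ part must be absorbed.

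Choosing $\bmf{v}_h=\bmf{e}_h$ and invoking the coercivity \eqref{Ah-coe} produces $\tfrac12\mr{d}_t\|\bmf{e}_h\|_{\bmf{L}^2}^2+C\|\bmf{e}_h\|_{\bmf{H}^1}^2$ on the left. On the right I would bound the frozen-source term by $C\|\bmf{e}_h\|_{\bmf{L}^2}^2$ using $\|\nabla\bmf{m}\|_{\bmf{L}^\infty}\le C$; for the quadratic pieces I would use the identity $|\nabla\bmf{m}|^2-|\nabla\bmf{m}_h|^2=(\nabla\bmf{m}+\nabla\bmf{m}_h):\nabla(\bmf{m}-\bmf{m}_h)$ together with the bounds $\|\nabla\bmf{m}_h\|_{\bmf{L}^\infty}\le C$ and $\|\bmf{m}_h\|_{\bmf{L}^\infty}\le C$, so that each factor carrying $\nabla\bmf{e}_h$ is paired against a bounded coefficient and against $\bmf{e}_h$ in $\bmf{L}^2$; Young's inequality then moves the $\|\bmf{e}_h\|_{\bmf{H}^1}^2$ contribution into the coercive term with a small constant. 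The coefficient-mismatch term $b_h(\bmf{e}_h;\bmf{m}_h,I_h^*\bmf{e}_h)$ is treated the same way, the $\bmf{e}_h$ in the first slot being controlled in $\bmf{L}^2$ against the bounded $\nabla\bmf{m}_h$. A Gronwall argument, started from $\bmf{e}_h(0)=R_h\bmf{m}_0-I_h\bmf{m}_0$ (which is $O(h^2)$ in $\bmf{L}^2$ by \eqref{R_h_p} and standard interpolation), then gives $\|\bmf{e}_h\|_{\bmf{L}^2}\le Ch^2$. To upgrade to $\bmf{H}^1$ I would repeat the test with $\bmf{v}_h=\partial_t\bmf{e}_h$, exactly as in the second half of the proof of Proposition~\ref{pro2}, using the near-symmetry estimate \eqref{Ah-relation} of Lemma~\ref{lem-Ah1} to convert $\mathcal{A}_h(\bmf{m};\bmf{e}_h,I_h^*\partial_t\bmf{e}_h)$ into $\tfrac12\mr{d}_t\|\bmf{e}_h\|_{\bmf{H}^1}^2$ up to an $O(h)$ commutator, and the inverse inequality to control $\|\bmf{e}_h\|_{\bmf{H}^1}$ against $\|\partial_t\bmf{e}_h\|$; this yields $\|\bmf{e}_h\|_{\bmf{H}^1}\le Ch$. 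The triangle inequality with the $\bmf{w}_h$ bounds then delivers \eqref{error-L2}--\eqref{error-H1}.

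I expect the main obstacle to be precisely the closure of the nonlinear defect $\mathcal{R}$: one must verify that every occurrence of $\nabla\bmf{e}_h$ is multiplied by an $\bmf{L}^\infty$-bounded coefficient, which is why both $\|\nabla\bmf{m}\|_{\bmf{L}^\infty}$ (via Lemma~\ref{lem-m-H2} and \eqref{regularity-condition}) and the \emph{a priori} bound $\|\nabla\bmf{m}_h\|_{\bmf{L}^\infty}\le C$ are indispensable, so that the quadratic terms genuinely reduce to products of the type $\|\bmf{e}_h\|_{\bmf{L}^2}\|\bmf{e}_h\|_{\bmf{H}^1}$ and can be absorbed into the coercive term, leaving a Gronwall-closable inequality. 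Without the uniform gradient bound on $\bmf{m}_h$ one could not control $b_h(\bmf{e}_h;\bmf{m}_h,I_h^*\bmf{e}_h)$ by an $\bmf{L}^2$ norm of $\bmf{e}_h$, and the argument would instead require an auxiliary induction to keep $\bmf{e}_h$ subcritical.
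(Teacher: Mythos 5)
Your route is genuinely different from the paper's, and the difference is exactly where the trouble lies. The paper never compares two \emph{discrete} solutions carrying different coefficients: it runs the fixed-point iteration of Lemma \ref{lem-con} at the continuous level, applies Proposition \ref{pro2} to pairs that share the \emph{same} frozen $\bmf{\Phi}$ (so the only source mismatch is $\alpha|\nabla\bmf{\Phi}|^2(\tilde{\bmf{m}}-\tilde{\bmf{m}}_h)$, which is zeroth order in the error), and then passes to the limit. You instead apply Proposition \ref{pro2} once with $\bmf{\Phi}=\bmf{m}$ and estimate $\bmf{e}_h=\bmf{w}_h-\bmf{m}_h$ directly, which forces you to absorb a coefficient mismatch $b_h(\bmf{m}-\bmf{m}_h;\bmf{m}_h,I_h^*\bmf{v}_h)$ and a source mismatch $\alpha\bigl((|\nabla\bmf{m}|^2-|\nabla\bmf{m}_h|^2)\bmf{m}_h,I_h^*\bmf{v}_h\bigr)$ that are \emph{first} order in the error. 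These terms do not arise in the paper's argument, and they are not absorbable at the rate you claim.

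Concretely, the gap is in the $L^2$ bound \eqref{error-L2} (your $H^1$ bound \eqref{error-H1} does survive). Splitting $\bmf{m}-\bmf{m}_h=\bmf{\rho}+\bmf{e}_h$ with $\bmf{\rho}=\bmf{m}-\bmf{w}_h$, the defect contains $\alpha\bigl(((\nabla\bmf{m}+\nabla\bmf{m}_h):\nabla\bmf{\rho})\,\bmf{m}_h,\,I_h^*\bmf{e}_h\bigr)$, and Proposition \ref{pro2} gives only $\|\nabla\bmf{\rho}\|_{\bmf{L}^2}\leq Ch$; there is no $O(h^2)$ bound on $\nabla\bmf{\rho}$. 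Pairing this against bounded coefficients and $\|\bmf{e}_h\|_{\bmf{L}^2}$, as you propose, yields $Ch\|\bmf{e}_h\|_{\bmf{L}^2}$, so Young and Gronwall close at $\mr{d}_t\|\bmf{e}_h\|_{\bmf{L}^2}^2\leq Ch^2+C\|\bmf{e}_h\|_{\bmf{L}^2}^2$, i.e.\ $\|\bmf{e}_h\|_{\bmf{L}^2}\leq Ch$ — first order, not second. The term $b_h(\bmf{\rho};\bmf{m}_h,I_h^*\bmf{e}_h)$ hits the same ceiling: $b_h$ sees $\bmf{\rho}$ only through line integrals on $\partial V_i$, and the available bounds (either $\|\bmf{\rho}\|_{\bmf{L}^\infty}\leq Ch$, or the trace bound $\|\bmf{\rho}\|_{\bmf{L}^2(K)}+h|\bmf{\rho}|_{\bmf{H}^1(K)}\leq Ch^2$ multiplied by the inverse-estimate factor $h^{-1}$ coming from the nodal values $\bmf{e}_h(P_i)$) again give $Ch\|\bmf{e}_h\|_{\bmf{L}^2}$. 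In a pure Galerkin setting one would recover second order by integrating by parts to move the derivative off $\bmf{\rho}$ and invoking $\|\bmf{\rho}\|_{\bmf{L}^2}\leq Ch^2$, but here the test function $I_h^*\bmf{e}_h$ is piecewise constant, so integration by parts produces jump terms across control-volume boundaries that you have no tools to control — this is precisely the difficulty the paper's continuous-level fixed-point detour is designed to sidestep. To rescue your plan for \eqref{error-L2} you would need a duality or summation-by-parts argument adapted to $I_h^*$ (or an $\bmf{L}^\infty$/superconvergence estimate for $\bmf{\rho}$ stronger than what Proposition \ref{pro2} provides); as written, the argument proves only $\|\bmf{m}-\bmf{m}_h\|_{\bmf{L}^2}\leq Ch$.
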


\begin{proof}
Recall the auxiliary problem
\begin{equation*}
    \partial_t\tilde{\bmf{m}}
    -
    \alpha \Delta\tilde{\bmf{m}}
    +
    \bmf{\Phi} \times \Delta \tilde{\bmf{m}}
    +
    \nabla\bmf{\Phi} \times \nabla \tilde{\bmf{m}}
    =
    \alpha |\nabla\bmf{\Phi}|^2 \tilde{\bmf{m}}.
\end{equation*}
Let $\bmf{\Phi} = \tilde{\bmf{m}}$, and it becomes the LL equation. Set $\bmf{\Phi} = \bmf{m}^l$ and construct the iteration equation over the time interval $[T_1, T_2]$ as \eqref{equ:LL-linearized}. Then, by Lemma \ref{lem-con}, the sequence $\{ {\bmf{m}}^l\}$ converges in $\bmf{H}^2$. Denote $\bmf{m}$ the limit of the sequence, and there holds
\begin{equation*}
\lim_{l\to\infty}\| {\bmf{m}}^l - \bmf{m}\|_{\bmf{H}^2} = 0.
\end{equation*}
For the equation \eqref{equ:LL-linearized}, we construct the discrete weak form
\begin{equation}\label{equ:weak-form-iteration}
\big( \partial_t {\bmf{m}}_h^{l+1},
I_h^* \bmf{v}_h \big)
+ \mathcal{A}_h\big( {\bmf{m}}_h^l;  {\bmf{m}}_h^{l+1}, I_h^* \bmf{v}_h\big)
= \alpha\big(|\nabla {\bmf{m}}_h^l|^2  {\bmf{m}}_h^{l+1}, I_h^* \bmf{v}_h\big), \qquad \forall \bmf{v}_h \in \bmf{S}_h,
\end{equation}
where the initial value is given by $\bmf{m}_h(\cdot, T_1)$.
Similarly, it satisfies
\begin{equation*}
\lim_{l\to\infty}\| {\bmf{m}}_h^l - \bmf{m}_h\|_{\bmf{H}^2} = 0,
\quad
\|\nabla {\bmf{m}}_h^l\|_{\bmf{L}^\infty}\leq C.
\end{equation*}

Next, for any given $l\ge 0$, we have known that $\|\nabla\bmf{m}^l\|_{\bmf{L}^\infty}\leq C$. And then, for the problem
\begin{equation}\label{equ:weak-form-iteration2}
\big( \partial_t\tilde{\bmf{m}}_h^{l+1},
I_h^* \bmf{v}_h \big)
+ \mathcal{A}_h\big( {\bmf{m}}^l; \tilde{\bmf{m}}_h^{l+1}, I_h^* \bmf{v}_h\big)
= \alpha\big(|\nabla {\bmf{m}}^l|^2 \tilde{\bmf{m}}_h^{l+1}, I_h^* \bmf{v}_h\big), \qquad \forall \bmf{v}_h \in \bmf{S}_h,
\end{equation}
using the proposition \ref{pro-au} yields
\begin{equation*}
\| {\bmf{m}}^{l+1} - \tilde{\bmf{m}}_h^{l+1}\|_{\bmf{L}^2} \le C h^2,
\quad
\| {\bmf{m}}^{l+1} - \tilde{\bmf{m}}_h^{l+1}\|_{\bmf{H}^1} \le C h,    
\end{equation*}
where $C>0$ depends only on $\|\nabla\bmf{m}^l\|_{\bmf{L}^{\infty}}$ and $C_{\rm reg}$ and is independent of $h$.
We now decompose the total error into
\begin{equation*}
\bmf{m} - \bmf{m}_h
= (\bmf{m} - {\bmf{m}}^l)
+ ({\bmf{m}}^l - {\bmf{m}}_h^l)
+ ({\bmf{m}}_h^l - \bmf{m}_h),
\end{equation*}
in which ${\bmf{m}}^l - {\bmf{m}}_h^l = ({\bmf{m}}^l - \tilde{\bmf{m}}_h^l) + (\tilde{\bmf{m}}_h^l - {\bmf{m}}_h^l)$. Denote $\bmf{\rho}^l := {\bmf{m}}^l - \tilde{\bmf{m}}_h^l$ and $\bmf{\theta}^l = \tilde{\bmf{m}}_h^l - {\bmf{m}}_h^l$, and we only need to estimate the error of
$\bmf{\theta}^l$.

Subtract \eqref{equ:weak-form-iteration} from \eqref{equ:weak-form-iteration2} and we have
	\begin{equation}
		(\partial_t\bmf{\theta}^{l+1}, I_h^*\bmf{v}_h)
		+
		\mathcal{A}_h(\bmf{m}^l;\bmf{\theta}^{l+1}, I_h^*\bmf{v}_h)
		=
		\mathcal{R}_1(\bmf{v}_h)
		+
		\mathcal{R}_2(\bmf{v}_h),
		\qquad \forall\, \bmf{v}_h\in \bmf{S}_h,
		\label{thm37-theta-eq}
	\end{equation}
	where
	\begin{equation}
		\mathcal{R}_1(\bmf{v}_h)
		:=
		-b_h(\bmf{m}^l;\bmf{m}_h^{l+1},I_h^*\bmf{v}_h)
		+
		b_h(\bmf{m}_h^l;\bmf{m}_h^{l+1},I_h^*\bmf{v}_h),
		\label{thm37-R1}
	\end{equation}
    and
	\begin{equation}
		\mathcal{R}_2(\bmf{v}_h)
		:=
		\alpha (|\nabla\bmf{m}^l|^2\tilde{\bmf{m}}_h^{l+1}-|\nabla\bmf{m}_h^l|^2\bmf{m}_h^{l+1}, I_h^*\bmf{v}_h).
		\label{thm37-R2}
	\end{equation}
	Choosing $\bmf{v}_h=\bmf{\theta}^{l+1}$ in \eqref{thm37-theta-eq}, and using the Lemma \ref{lem-Ah}, we have
	\begin{equation}
    \begin{aligned}
    	&\frac12\partial_t \||\bmf{\theta}^{l+1}|\|_0^2
		+
		\alpha\|\bmf{\theta}^{l+1}\|_{\bmf{H}^1}^2 \\
		\leq&
        C \|\bmf{m}_h^l-\bmf{m}^l\|_{\bmf{L}^2}\|\bmf{m}_h^{l+1}\|_{\bmf{L}^{\infty}}\|\bmf{\theta}^{l+1}\|_{\bmf{H}^1} 
        + 
        \alpha (|\nabla\bmf{m}^l|^2\bmf{\theta}^{l+1}, I_h^*\bmf{\theta}^{l+1})\\
        \leq
        & \frac{C}{2\epsilon} \|\bmf{m}_h^l-\bmf{m}^l\|^2_{\bmf{L}^2}\|\bmf{m}_h^{l+1}\|_{\bmf{L}^{\infty}}^2 +
        \frac{\epsilon}{2} \|\bmf{\theta}^{l+1}\|_{\bmf{H}^1}^2 +
        \alpha \|\nabla\bmf{m}^l\|_{\bmf{L}^{\infty}}^2 \||\bmf{\theta}^{l+1}|\|_0^2,
    \end{aligned}
		\label{thm37-basic-energy}
	\end{equation}
	where we used the assumption \eqref{working_set}, the regularity condition \eqref{regularity-condition}, and Young's inequality. Choose $\epsilon = 2\alpha$ and we arrive at
    \begin{align*}
        \frac12\partial_t \||\bmf{\theta}^{l+1}|\|_0^2
        &\leq
        C \big(\|\bmf{m}_h^l-\bmf{m}^l\|^2_{\bmf{L}^2} + \||\bmf{\theta}^{l+1}|\|_0^2 \big) \\
        &\leq C \big(\|\bmf{\rho}^{l}\|_{\bmf{L}^{2}}^2 + \||\bmf{\theta}^{l}|\|_0^2 + \||\bmf{\theta}^{l+1}|\|_0^2 \big).
    \end{align*}
    Then, taking summation with respect to $l$, and using Gronwall's lemma yield
\begin{equation*}
\sum_{k=0}^l \||\bmf{\theta}^{k}|\|_0^2 \leq e^{CT}(Ch^4 + C\||\bmf{\theta}^0|\|_{0}^2) . 
\end{equation*}
Since $\bmf{\theta}^0=0$, we have
\begin{equation*}
\sum_{k=0}^l \||\bmf{\theta}^{k}|\|_0^2 \leq C h^4 , 
\end{equation*}
which implies that $\|\bmf{\theta}^{l}\|_{\bmf{L}^2} \leq C h^2$.

As for the $\bmf{H}^1$ estimation, taking $\bmf{v}_h = \partial_t \bmf{\theta}^{l+1}$ in \eqref{thm37-theta-eq} yields
	\begin{equation}
    \begin{aligned}
    	&\||\partial_t\bmf{\theta}^{l+1}|\|_0^2
		+
		\frac{\alpha}{2}\partial_t\|\bmf{\theta}^{l+1}\|_{\bmf{H}^1}^2 \\
		\leq&
        C h\|\bmf{\theta}^{l+1}\|_{\bmf{H}^1}\|\partial_t\bmf{\theta}^{l+1}\|_{\bmf{H}^1}
        + C(\|\bmf{\rho}^{l}\|_{\bmf{L}^2}+\|\bmf{\theta}^l\|_{\bmf{L}^2})\|\partial_t\bmf{\theta}^{l+1}\|_{\bmf{H}^1} \\
        & + C\|\nabla\bmf{m}^l\|_{\bmf{L}^{\infty}}\|\bmf{\theta}^{l+1}\|_{\bmf{L}^2}\|\partial_t \bmf{\theta}^{l+1}\|_{\bmf{L}^2} \\
        \leq&
        C \|\bmf{\theta}^{l+1}\|_{\bmf{H}^1}\|\partial_t\bmf{\theta}^{l+1}\|_{\bmf{L}^2} +
        Ch^{-1} (\|\bmf{\rho}^{l}\|_{\bmf{L}^2}+\|\bmf{\theta}^l\|_{\bmf{L}^2})\|\partial_t\bmf{\theta}^{l+1}\|_{\bmf{L}^2} \\
        & + C \|\bmf{\theta}^{l+1}\|_{\bmf{L}^2}\|\partial_t \bmf{\theta}^{l+1}\|_{\bmf{L}^2} \\
        \leq&
        C \|\bmf{\theta}^{l+1}\|_{\bmf{H}^1}\||\partial_t\bmf{\theta}^{l+1}|\|_{0} +
        Ch^{-1} (\|\bmf{\rho}^{l}\|_{\bmf{L}^2}+\|\bmf{\theta}^l\|_{\bmf{L}^2})\||\partial_t\bmf{\theta}^{l+1}|\|_{0} \\
        & + C \|\bmf{\theta}^{l+1}\|_{\bmf{L}^2}\||\partial_t \bmf{\theta}^{l+1}|\|_{0} \\
        \leq &
        \frac{C}{2\epsilon_1} \|\bmf{\theta}^{l+1}\|_{\bmf{H}^1}^2 + \frac{\epsilon_1}{2}\||\partial_t\bmf{\theta}^{l+1}|\|_{0}^2 +
        \frac{C}{2\epsilon
        _2} (h^{-2}\|\bmf{\rho}^{l}\|_{\bmf{L}^2}^2 + \|\bmf{\theta}^l\|_{\bmf{H}^1}^2) \\
        & + \frac{\epsilon_2}{2} \||\partial_t\bmf{\theta}^{l+1}|\|_{0}^2 + \frac{C}{2\epsilon_3}\|\bmf{\theta}^{l+1}\|_{\bmf{L}^2}^2 + \frac{\epsilon_3}{2}\||\partial_t \bmf{\theta}^{l+1}|\|_{0}^2,
    \end{aligned}
		\label{thm37-basic-energy-1}
	\end{equation}
    in which the H\"older inequality, Young’s inequality and inverse inequality are used. Then, choose $\epsilon_1 + \epsilon_2 + \epsilon_3 = 2$ and we arrive at
    \begin{align*}
        \frac{\alpha}{2}\partial_t\|\bmf{\theta}^{l+1}\|_{\bmf{H}^1}^2
        \leq & C\big(  \|\bmf{\theta}^{l+1}\|_{\bmf{H}^1}^2 +
         h^{-2}\|\bmf{\rho}^{l}\|_{\bmf{L}^2}^2 + \|\bmf{\theta}^l\|_{\bmf{H}^1}^2 \big) \\
         \leq &
         C\big( h^2 + \|\bmf{\theta}^{l+1}\|_{\bmf{H}^1}^2 +
         \|\bmf{\theta}^l\|_{\bmf{H}^1}^2 \big).
    \end{align*}
    Taking summation with respect to $l$, and using Gronwall's lemma yield
\begin{equation*}
\sum_{k=0}^l \|\bmf{\theta}^{k}\|_{\bmf{H}^1}^2 \leq e^{CT}(Ch^4 + C\|\bmf{\theta}^0\|_{H^1}^2). 
\end{equation*}
Owing to $\bmf{\theta}^0=0$, 
\begin{equation*}
\sum_{k=0}^l \|\bmf{\theta}^{k}\|_{\bmf{H}^1}^2 \leq C h^2 , 
\end{equation*}
which also implies $\|\bmf{\theta}^{l}\|_{\bmf{H}^1} \leq C h$.

Next, using the triangle inequality produces
\begin{equation*}
\begin{aligned}
\|\bmf{m} - \bmf{m}_h\|_{L^2}
&\leq \|\bmf{m} - \tilde{\bmf{m}}^l\|_{\bmf{L}^2}
+ \|\tilde{\bmf{m}}^l - \tilde{\bmf{m}}_h^l\|_{\bmf{L}^2}
+ \|\tilde{\bmf{m}}_h^l - \bmf{m}_h\|_{\bmf{L}^2},
\\
\|\bmf{m} - \bmf{m}_h\|_{\bmf{H}^1}
&\leq \|\bmf{m} - \tilde{\bmf{m}}^l\|_{\bmf{H}^1}
+ \|\tilde{\bmf{m}}^l - \tilde{\bmf{m}}_h^l\|_{\bmf{H}^1}
+ \|\tilde{\bmf{m}}_h^l - \bmf{m}_h\|_{\bmf{H}^1}.
\end{aligned}    
\end{equation*}
Let $l \to \infty$, and the first and third components of the above equations vanish, while the second terms remain bounded. We therefore reach
\begin{equation*}
\|\bmf{m} - \bmf{m}_h\|_{\bmf{L}^2} \le C h^2,
\quad
\|\bmf{m} - \bmf{m}_h\|_{\bmf{H}^1} \le C h.
\end{equation*}
At last, divide the whole interval $[0,T]$ into $N = T/\tau$ subintervals with step length $\tau$. On each subinterval, the initial error is at most $O(h^2)$ in $\bmf{L}^2$ and $O(h)$ in $\bmf{H}^1$ with a uniform constant $C$. By mathematical induction, the error estimates hold true on the entire interval $[0,T]$, which completes the proof.
\end{proof}

\begin{remark}
In many applications, the nonhomogeneous Dirichlet boundary condition $\bmf{m}|_{\partial\Omega} = \bmf{g}$, where $|\bmf{g}| = 1$ in a point-wise sense, is considered.
We emphasize that in this work, the error analysis is conducted for the LL equation and the auxiliary problem introduced with the homogeneous boundary condition.
Nevertheless, this analysis can be extended to the nonhomogeneous case, where the boundary condition is given by the function $\bmf{m}|_{\partial\Omega} = \bmf{g}$, provided that $\bmf{g}$ has sufficient regularity.
\end{remark}

At the end of the error estimate, we remark that the model considered in the above sections is a degenerate form of the full LL equation, since only the exchange field is included in the effective field, and the length constraint is omitted as well. Meanwhile, when the nonlinear problem is analyzed, a fixed-point iteration is introduced, in which the time derivative is discretized using a linearized implicit Euler method in the iteration. Hence, the analysis in this work can also be extended to the fully discretized form.

Next, we consider the energy law based on the FVEM discretization. Since the continuous energy law does not hold for the degenerate equation, we will consider the full LL equation in the sequel.

\subsection{Discretized energy dissipation}
Taking inner product of \eqref{LL4} with $-\bmf{h}$ yields
\begin{align*}
    -(\partial_t\bmf{m}, \bmf{h})
    =
    (\bmf{m} \times \bmf{h}, \bmf{h})
    +
    \alpha(\bmf{m} \times(\bmf{m} \times \bmf{h}), \bmf{h})
    =
    -\alpha(\bmf{m} \times \bmf{h})^2.
\end{align*}
Since
\begin{align*}
    -(\partial_t\bmf{m}, \bmf{h})
    & =
    -\int_{\Omega}\partial_t\bmf{m}\cdot(\epsilon\Delta \bmf{m}-q(m_2\bmf{e_2}+m_3\bmf{e_3})-m_3 \bmf{e_3}+ \bmf{h}_{\mr{e}}) \, \mr{d}x\mr{d}y \\
    & =
     \partial_t F[\bmf{m}] - \int_{\partial\Omega} \partial_t\bmf{m}\cdot(\nabla\bmf{m}\cdot\bmf{n}) \,\mr{d}S,
\end{align*}
the energy dissipation can be maintained when
\begin{equation}
    \int_{\partial\Omega} \partial_t\bmf{m}\cdot(\nabla\bmf{m}\cdot\bmf{n}) \,\mr{d}S = 0. \label{equ:general-boundary-condtion}
\end{equation}
This indicates that the energy dissipation law holds for systems with homogeneous Neumann boundary conditions and time-independent Dirichlet boundary conditions.

To conveniently illustrate the discrete energy dissipation by the FVEM, we rewrite the LL equation \eqref{LL4} into the Landau-Lifshitz-Gilbert (LLG) form
\begin{equation*}
\alpha \,\partial_t \bmf{m} + \bmf{m} \times \partial_t \bmf{m} - (1+\alpha^2)\bmf{h}
= -(1+\alpha^2) (\bmf{m}\cdot\bmf{h})\bmf{m}.
    % \label{LL-equivalent}
\end{equation*}
Define the discrete norm
\begin{equation}
\||\bmf{m}_h|\|_h^2 := \sum_{K_Q \in T_h} |\bmf{m}_h|_{\bmf{H}^1(K_Q)}^2,
    \label{discrete-norm}
\end{equation}
the magnetic free energy is therefore discretized into
\begin{equation*}
F[\bmf{m}_h]
=
\frac{\epsilon}{2} \||\bmf{m}_h|\|_h^2
+
(\hat{F}(\bmf{m}_h),1),
\end{equation*}
where
\begin{equation*}
    \hat{F}(\bmf{m}_h) = \frac{q}{2} (|\bmf{m}_{2h}|^2 +|\bmf{m}_{3h}|^2)
-
\bmf{h}_e\cdot \bmf{m}_h
+
\frac{1}{2} |\bmf{m}_{3h}|^2.
\end{equation*}
The numerical solution $\bmf{m}_h$ can also be solved from the weak form of the LLG equation
\begin{align}
&\alpha(\partial_t\bmf{m}_h, I_h^*\bmf{v}_h)
 +
 (\bmf{m}_h \times \partial_t \bmf{m}_h, I_h^*\bmf{v}_h)
 +
 \epsilon(1+\alpha^2) (c_h(\bmf{m}_h, I_h^*\bmf{v}_h)
 -d_h(\bmf{m}_h,I_h^*\bmf{v}_h)) \nonumber\\
 = &
(1+\alpha^2)[(\epsilon|\nabla\bmf{m}_h|^2\bmf{m}_h,I_h^*\bmf{v}_h)
 +
 (\hat{\bmf{f}}(\bmf{m}_h)
 \!-\!
 (\bmf{m}_h\!\cdot\!\hat{\bmf{f}}(\bmf{m}_h))\bmf{m}_h,I_h^*\bmf{v}_h)],
\label{equ:LL-semi-discretized-energy}
\end{align}
where
\begin{equation*}
 c_h(\bmf{m}_h,I_h^*\bmf{v}_h)
 =
 - \sum_{\bmf{x}_i\in\mathcal{N}_h^0}\!\int_{\partial V_i}\nabla \bmf{m}_h\cdot\bmf{n}I_h^*\bmf{v}_h \,\mr{d}S
 =
 - \sum_{\bmf{x}_i\in\mathcal{N}_h^0}\bmf{v}_h(\bmf{x}_i)\int_{\partial
V_i}\nabla \bmf{m}_h\cdot\bmf{n}\,\mr{d}S,
\end{equation*}
\begin{equation*}
    d_h(\bmf{m}_h,I_h^*\bmf{v}_h) = \int_{\partial \Omega}\nabla\bmf{m}_h\cdot\bmf{n}I_h^*\bmf{v}_h\,\mr{d}S,
\end{equation*}
and
\begin{equation*}
   \hat{\bmf{f}}(\bmf{m}_h) = -q(m_{2h}\bmf{e_2}+m_{3h}\bmf{e_3})-m_{3h} \bmf{e_3}+ \bmf{h}_{\mr{e}}.
\end{equation*}
For the term $c_h(\bmf{m}_h,I_h^*\bmf{v}_h)$, we have the following estimate.

\begin{lemma}
For the sufficiently small $h$, there exists a positive constant $C$ such that
for all $\bmf{m}_h, \partial_t\bmf{m}_h\in \bmf{S}_h$, it holds
\begin{equation*}
    c_h(\bmf{m}_h,I_h^*\partial_t\bmf{m}_h)
    \geq
    C \partial_t \||\bmf{m}_h|\|_{h}^2.
    % \label{ah-t}
\end{equation*}
\label{lem-t}
\end{lemma}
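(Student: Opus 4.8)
The plan is to exploit the fact that $c_h$ is the generalized-difference discretization of the \emph{pure} Laplacian and therefore collapses, on piecewise linear trial functions, to the standard finite element stiffness form. Concretely, I would first establish the identity
$$ c_h(\bmf{m}_h, I_h^*\bmf{w}_h) = \int_\Omega \nabla\bmf{m}_h \cdot \nabla\bmf{w}_h \, \mr{d}x\mr{d}y, \qquad \forall\, \bmf{m}_h,\bmf{w}_h \in \bmf{S}_h. $$
This is the classical coincidence between the linear FVEM and the linear FEM for the Laplacian: working component by component and triangle by triangle, the flux $-\int_{\partial V_i}\nabla\phi_j\cdot\bmf{n}\,\mr{d}S$ of a nodal basis function through the dual interface built from the edge midpoints $M_k$ and the interior points $Q_k$ of \Cref{Fig1} reproduces exactly the stiffness entry $\int_\Omega \nabla\phi_i\cdot\nabla\phi_j\,\mr{d}x\mr{d}y$; summing over the three vector components yields the displayed equality. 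The key structural consequence is that $c_h(\cdot, I_h^*\cdot)$ is genuinely \emph{symmetric} on $\bmf{S}_h\times\bmf{S}_h$ — a strictly stronger statement than the $O(h)$ near-symmetry recorded in \Cref{lem-Ah1} for the full form $\mathcal{A}_h$, whose asymmetry stems entirely from the cross-product part $b_h$ and is therefore absent in $c_h$.

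Granting this identity, the lemma follows almost immediately. Taking $\bmf{w}_h = \bmf{m}_h$ gives $c_h(\bmf{m}_h, I_h^*\bmf{m}_h) = \|\nabla\bmf{m}_h\|_{\bmf{L}^2}^2 = |||\bmf{m}_h|||_h^2$, the last equality being precisely the definition \eqref{discrete-norm} since $\nabla\bmf{m}_h$ is constant on each triangle. Differentiating in time and invoking the symmetry of $c_h$,
$$ \mr{d}_t\,|||\bmf{m}_h|||_h^2 = \mr{d}_t\,c_h(\bmf{m}_h, I_h^*\bmf{m}_h) = c_h(\partial_t\bmf{m}_h, I_h^*\bmf{m}_h) + c_h(\bmf{m}_h, I_h^*\partial_t\bmf{m}_h) = 2\,c_h(\bmf{m}_h, I_h^*\partial_t\bmf{m}_h), $$
so that $c_h(\bmf{m}_h, I_h^*\partial_t\bmf{m}_h) = \tfrac12\,\mr{d}_t|||\bmf{m}_h|||_h^2$. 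This is the assertion with $C=\tfrac12$, in fact with equality.

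The only substantive step is thus the first one, and I expect the exact symmetry of $c_h$ to be the crux. If one wished instead to work on a general quasi-uniform dual mesh for which the FVEM/FEM coincidence holds only up to an $O(h)$ perturbation, the decomposition
$$ c_h(\bmf{m}_h, I_h^*\partial_t\bmf{m}_h) = \tfrac12\,\mr{d}_t\,c_h(\bmf{m}_h, I_h^*\bmf{m}_h) + \tfrac12\bigl[c_h(\bmf{m}_h, I_h^*\partial_t\bmf{m}_h) - c_h(\partial_t\bmf{m}_h, I_h^*\bmf{m}_h)\bigr] $$
would leave an antisymmetric remainder carrying a factor $\|\partial_t\bmf{m}_h\|_{\bmf{H}^1}$, which is not the time-derivative of any energy and cannot be bounded cleanly by $\mr{d}_t|||\bmf{m}_h|||_h^2$. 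For this reason I would anchor the proof on the \emph{exact} symmetry of the Laplacian FVEM form rather than on the perturbative bound of \Cref{lem-Ah1}; for the midpoint/barycenter dual mesh of \Cref{Fig1} this exactness is available, the smallness of $h$ is needed only to preserve the ambient hypotheses of the preceding lemmas, and one recovers the stated inequality with $C=\tfrac12$.
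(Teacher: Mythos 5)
Your proposal is correct, and it rests on exactly the same computational core as the paper's proof, just packaged one level higher. The paper (Appendix E) works element by element: using the per-element constancy of $\nabla\bmf{m}_h$ and the representation \eqref{triangle_area}, it evaluates the dual-interface fluxes and finds
\begin{equation*}
I_{K_Q}(\bmf{m}_h,I_h^*\partial_t\bmf{m}_h)
\;\propto\;
S_{K_Q}\left[\frac{\partial \bmf{m}_h}{\partial x}\cdot\partial_t\Bigl(\frac{\partial \bmf{m}_h}{\partial x}\Bigr)+\frac{\partial \bmf{m}_h}{\partial y}\cdot\partial_t\Bigl(\frac{\partial \bmf{m}_h}{\partial y}\Bigr)\right]
=
\tfrac{1}{2}\,\mr{d}_t\,\abs{\bmf{m}_h}_{\bmf{H}^1(K_Q)}^2,
\end{equation*}
i.e.\ it recognizes the total time derivative directly inside the element computation and sums over $K_Q$. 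You instead first prove the general identity $c_h(\bmf{m}_h,I_h^*\bmf{w}_h)=(\nabla\bmf{m}_h,\nabla\bmf{w}_h)$ for all pairs in $\bmf{S}_h$, and then finish by symmetry plus the product rule. The element-wise flux evaluation needed to establish your identity is literally the paper's computation with $\partial_t\bmf{m}_h$ replaced by a generic $\bmf{w}_h$ (and, as you note, it is insensitive to the location of the interior points $Q$, since the flux of a per-element constant field through the path $M_i \to Q \to M_{i+2}$ depends only on the edge-midpoint endpoints). What your organization buys: the lemma becomes an exact identity with the explicit constant $C=\tfrac12$, the symmetry of $c_h(\cdot,I_h^*\cdot)$ is isolated as the structural reason the argument works, and your observation that the perturbative $O(h)$ near-symmetry of \Cref{lem-Ah1} would \emph{not} suffice (the antisymmetric remainder carries $\|\partial_t\bmf{m}_h\|_{\bmf{H}^1}$ and is not a time derivative) is a genuine insight the paper never states. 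What the paper's version buys: it avoids invoking the FVEM/FEM coincidence as a separate lemma and needs only the single pairing actually used. One small point of care your write-up glosses over: $c_h$ sums only over interior nodes $\mathcal{N}_h^0$, so the identity and the product-rule step should be stated for test functions vanishing on $\partial\Omega$; this is consistent with the time-independent Dirichlet setting in which \Cref{lem-t} is applied, so it is not a gap.
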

The proof is detailed in Appendix \ref{app:lem-ah-t}.
Consequently, we obtain the discrete energy law as follows.
\begin{proposition}
    Denote $\bmf{m}_h^0 \in \bmf{S}_h$ the initial condition. Assume that the boundary condition \eqref{equ:general-boundary-condtion} holds true and the discrete magnetization $\bmf{m}_h$ satisfies $|\bmf{m}_h| = 1$ in a point-wise sense. We then obtain
    $d_tF[\bmf{m}_h] \leq 0$.
    \label{energy-dissipation}
\end{proposition}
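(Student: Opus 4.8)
The plan is to reproduce, at the fully discrete level, the continuous computation that opens this subsection, where testing the LLG form against $-\bmf{h}$ (equivalently against $\partial_t\bmf{m}$) produced $\mr{d}_t F \leq 0$. Concretely, I would test the weak LLG scheme \eqref{equ:LL-semi-discretized-energy} with $\bmf{v}_h = \partial_t\bmf{m}_h$. This choice makes the left-hand side generate the damping term $\alpha(\partial_t\bmf{m}_h, I_h^*\partial_t\bmf{m}_h)$, the gyromagnetic (Gilbert) term $(\bmf{m}_h\times\partial_t\bmf{m}_h, I_h^*\partial_t\bmf{m}_h)$, the exchange term $\epsilon(1+\alpha^2)c_h(\bmf{m}_h, I_h^*\partial_t\bmf{m}_h)$ and the boundary term $-\epsilon(1+\alpha^2)d_h(\bmf{m}_h, I_h^*\partial_t\bmf{m}_h)$, while the right-hand side collects the lower-order contributions. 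The target is to show that, after all structural cancellations, this identity collapses to a relation of the form $(1+\alpha^2)\,\mr{d}_t F[\bmf{m}_h] \leq -\alpha\,(\partial_t\bmf{m}_h, I_h^*\partial_t\bmf{m}_h)$.

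First I would dispose of the terms that drop out. The Gilbert term is annihilated by the pointwise orthogonality $(\bmf{m}_h\times\partial_t\bmf{m}_h)\cdot\partial_t\bmf{m}_h = 0$, read through the nodal control-volume pairing $(\cdot, I_h^*\cdot)$. The hypothesis $|\bmf{m}_h| = 1$ in the point-wise (nodal) sense gives $\bmf{m}_h\cdot\partial_t\bmf{m}_h = 0$ at every node, which kills both the $(\epsilon|\nabla\bmf{m}_h|^2\bmf{m}_h, I_h^*\partial_t\bmf{m}_h)$ term and the $((\bmf{m}_h\cdot\hat{\bmf{f}}(\bmf{m}_h))\bmf{m}_h, I_h^*\partial_t\bmf{m}_h)$ term on the right. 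The boundary term $d_h(\bmf{m}_h, I_h^*\partial_t\bmf{m}_h) = \int_{\partial\Omega}\nabla\bmf{m}_h\cdot\bmf{n}\,I_h^*\partial_t\bmf{m}_h\,\mr{d}S$ vanishes under the admissibility condition \eqref{equ:general-boundary-condtion} (time-independent Dirichlet, where $\partial_t\bmf{m}_h|_{\partial\Omega} = \bmf{0}$, or homogeneous Neumann, where $\nabla\bmf{m}_h\cdot\bmf{n} = \bmf{0}$).

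What remains is $\alpha(\partial_t\bmf{m}_h, I_h^*\partial_t\bmf{m}_h) + \epsilon(1+\alpha^2)c_h(\bmf{m}_h, I_h^*\partial_t\bmf{m}_h) = (1+\alpha^2)(\hat{\bmf{f}}(\bmf{m}_h), I_h^*\partial_t\bmf{m}_h)$. Here I would apply Lemma \ref{lem-t} to identify the exchange contribution with the rate of the discrete exchange energy, $\epsilon\,c_h(\bmf{m}_h, I_h^*\partial_t\bmf{m}_h) \geq \tfrac{\epsilon}{2}\,\mr{d}_t|||\bmf{m}_h|||_h^2$, and observe that $\hat{\bmf{f}}(\bmf{m}_h)$ is precisely the negative gradient of the nonlinear density $\hat{F}$, so that the nodal quadrature and the chain rule give $(\hat{\bmf{f}}(\bmf{m}_h), I_h^*\partial_t\bmf{m}_h) = -\mr{d}_t(\hat{F}(\bmf{m}_h), 1)$. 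Substituting and recalling $F[\bmf{m}_h] = \tfrac{\epsilon}{2}|||\bmf{m}_h|||_h^2 + (\hat{F}(\bmf{m}_h), 1)$ yields
\[
(1+\alpha^2)\,\mr{d}_t F[\bmf{m}_h] \leq -\alpha\,(\partial_t\bmf{m}_h, I_h^*\partial_t\bmf{m}_h) \leq 0,
\]
where the last inequality uses that $(\cdot, I_h^*\cdot)$ is positive semidefinite (a consequence of the norm equivalence $C_*\|\cdot\|_{\bmf{L}^2} \leq |||\cdot|||_0 \leq C^*\|\cdot\|_{\bmf{L}^2}$); since $1+\alpha^2 > 0$, this gives the claim.

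The main obstacle is making the cancellations rigorous in the FVEM setting. Unlike the continuous identities $(\bmf{m}\times\partial_t\bmf{m})\cdot\partial_t\bmf{m} = 0$ and $\bmf{m}\cdot\partial_t\bmf{m} = 0$, which hold pointwise everywhere, the control-volume pairing $(\bmf{f}, I_h^*\bmf{v}_h) = \sum_i \bmf{v}_h(\bmf{x}_i)\cdot\int_{V_i}\bmf{f}\,\mr{d}x$ only samples the test function at the nodes, and the genuine integral $\int_{V_i}\bmf{m}_h\times\partial_t\bmf{m}_h\,\mr{d}x$ is \emph{not} equal to $\mathrm{mean}(V_i)\,(\bmf{m}_h\times\partial_t\bmf{m}_h)(\bmf{x}_i)$ for piecewise-linear $\bmf{m}_h$. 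Hence the vanishing of the Gilbert and constraint terms must be carried out through the lumped nodal quadrature that the finite-volume formulation prescribes for the zeroth-order (non-flux) terms, and one must verify that this quadrature is consistent both with the discrete energy $F[\bmf{m}_h]$ and with Lemma \ref{lem-t}. I expect this reconciliation of the exact control-volume integrals with the nodal evaluations that force the orthogonality cancellations to be the delicate point; the remainder is a direct transcription of the continuous energy estimate.
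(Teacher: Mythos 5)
Your proposal is correct and follows essentially the same route as the paper's proof: test \eqref{equ:LL-semi-discretized-energy} with $\bmf{v}_h=\partial_t\bmf{m}_h$, cancel the gyromagnetic and constraint terms via the point-wise constraint $|\bmf{m}_h|=1$, bound the exchange term below through Lemma \ref{lem-t}, identify $(\hat{\bmf{f}}(\bmf{m}_h),I_h^*\partial_t\bmf{m}_h)$ with the rate of the lower-order part of $F[\bmf{m}_h]$, and discard $d_h(\bmf{m}_h,I_h^*\partial_t\bmf{m}_h)$ using the boundary condition \eqref{equ:general-boundary-condtion}. The lumped-quadrature subtlety you flag at the end is genuine, but the paper's own proof passes over it in exactly the same way (``it is obvious that''), so your argument is faithful to, and no less rigorous than, the original.
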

\begin{proof}
Choose $\bmf{v}_h = \partial_t \bmf{m}_h$ in \eqref{equ:LL-semi-discretized-energy}, and
we have
\begin{equation*}
\begin{aligned}
&\alpha(\partial_t\bmf{m}_h, I_h^*\partial_t \bmf{m}_h)
 +
 (\bmf{m}_h \times \partial_t \bmf{m}_h, I_h^*\partial_t \bmf{m}_h)
 +
 \epsilon(1+\alpha^2) (c_h(\bmf{m}_h, I_h^*\partial_t \bmf{m}_h)-d_h(\bmf{m}_h,I_h^*\partial_t \bmf{m}_h))\\
 = &
(1+\alpha^2)[(\epsilon|\nabla\bmf{m}_h|^2\bmf{m}_h,I_h^*\partial_t \bmf{m}_h)
 +
 (\hat{\bmf{f}}(\bmf{m}_h)
 \!-\!
 (\bmf{m}_h\!\cdot\!\hat{\bmf{f}}(\bmf{m}_h))\bmf{m}_h,I_h^*\partial_t \bmf{m}_h)].
\end{aligned}
\end{equation*}
Owing to the Lemma \ref{lem-t} and the fact that $\bmf{m}_h \cdot \partial_t \bmf{m}_h =0$, it is obvious that
\begin{equation*}
\alpha\|\partial_t\bmf{m}_h\|_{\bmf{L}^2}^2
+
 \epsilon(1+\alpha^2) \partial_t \||\bmf{m}_h|\|_{h}^2
 \leq
 (1+\alpha^2)((\hat{\bmf{f}}(\bmf{m}_h),I_h^*\partial_t \bmf{m}_h)+d_h(\bmf{m}_h,I_h^*\partial_t \bmf{m}_h)).
\end{equation*}
According to the definition of discrete energy,
we get
\begin{equation*}
 \partial_t F[\bmf{m}_h]
=
\epsilon \partial_t \||\bmf{m}_h|\|_h^2
-
(\bmf{\hat{f}}(\bmf{m}_h),I_h^*\partial_t\bmf{m}_h),
\end{equation*}
which further induces that
\begin{equation*}
 \partial_t F[\bmf{m}_h] \leq - \frac{\alpha}{1+\alpha^2} \|\partial_t\bmf{m}_h\|_{\bmf{L}^2}^2
 +
 d_h(\bmf{m}_h,I_h^*\partial_t \bmf{m}_h) \leq 0.
\end{equation*}
This completes the proof of the discrete energy dissipation law.
\end{proof}

\section{Temporal discretization and numerical experiments\label{sec4: Temporal discretization and numerical experiments}}
\label{sec4}

Among the currently available time-marching methods for the LL equation, one of the most efficient is the GSPM proposed in 2001.
In the first-order accurate version,
only heat equations with constant coefficients are solved per time step.
This gives advantages in the application of FEM and FVEM.

\subsection{Temporal discretization}
GSPM solves the LL equation~\eqref{LL4} into the following procedures:
\begin{align}
    &\partial_t\bmf{m}^* = \bmf{h},
    \label{equ:GSPM_step1} \\
    &\partial_t\bmf{m} =
    -\bmf{m}\times\partial_t\bmf{m}^*
    - \alpha\bmf{m}\times\left(\bmf{m}\times \partial_t\bmf{m}^*\right).
    \label{equ:GSPM_step2}
\end{align}
The components of the nonlinear LL equation are decoupled using the above time-splitting step and are updated in the Gauss-Seidel manner within a time step.

We solve the heat equation discretized by the FVEM:
Find $\bmf{m}_h \in \bmf{S}_h$ for control volume $V_i$ such that
\begin{equation}
    \int_{V_i} \partial_t\bmf{m}^* \, \mr{d}x\mr{d}y
    - \epsilon\int_{\partial V_i}\nabla\bmf{m}_h\cdot \bmf{n}_i \, \mr{d}S
    = \int_{V_i}\hat{\bmf{f}}(\bmf{m}_h) \, \mr{d}x\mr{d}y,
    \label{equ:FVEM-heat-equ}
\end{equation}
where $\bmf{n}_i$ is the unit outward normal vector of $V_i$.
Multiplying \eqref{equ:FVEM-heat-equ} by $\bmf{v}_h(\bmf{x}_i)$ and taking summation of $\bmf{x}_i\in\mathcal{N}_h$ yields
\begin{equation}
 \left( \partial_t\bmf{m}_h,I_h^*\bmf{v}_h\right)
 +
 a(\bmf{m}_h,I_h^*\bmf{v}_h)
 =
 (\hat{\bmf{f}}(\bmf{m}_h), I_h^*\bmf{v}_h),
\label{2.8}
\end{equation}
where
\begin{equation}
 a(\bmf{m}_h,I_h^*\bmf{v}_h)
 =
 -\epsilon \!\sum_{\bmf{x}_i\in\mathcal{N}_h}\!\int_{\partial V_i}\!(\nabla \bmf{m}_h)\cdot\bmf{n}I_h^*\bmf{v}_h \, \mr{d}S
 =
 - \epsilon \!\sum_{\bmf{x}_i\in\mathcal{N}_h}\bmf{v}_h(\bmf{x}_i)\int_{\partial V_i}\!(\nabla \bmf{m}_h)\cdot\bmf{n} \, \mr{d}S.
\label{2.9}
\end{equation}

The semi-implicit Euler method applied, the approach that combines the GSPM and FVEM (GSPM-FVEM) is:
\begin{itemize}
    \item Find $ \bmf{m}_h^{*} \in \bmf{S}_h $ such that
\begin{align}
    &\left(\frac{{m}_{i}^* - {m}_{i}^n}{\Delta t},I_h^*{v}_{h}\right)
    + \epsilon a({m}_{i}^*,I_h^*{v}_{h}) = (\hat{f}_i(\bmf{m}_{h}^n),I_h^*{v}_{h}), \quad i = 1,2,3.
    \label{equ:GSPM1}
\end{align}
\item Evolve the ODE with the Gauss-Seidel update,
\begin{align}
\hat{m}_{1}^{n+1}
&=
{m}_{1}^{n}\!
-\!({m}_{2}^{n}{m}_{3}^{*}\!-\!{m}_{3}^{n}{m}_{2}^{*})\!
-\!\alpha({m}_{1}^{n}{m}_{1}^{*}\!+\!{m}_{2}^{n}{m}_{2}^{*}\!+\!{m}_{3}^{n}{m}_{3}^{*}){m}_{1}^{n}\!
+\!\alpha {m}_{1}^{*}, \nonumber\\
\hat{m}_{2}^{n+1}
&=
{m}_{2}^{n}\!
-\!({m}_{3}^{n}{g}_{1}\!-\!\hat{m}_{1}^{n+1}{m}_{3}^{*})\!
-\!\alpha(\hat{m}_{1}^{n+1}{g}_{1}\!+\!{m}_{2}^{n}{m}_{2}^{*}\!+\!{m}_{3}^{n}{m}_{3}^{*}){m}_{2}^{n}\!
+\!\alpha {m}_{2}^{*}, \label{2.17}\\
\hat{m}_{3}^{n+1}
&=
{m}_{3}^{n}\!
-\!(\hat{m}_{1}^{n+1}{g}_2\!-\!\hat{m}_{2}^{n+1}{g}_1)\!
-\!\alpha(\hat{m}_{1}^{n+1}{g}_1\!+\!\hat{m}_{2}^{n+1}{g}_2\!+\!{m}_{3}^{n}{m}_{3}^{*}){m}_{3}^{n}\!
+\!\alpha {m}_{3}^{*}, \nonumber
\end{align}
where $ {g}_1$ and $ {g}_2 $ are the solutions to
\begin{equation}\label{equ:GSPM1-2}
    \left(\frac{{g}_i-{m}_{i}^{n+1}}{\Delta t},I_h^{*}v_{h}\right)
    +
    \epsilon a({g}_{i},I_h^{*}v_{h})
    =
    (\hat{f}_i(\bmf{m}_{h}^{*}),I_h^{*}v_{h}),
    \quad i=1,2.
\end{equation}
\item Projection onto $\mathcal{S}^2$,
\begin{equation}
    \bmf{m}^{n+1}
    =
    \frac{\bmf{\hat{m}_h}}{|\bmf{\hat{m}_h}|}.
\end{equation}
\end{itemize}
As shown in the above algorithm, the heat equations with constant coefficients are solved at each time step in \eqref{equ:GSPM1} and \eqref{equ:GSPM1-2}, in which the degrees of freedom (dofs) are $N$. Meanwhile, we consider the linearized backward Euler method (such as \eqref{equ:LL-linearized}) for time-stepping and the FVEM in space, a discrete system with variable coefficients and $3N$ dofs must be solved at each step. The latter approach usually requires more computational memory and time. Additionally, the GSPM allows for fast computational methods such as the Fast Fourier Transform.

\begin{remark}
    It should be emphasized that,
    although the underlying model is the degenerate LL equation,
    the primary focus of this work is the application and analysis of the FVEM.
    Meanwhile, to reach a fast simulation,
    the first-order GSPM is adopted for time discretization.
    The scheme is unconditionally stable,
    and decouples the system into simpler and more stable subproblems \cite{Li,GSPM2001JCP}.
    The second-order version has also been recently developed, in which biharmonic equations must be solved \cite{li2024enhanced}.
    Hence, the application of the second-order GSPM requires a high-order FVEM, which will be investigated in the near future.
\end{remark}

\subsection{Accuracy check}
Here we exhibit numerical examples to verify the convergence rate of the FVEM approximation.
We consider the equation
\begin{equation}
    \partial_t\bmf{m}
    =
    -\bmf{m}\times\Delta\bmf{m}
    - \alpha\bmf{m}\times(\bmf{m}\times\Delta\bmf{m}) + \bmf{f}_{\mr{source}},
    \label{equ:LL-equ-sorce}
\end{equation}
where $\bmf{f}_{\mr{source}}$ is the source term provided by the exact solution, i.e., $\bmf{f}_{\mr{source}} = \bmf{m}_t + \bmf{m} \times \Delta \bmf{m} + \alpha \bmf{m} \times (\bmf{m} \times \Delta \bmf{m})$.

\begin{example}
Consider the exact solution
$$\bmf{m}=(\sin(x)\cos(y+t), \cos(x)\cos(y+t), \sin(y+t))^T$$ over the domain $\Omega = [0, 1]^2$.
Notice that the values of the magnetization on boundaries are exactly given, and hence the time-dependent Dirichlet boundary condition is used.
We choose different damping parameters $\alpha = 0.1, 0.05$, and record errors at the terminal time $T = 1.0$ as the temporal step size and spatial mesh size vary in \Cref{Table 5} and \Cref{Table 7}.
\begin{table}[htbh]
  \centering
  \caption{Numerical convergence rates for $\alpha = 0.1$.}
  \label{Table 5}
  %\medskip\small\renewcommand{\arraystretch}{1.15}
  \begin{tabular}{||c|c|c|c|c|c|c||}
  \hline
  \multicolumn{7}{||c||}{$ \Delta t = \Delta x = \Delta y$} \\
\hline
$\Delta t$ & $\|\bmf{m}^n-\bmf{m}_h^n\|_{\bmf{L}^{\infty}}$ & $\mr{order}$ & $\|\bmf{m}^n-\bmf{m}_h^n\|_{\bmf{L}^2}$ & $\mr{order}$ & $\|\bmf{m}^n-\bmf{m}_h^n\|_{\bmf{H}^1}$ & $\mr{order}$ \\
\hline
1/32  & 4.00e-02  &   \quad  &    2.93e-02     &   \quad  &    6.55e-02   &   \quad\\
1/64  & 2.04e-02  &   0.97   &    1.47e-02     &   1.00   &    3.03e-02   &   1.11 \\
1/128 & 1.03e-02  &   0.98   &    7.48e-03     &   0.97   &    1.51e-02   &   1.00 \\
1/256 & 5.02e-03  &   1.04   &    3.33e-03     &   1.16   &    7.19e-03   &   1.07 \\
\hline
\multicolumn{7}{||c||}{$ \Delta t = \Delta x^2 = \Delta y^2$} \\
\hline
  $\Delta t$ & $\|\bmf{m}^n-\bmf{m}_h^n\|_{\bmf{L}^{\infty}}$ & $\mr{order}$ & $\|\bmf{m}^n-\bmf{m}_h^n\|_{\bmf{L}^2}$ & $\mr{order}$ & $\|\bmf{m}^n-\bmf{m}_h^n\|_{\bmf{H}^1}$ & $\mr{order}$ \\
\hline
  1/64  & 2.07e-02 & \quad& 1.55e-02 &  \quad  &  8.44e-02   &   \quad  \\
  1/256 & 5.06e-03 & 2.03 & 3.67e-03 &  2.08   &  4.00e-02   &   1.07   \\
  1/576 & 2.24e-03 & 2.01 & 1.58e-03 &  2.08  &  2.64e-02   &   1.02   \\
  1/1024 & 1.26e-03 & 2.00 & 8.80e-04 &  2.03   &  1.98e-02   &   1.01   \\
\hline
\end{tabular}
\end{table}

\begin{table}[!tbh]
  \centering
  \caption{Numerical convergence rates for $\alpha = 0.05$.}
  \label{Table 7}
  \begin{tabular}{||c| c| c| c| c| c| c||}
  \hline
  \multicolumn{7}{||c||}{$ \Delta t = \Delta x = \Delta y$} \\
\hline
$\Delta t$ & $\|\bmf{m}^n-\bmf{m}_h^n\|_{\bmf{L}^{\infty}}$ & $\mr{order}$ & $\|\bmf{m}^n-\bmf{m}_h^n\|_{\bmf{L}^2}$ & $\mr{order}$ & $\|\bmf{m}^n-\bmf{m}_h^n\|_{\bmf{H}^1}$ & $\mr{order}$\\
\hline
1/32  &  4.24e-02  &   \quad  &   2.58e-02    &   \quad  &    8.04e-02   &   \quad \\
1/64  &  2.00e-02  &   1.08   &   1.24e-02    &   1.05   &    3.86e-02   &   1.06  \\
1/128 &  1.00e-02  &   1.00   &   6.34e-03    &   0.97   &    1.72e-02   &   1.17  \\
1/256 &  4.97e-03  &   1.01   &   3.24e-03    &   0.97   &    7.86e-03   &   1.13  \\
\hline
\multicolumn{7}{||c||}{$ \Delta t = \Delta x^2 = \Delta y^2$} \\
\hline
$\Delta t$ & $\|\bmf{m}^n-\bmf{m}_h^n\|_{\bmf{L}^{\infty}}$ & $\mr{order}$ & $\|\bmf{m}^n-\bmf{m}_h^n\|_{\bmf{L}^2}$ & $\mr{order}$ & $\|\bmf{m}^n-\bmf{m}_h^n\|_{\bmf{H}^1}$ & $\mr{order}$\\
\hline
  1/64   & 2.04e-02 &  \quad  &  1.57e-02  & \quad  &  8.65e-02 &  \quad  \\
  1/256  & 4.95e-03 &  2.04   &  3.37e-03  & 2.21   &  4.01e-02 &   1.11   \\
  1/1024 & 2.19e-03 &  2.01   &  1.27e-03  & 2.41   &  2.64e-02 &   1.13   \\
  1/4096 & 1.23e-04 &  2.01   &  6.77e-04  & 2.19   &  1.98e-02 &   1.00   \\
\hline
\end{tabular}
\end{table}

Furthermore, we fix $\alpha = 0.1$ and compare the CPU time of the GSPM-FVEM and the backward Euler method (BE-FVEM), and the results are depicted in Figure \ref{CPU-time-com}. Figure \ref{CPU-time-com}(a) shows the comparison when the spatial mesh size is fixed at $1/32$ and the temporal size is varied, while Figure \ref{CPU-time-com}(b) shows the comparison when the temporal size is fixed at $1/1024$ and the spatial size is varied. Both plots demonstrate that GSPM-FVEM requires less CPU time than BE-FVEM for comparable accuracy. This demonstrates the computational advantage of the GSPM approach.

\begin{figure}[htbp]
\centering
\subfigure[CPU time versus the numerical error ($\Delta t$)]{
\label{CPU-time-com.sub.1}
\includegraphics[width=0.45\linewidth]{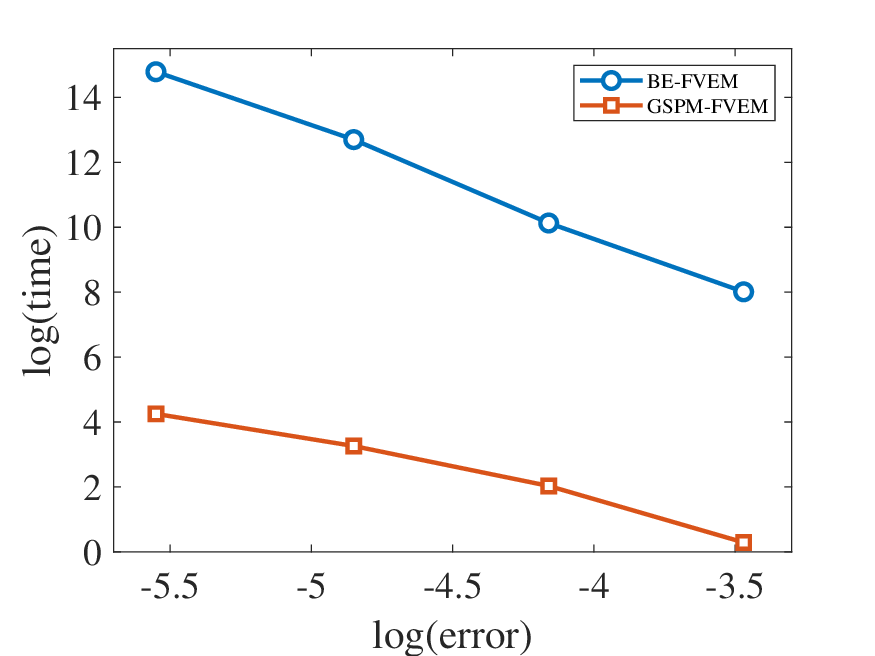}}
\subfigure[CPU time versus the numerical error ($\Delta x$)]{
\label{CPU-time-com.sub.2}
\includegraphics[width=0.45\linewidth]{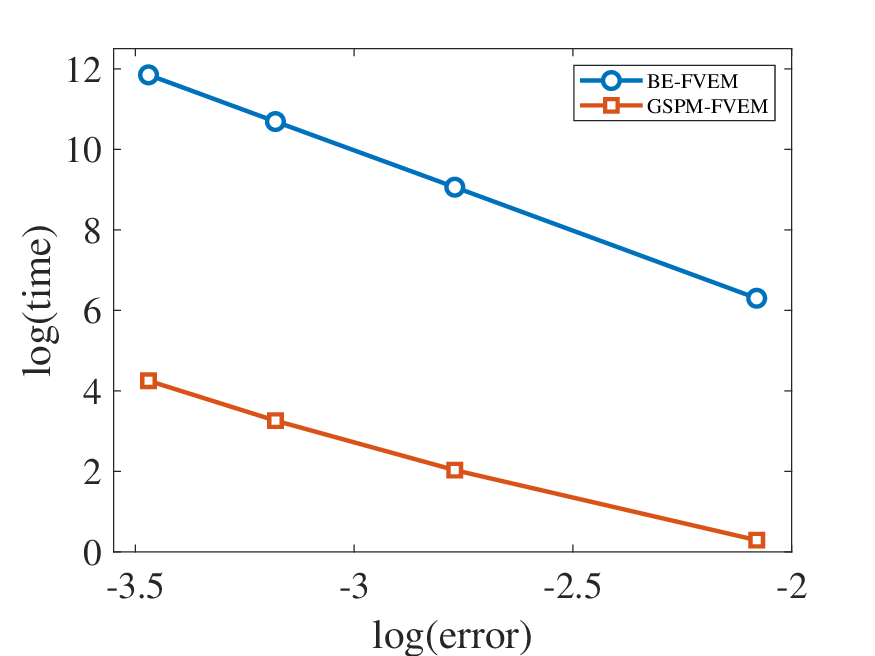}}
\caption{Comparison of CPU time versus the numerical errors for GSPM-FVEM and BE-FVEM.}
\label{CPU-time-com}
\end{figure}
\end{example}

\subsection{Discrete energy laws}
When the GSPM is adopted for the time-marching, the discretized energy law cannot be proved theoretically. Instead, numerical verifications were often conducted. We hereby consider the following two classes of Dirichlet boundary conditions:
\begin{gather}
    \bmf{m} = (\sin(x)\cos(y), \cos(x)\cos(y), \sin(y))^T, \label{equ:boundary-I}\\
    \bmf{m} = (\sin(x)\cos(y+t), \cos(x)\cos(y+t), \sin(y+t))^T.\label{equ:boundary-II}
\end{gather}
The first one satisfies the condition of the energy dissipation law, whereas the second, being time-dependent, does not preserve energy dissipation. The corresponding energy evolutions are depicted numerically as follows.

\begin{example}
We let $\Omega = [0, 1]^2$ and the mesh size $0.2\times 0.2$. Evolve the dynamics to the terminal time $T = 10$ with a time step size $0.1$. For the consistency of the model at $t = 0$, we adopt the initial condition $\bmf{m}_0 = (\sin(x)\cos(y), \cos(x)\cos(y), \sin(y))^T$, and fix the damping parameter $\alpha = 0.1$.
Numerical energy behaviors are recorded as in \Cref{fig4}\,(a)-(b).
\begin{figure}[htbp]
\centering
\subfigure[Boundary condition \eqref{equ:boundary-I}.]{
\label{Fig4.sub.1}
\includegraphics[width=0.45\linewidth]{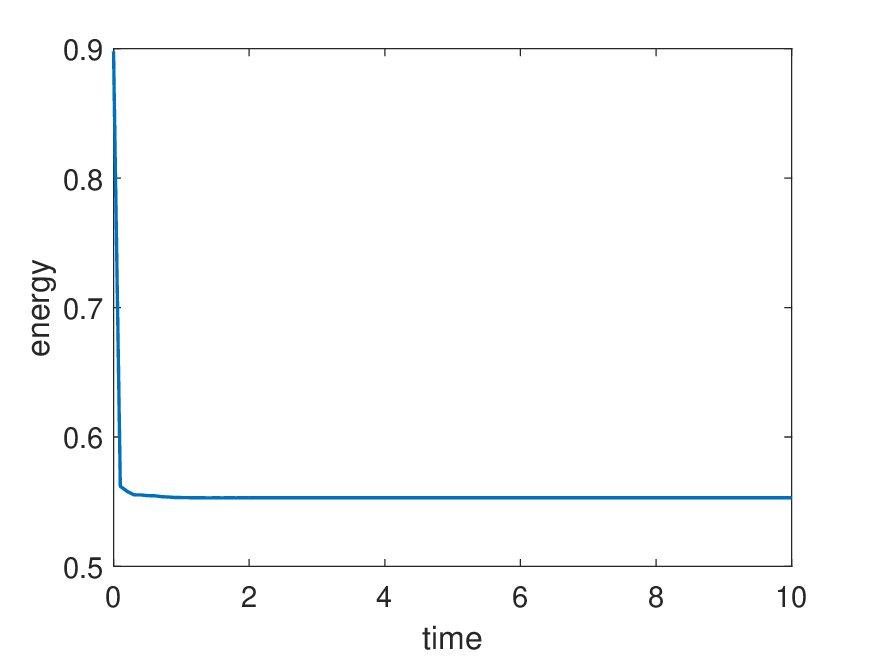}}
\subfigure[Boundary condition \eqref{equ:boundary-II}.]{\label{Fig4.sub.2}
\includegraphics[width=0.45\linewidth]{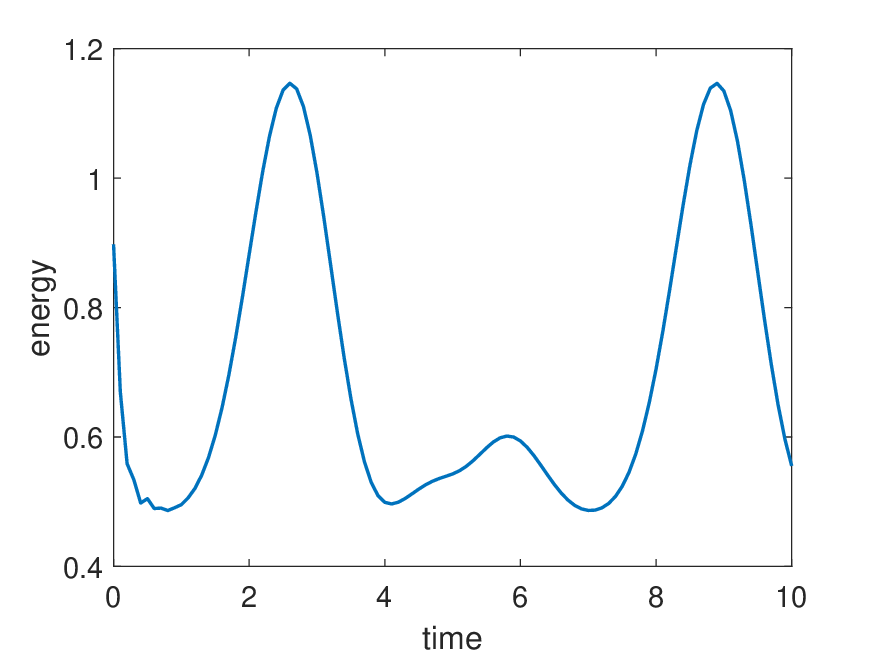}}
\caption{Energy behaviors of the system with different Dirichlet boundary conditions.}
\label{fig4}
\end{figure}
\par
\end{example}

\subsection{Phenomenon of the blow-up}
With a smooth initialization, the solution of the LL equation may blow up in a finite-time evolution.
This interesting phenomenon has been studied in~\cite{an,Bartels,Qing},
where the norm of $\|\nabla\bmf{m}_h\|_{\bmf{L}^{\infty}}$ is the monitor.
On a uniform mesh with mesh size $h$,
the magnitude of $\|\nabla\bmf{m}_h\|_{\bmf{L}^{\infty}}$ is $\mathcal{O}(1/h)$.
Hence one of the feasible approaches to depict the blow-up is observing the direction of magnetization around the origin.
Next, we study this interesting behavior of the LL equation using the proposed method.
We emphasise again that our error analysis relies heavily on the regularity assumption \eqref{regularity-condition}, in which it has assumed $\|\nabla\bmf{m}_h\|_{\bmf{L}^{\infty}}$ to be bounded. This contradiction implies that the error analysis holds before the presence of blow up. On the other hand, this test demonstrates that the proposed approach accurately and effectively studies the intrinsic characteristics of the LL equation.

\begin{example}
Consider $\Omega = [-0.5,0.5]^2$, and the initial condition
\begin{align}
		\bmf{m}_0(\bmf{x})=\left\{
		\begin{aligned}
		    &(0,0,-1)^T, & \quad  |\bmf{x}|\geq 0.5,\\
		    &\left(\frac{2 x A}{A^2+|\bmf{x}|^2},\frac{2 y A}{A^2+|\bmf{x}|^2},\frac{A^2-|\bmf{x}|^2}{A^2+|\bmf{x}|^2}\right)^T,	& \quad \mr{otherwise},
		\end{aligned}\right.
\end{align}
where $A=(1-2|\bmf{x}|^2)^4$.
The Dirichlet boundary condition is fixed and given by the initial condition on boundaries. The other parameters are: $h = \frac{1}{256}$, $\Delta t = $ 1.0e-04 and $\alpha = 1.0$.
Some snapshots are shown in \Cref{fig10}.
Meanwhile, to depict the blow-up,
we also provide close-up pictures of magnetization near the origin at the corresponding time in \Cref{fig11}. Near the origin,
the magnetization $\bmf{m}_h$ turns down to $(0,0,-1)^T$,
while the magnetization at origin maintains $(0,0,1)^T$, which indicates the singularity of $\nabla\bmf{m}_h$ at the origin.
Furthermore,
we record the evolution of the system's energy and $\|\nabla\bmf{m}_h\|_{\bmf{L}^{\infty}}$ in \Cref{fig12} to verify the observation.
\begin{figure}[htbp]
\centering
\subfigure[$t = 0$.]{
\label{Fig10.sub.1}
\includegraphics[width=4.0cm]{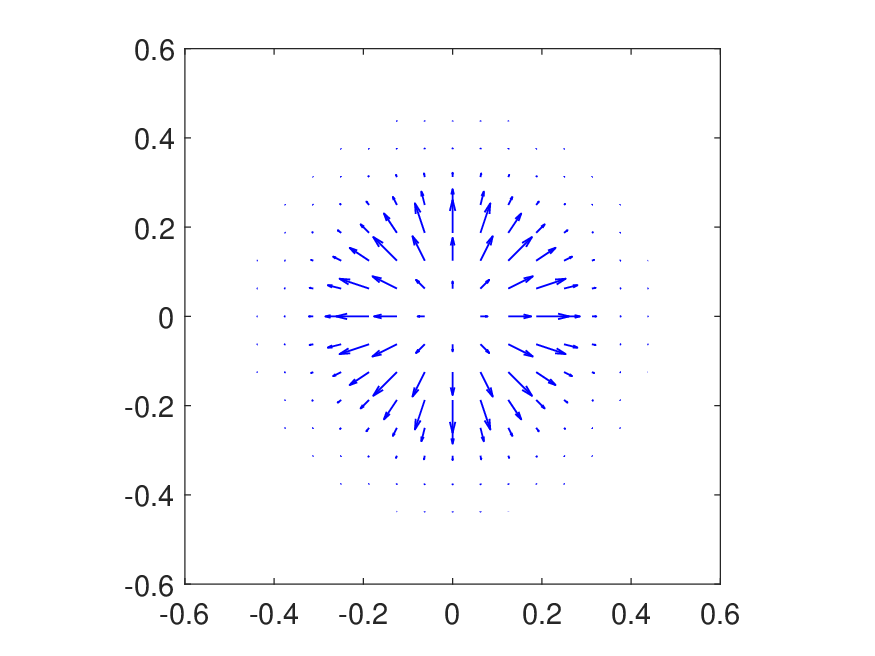}}
\hspace{-10mm}
\subfigure[$t = 0.001$.]{
\label{Fig10.sub.2}
\includegraphics[width=4.0cm]{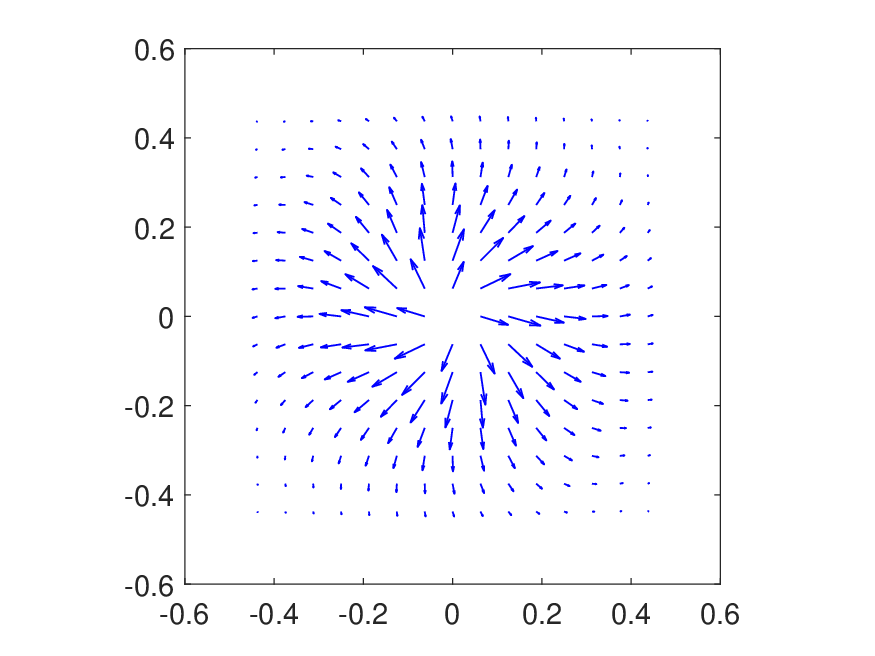}}
\hspace{-10mm}
\subfigure[$t = 0.05$.]{\label{Fig10.sub.4}
\includegraphics[width=4.0cm]{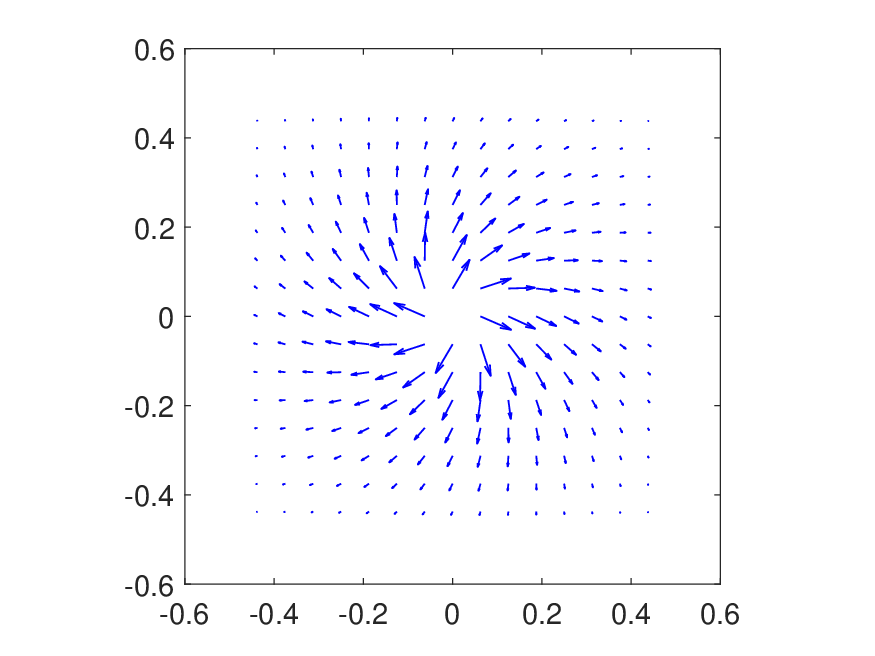}}
\hspace{-10mm}
\subfigure[$t = 0.1$.]{\label{Fig10.sub.5}
\includegraphics[width=4.0cm]{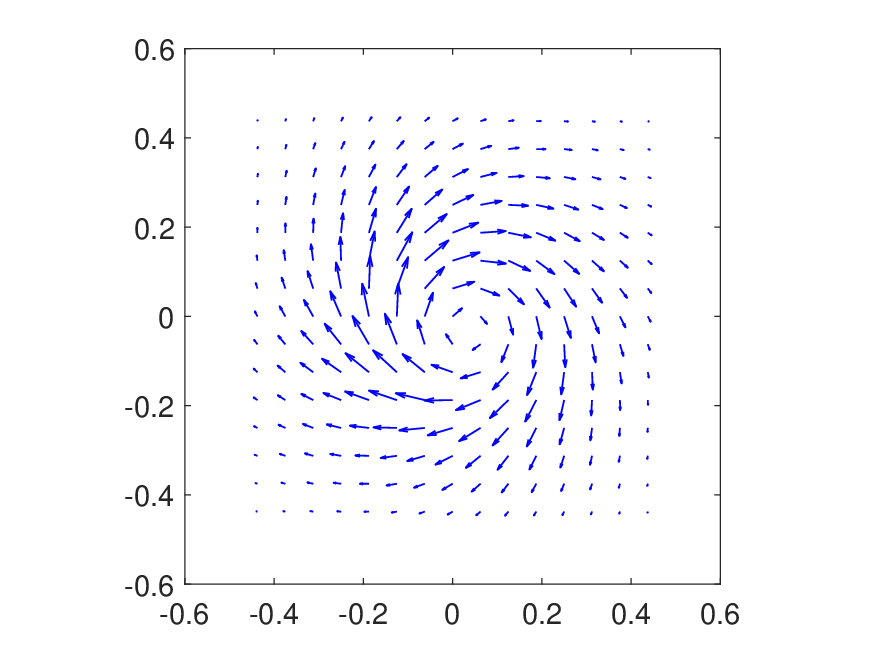}}
\\
\subfigure[$t = 0.2$.]{
\label{Fig10.sub.6}
\includegraphics[width=4.0cm]{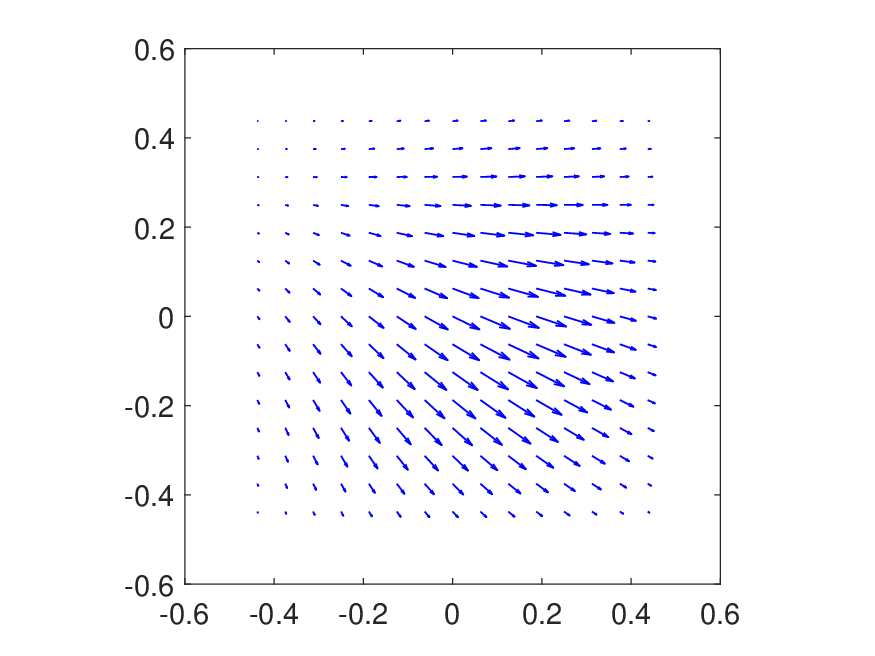}}
\hspace{-10mm}
\subfigure[ $ t = 0.4 $.]{\label{Fig10.sub.7}
\includegraphics[width=4.0cm]{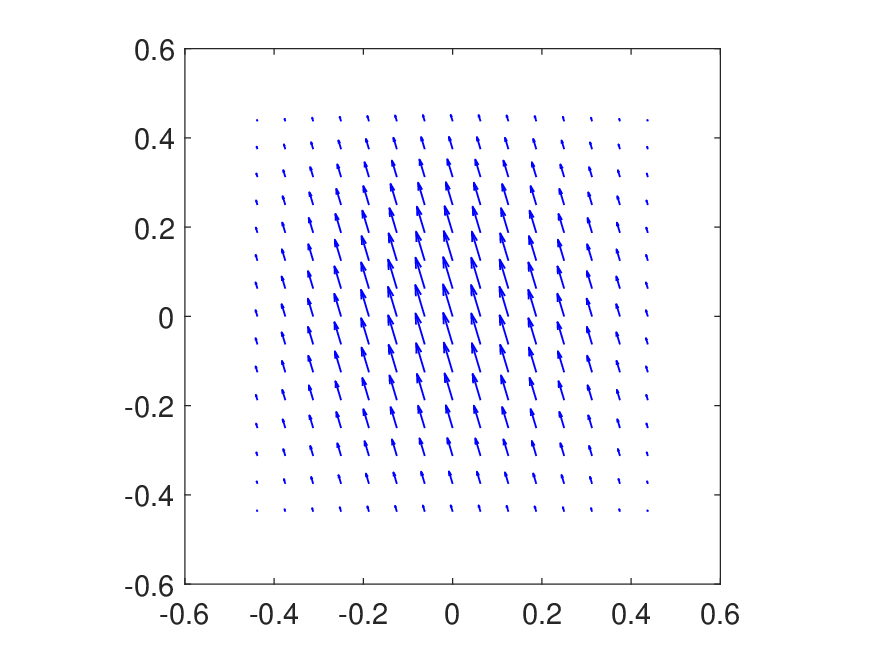}}
\hspace{-10mm}
\subfigure[$t = 0.5$.]{
\label{Fig10.sub.8}
\includegraphics[width=4.0cm]{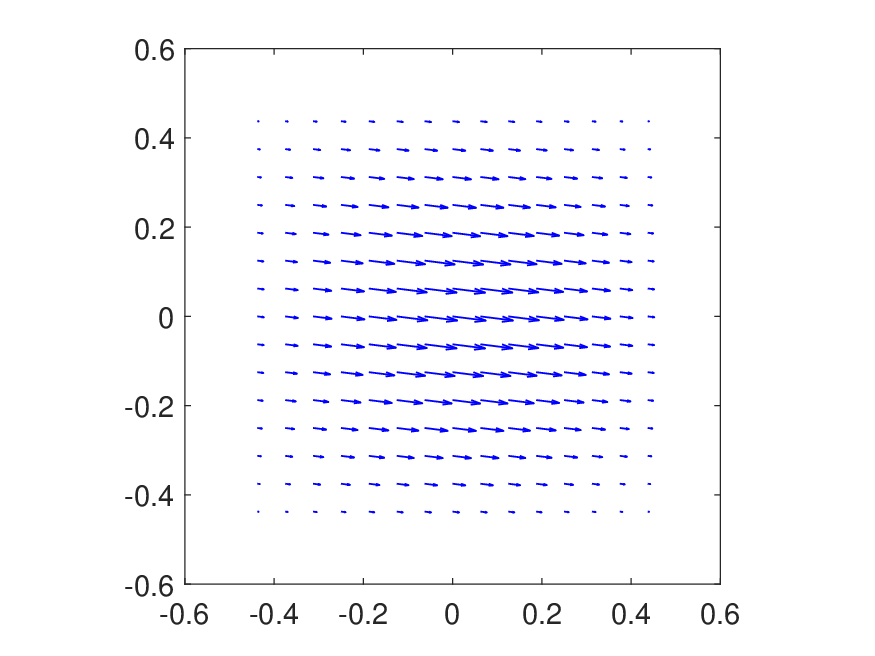}}
\hspace{-10mm}
\subfigure[$t = 0.6$.]{
\label{Fig10.sub.9}
\includegraphics[width=4.0cm]{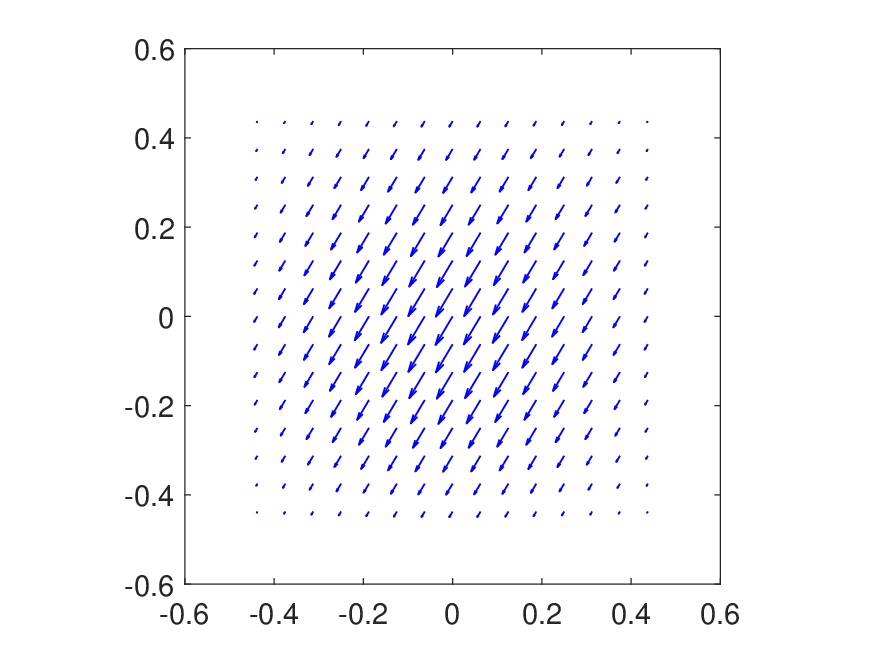}}
\caption{The magnetization textures at different times during the blow-up.}
\label{fig10}
\end{figure}

\begin{figure}[htbp]
\centering
\subfigure[$t = 0$.]{
\label{Fig11.sub.1}
\includegraphics[width=3.3cm]{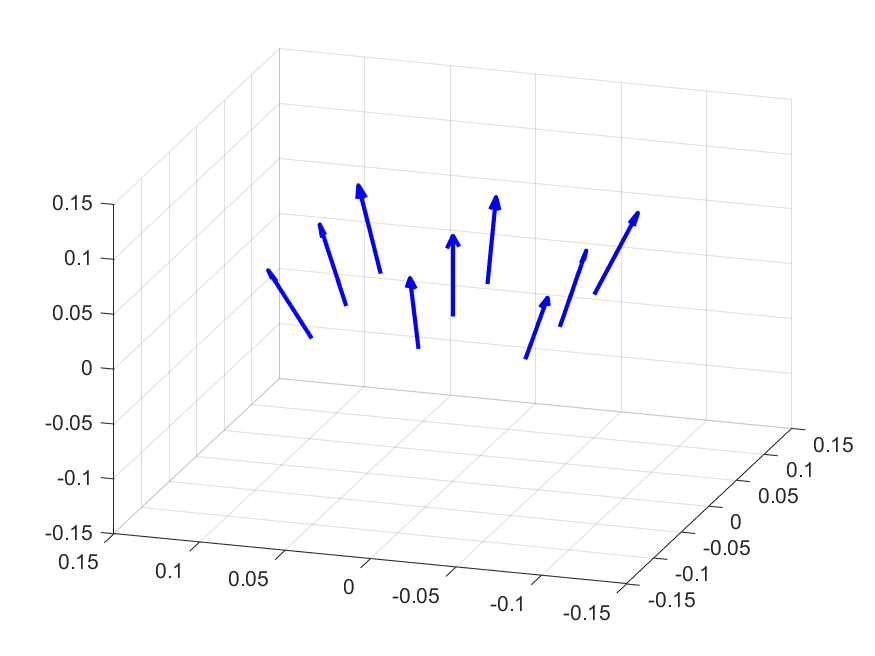}}
\subfigure[$t = 0.001$.]{
\label{Fig11.sub.2}
\includegraphics[width=3.3cm]{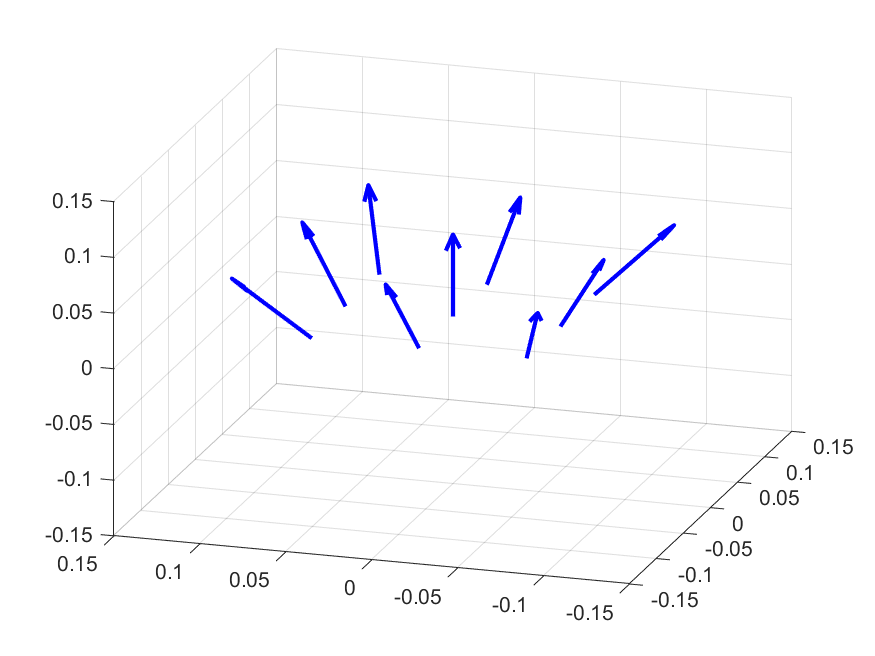}}
\subfigure[$t = 0.05$.]{
\label{Fig11.sub.4}
\includegraphics[width=3.3cm]{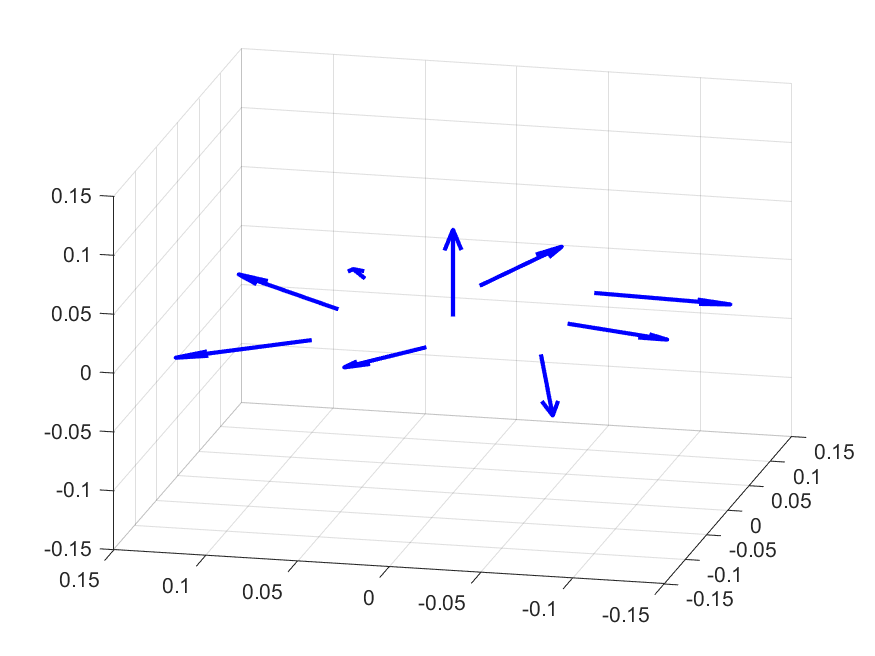}}
\subfigure[$t = 0.1$.]{
\label{Fig11.sub.5}
\includegraphics[width=3.3cm]{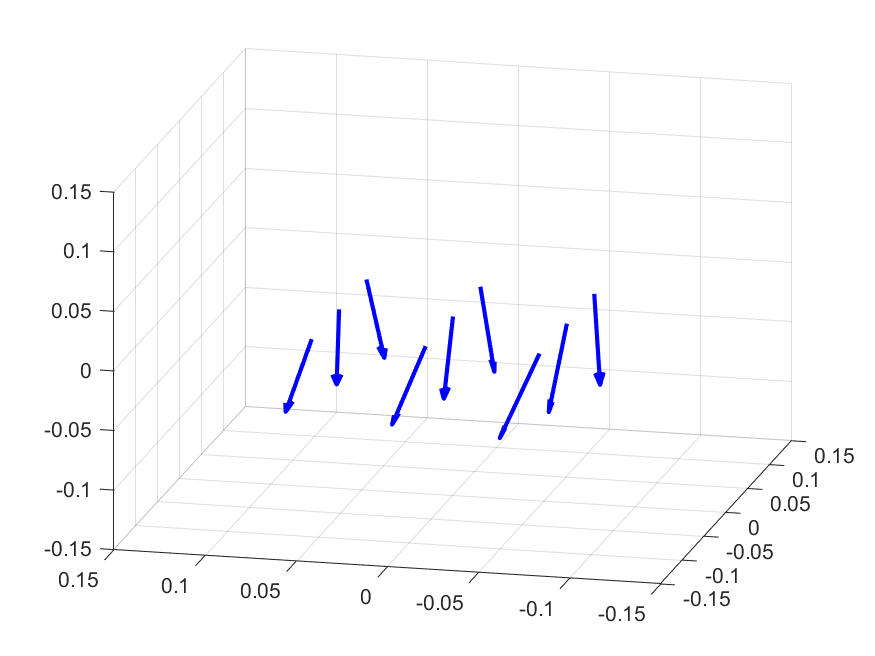}}
\caption{The magnetization around the origin at different times. The magnetization around the original points tends to align uniformly when $t > 0.1$.  The snapshots are therefore omitted here.}
\label{fig11}
\end{figure}

\begin{figure}[htbp]
\centering
\includegraphics[width=0.5\linewidth]{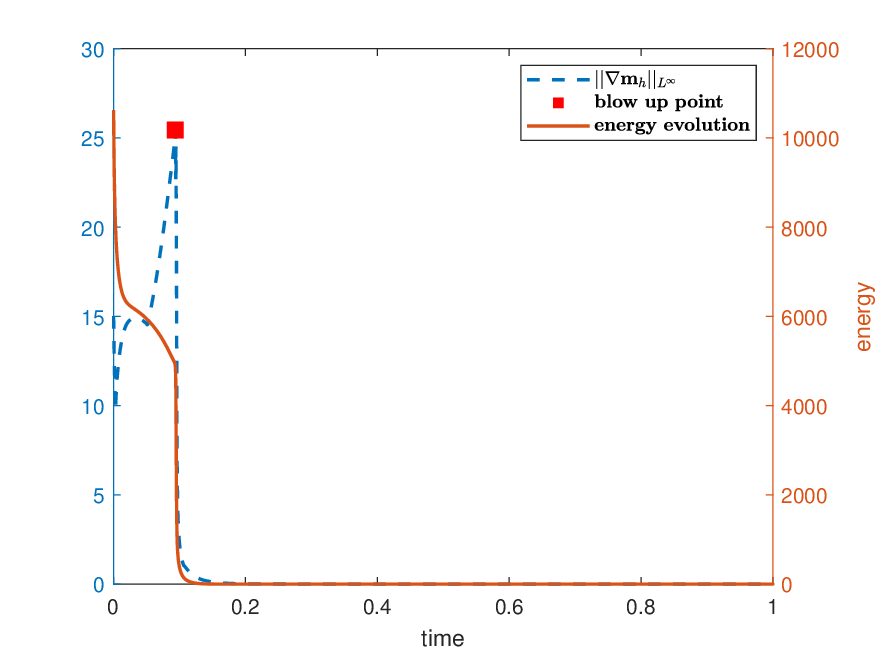}
\caption{Evolutions of the energy and $\|\nabla\bmf{m}_h \|_{\bmf{L}^{\infty}}$ using the proposed method with $\Delta t = 10^{-4}$.}
\label{fig12}
\end{figure}
 \end{example}

\subsection{Micromagnetics simulations}

We continue to investigate the effect of anisotropy on blow-up behavior and simulate several static magnetic textures using the proposed method.

\begin{example}
Consider the 2D ferromagnetic film with the size of $ 1 \, \mu \mr{m} \, \times 1 \, \mu \mr{m}$ and elements with the size of $ 20 \, \mr{nm} \, \times \, 20 \, \mr{nm}$, and choose the material parameters: $M_s=8.0 \times 10^{5} \, \mr{A/m}$, $ A = 1.3 \times 10^{-11} \, \mr{J/m} $, $K_u = 500 \, \mr{J/m^3}$,
and the damping parameter $\alpha = 1.0$.
The simulation starts with the initialization
\begin{equation}
\bmf{m}_0(\bmf{x})=\left\{\begin{aligned}
    &(0,0,-1)^T, \quad &|\bmf{x}-\bmf{x}_0| \geq 0.5, \\
    &\left(\frac{2 (x -0.5) A}{A^2+|\bmf{x} - \bmf{x}_0|^2},\frac{2 (y-0.5) A}{A^2+|\bmf{x} - \bmf{x}_0|^2},\frac{A^2-|\bmf{x} - \bmf{x}_0|^2}{A^2+|\bmf{x} - \bmf{x}_0|^2}\right)^T, & \text{otherwise},
\end{aligned}\right.
\end{equation}
where $A=(1-2|\bmf{x}-\bmf{x}_0|^2)^4$, and $\bmf{x}_0 = (0.5, 0.5)$.  We adopt the time step size $ \Delta t = 1 \, \mr{ps} $, the final time $ T = 10 \, \mr{ns} $. The different anisotropy are along with $\bmf{e}_1$ and $\bmf{e_3}$, respectively.

Snapshots at different times are depicted in \Cref{fig13}, in which the interfaces emerge.
This differs from the model which lacks the inclusion of the anisotropy and the stray field.
Concurrently,
close-up depictions of $\bmf{m}_h$ in the vicinity of the  point $\bmf{x}_0$ are presented at the corresponding time instances in \Cref{fig14}.
These snapshots align with those of the model which lacks the anisotropy and the stray field.

We also record the temporal evolution of the energy and the norm $\|\bmf{m}_h\|_{\bmf{L}^{\infty}}$ as in \Cref{fig16}.
The results indicate that the blow-up may occur within a very short time in realistic simulations.

\begin{figure}[htbp]
\centering
\subfigure[$t = 0 \,\mr{ps}$.]{\label{Fig13.sub.1}
\includegraphics[width=4.0cm]{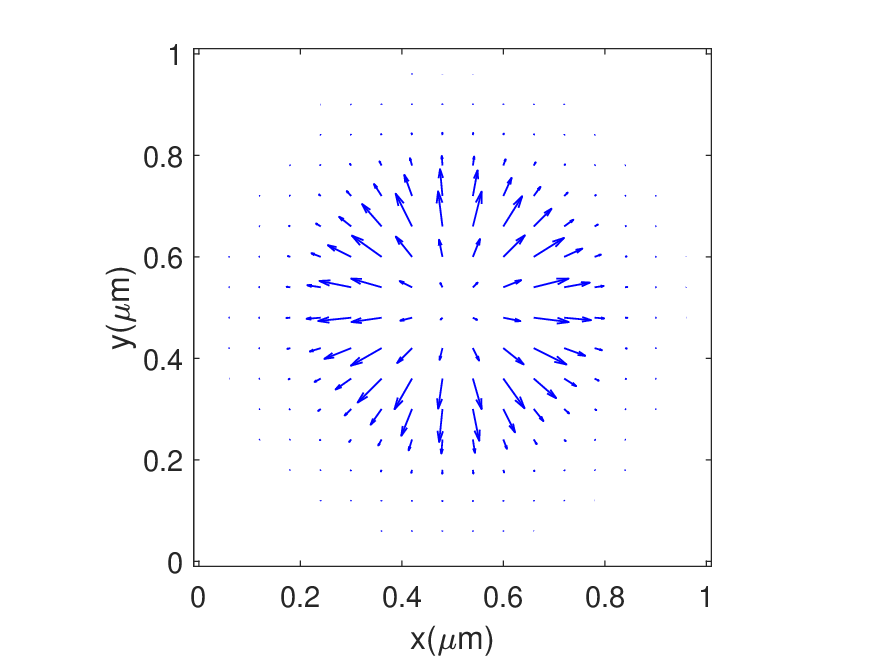}}
\hspace{-10mm}
\subfigure[$ t = 10 \,\mr{ps} $.]{
\label{Fig13.sub.2}
\includegraphics[width=4.0cm]{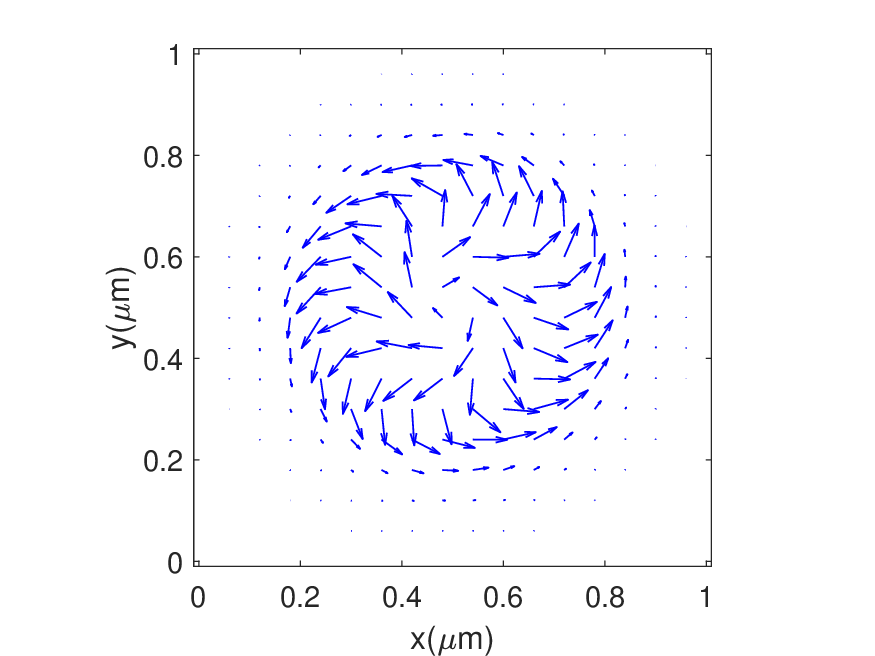}}
\hspace{-10mm}
\subfigure[$ t = 100 \,\mr{ps} $.]{
\label{Fig13.sub.3}
\includegraphics[width=4.0cm]{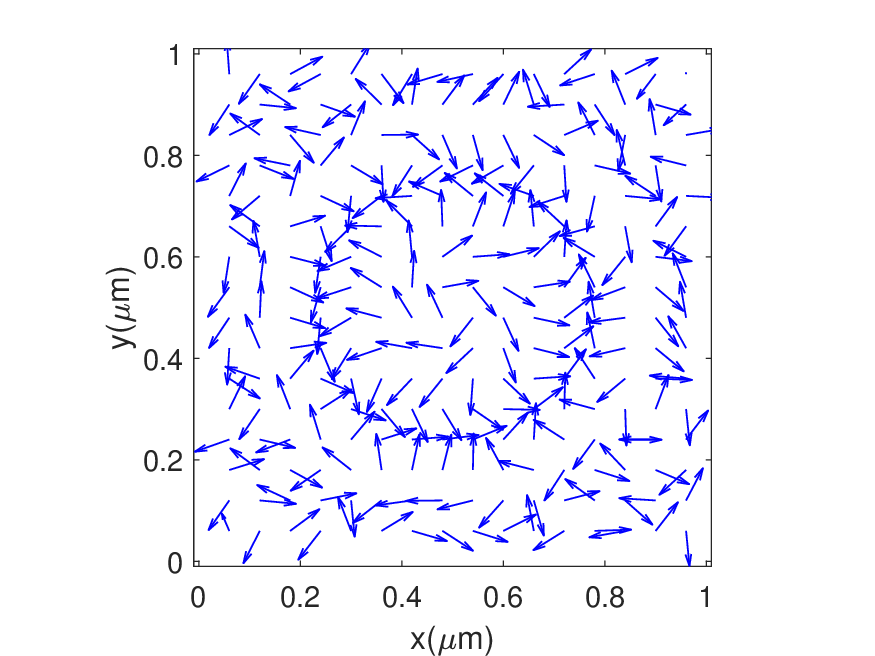}}
\hspace{-10mm}
\subfigure[$t = 500 \,\mr{ps}$.]{
\label{Fig13.sub.4}
\includegraphics[width=4.0cm]{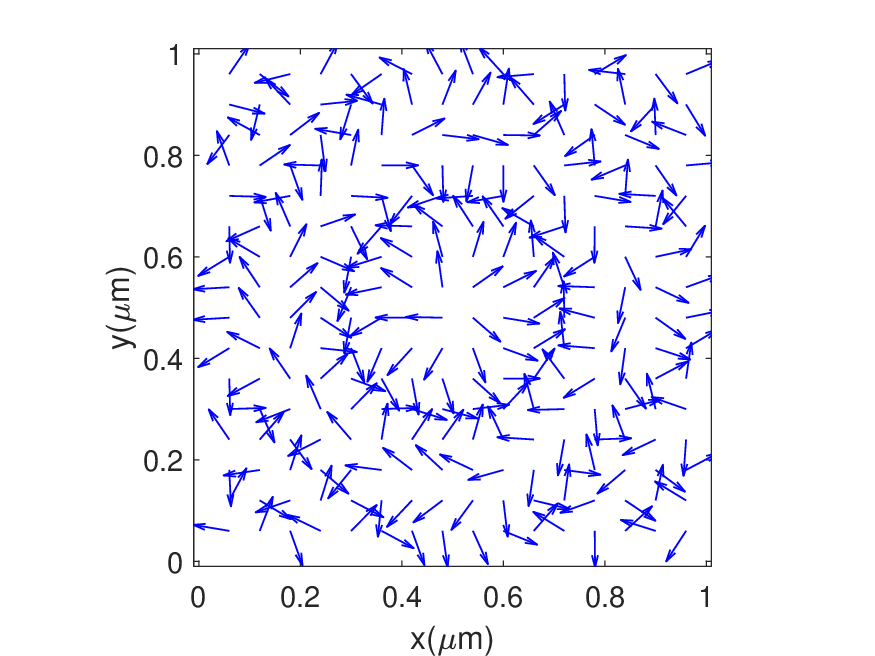}}
\\
\subfigure[$t = 1000 \,\mr{ps}$.]{\label{Fig13.sub.5}
\includegraphics[width=4.0cm]{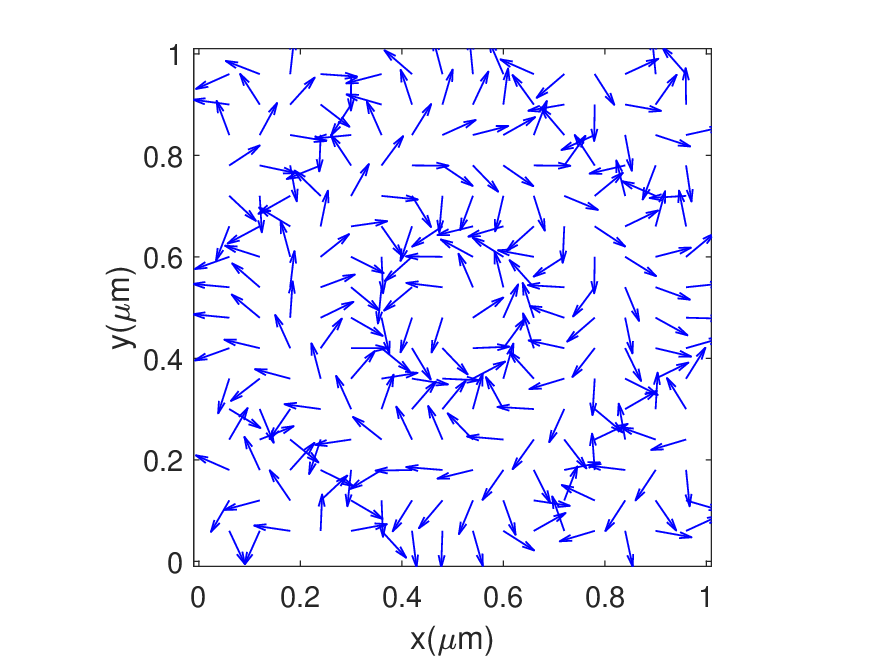}}
\hspace{-10mm}
\subfigure[$t = 2000 \,\mr{ps}$.]{\label{Fig13.sub.6}
\includegraphics[width=4.0cm]{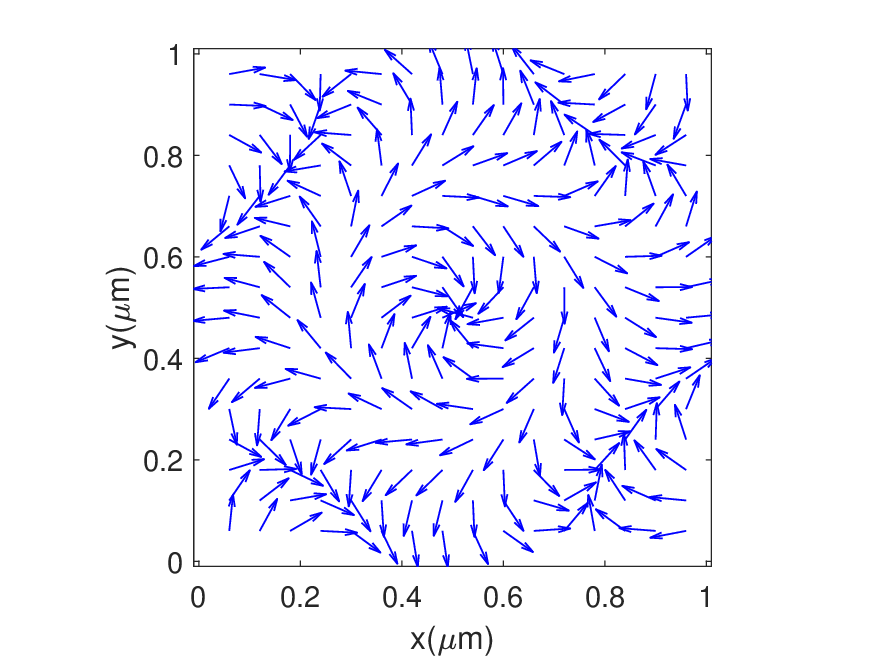}}
\hspace{-10mm}
\subfigure[$t = 4000 \,\mr{ps}$.]{\label{Fig13.sub.7}
\includegraphics[width=4.0cm]{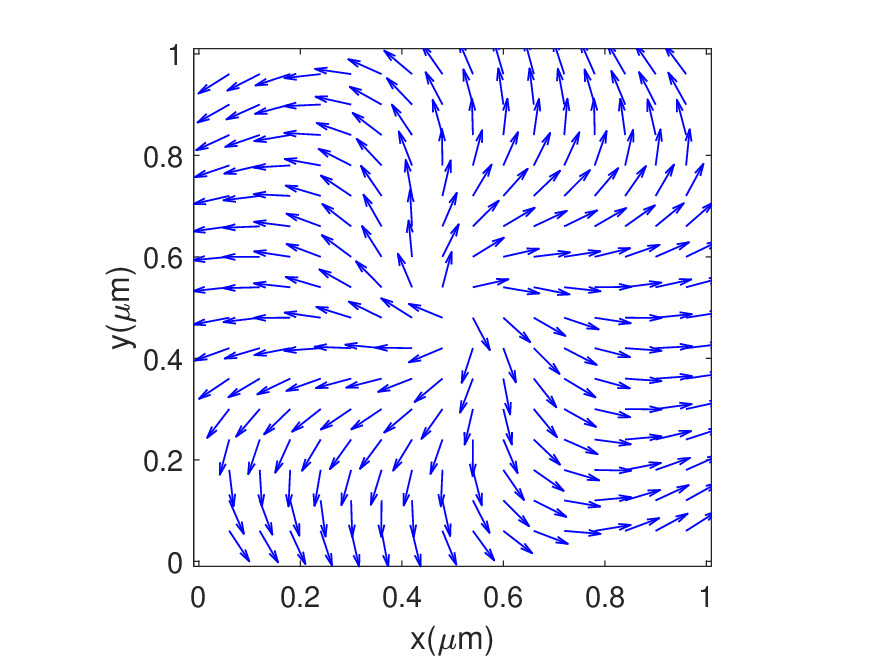}}
\hspace{-10mm}
\subfigure[$t = 6000 \,\mr{ps}$.]{\label{Fig13.sub.9}
\includegraphics[width=4.0cm]{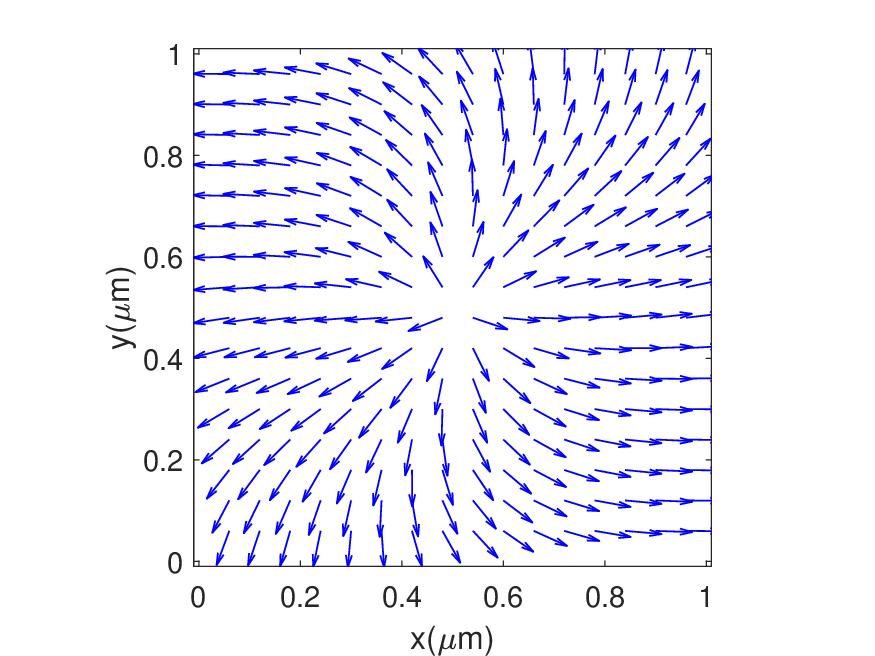}}
\caption{The magnetization textures at different times.}
\label{fig13}
\end{figure}

\begin{figure}[htbp]
\centering
\subfigure[$t = 0 \,\mr{ps}$.]{\label{Fig14.sub.1}
\includegraphics[width=3.3cm]{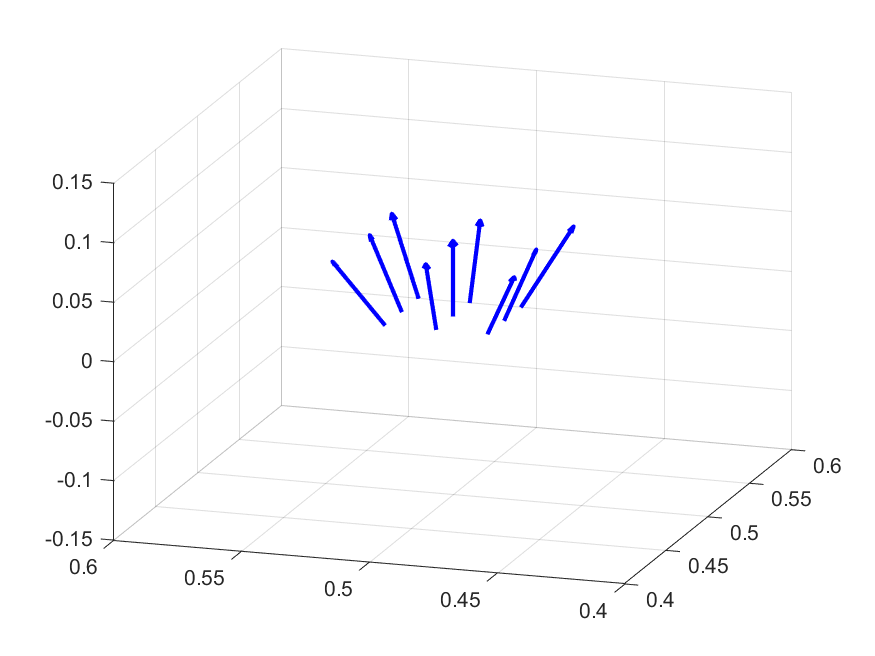}}
\subfigure[$ t = 10 \,\mr{ps} $.]{\label{Fig14.sub.2}
\includegraphics[width=3.3cm]{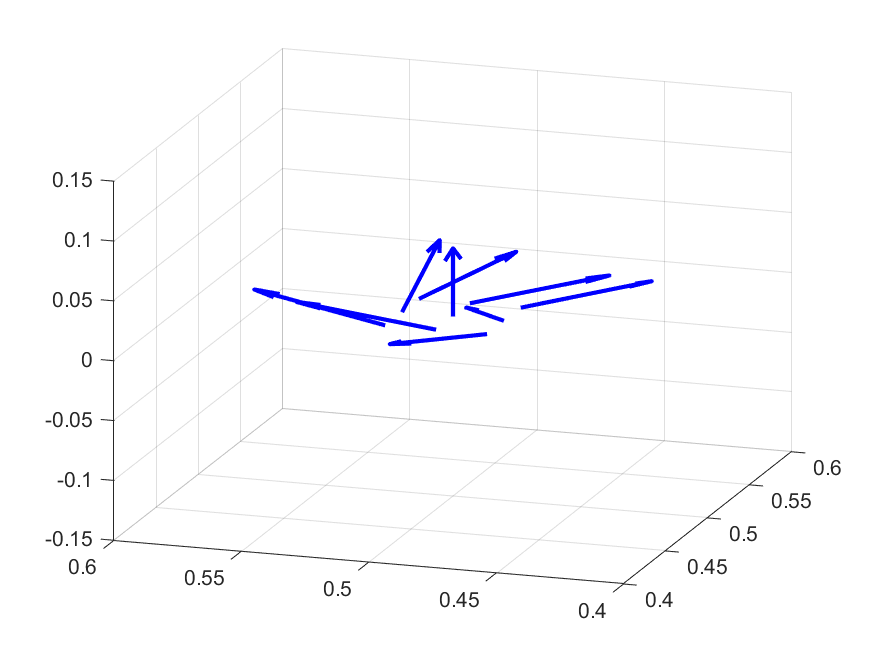}}
\subfigure[$ t = 100 \,\mr{ps} $.]{\label{Fig14.sub.3}
\includegraphics[width=3.3cm]{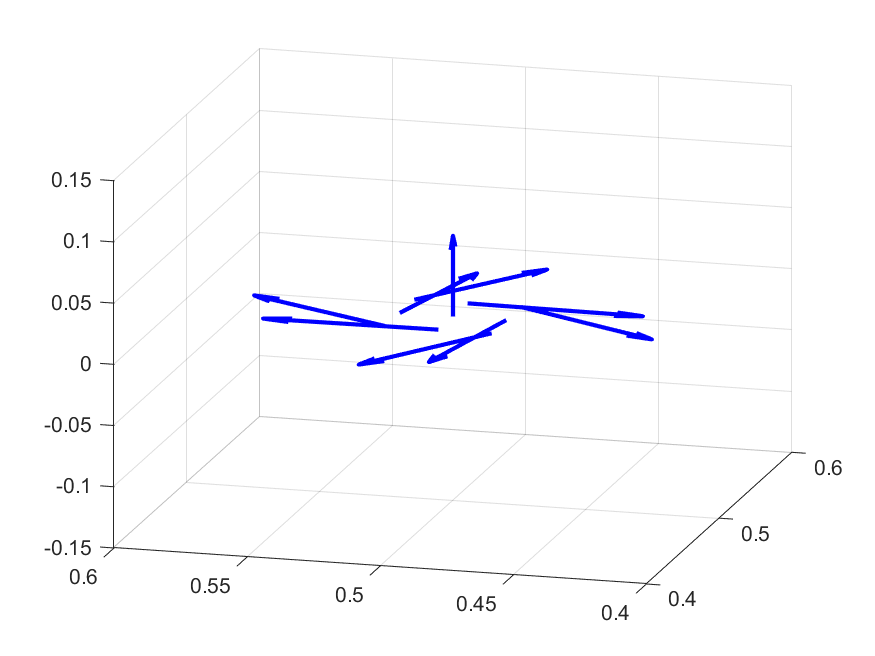}}
\subfigure[$ t = 500 \,\mr{ps} $.]{\label{Fig14.sub.4}
\includegraphics[width=3.3cm]{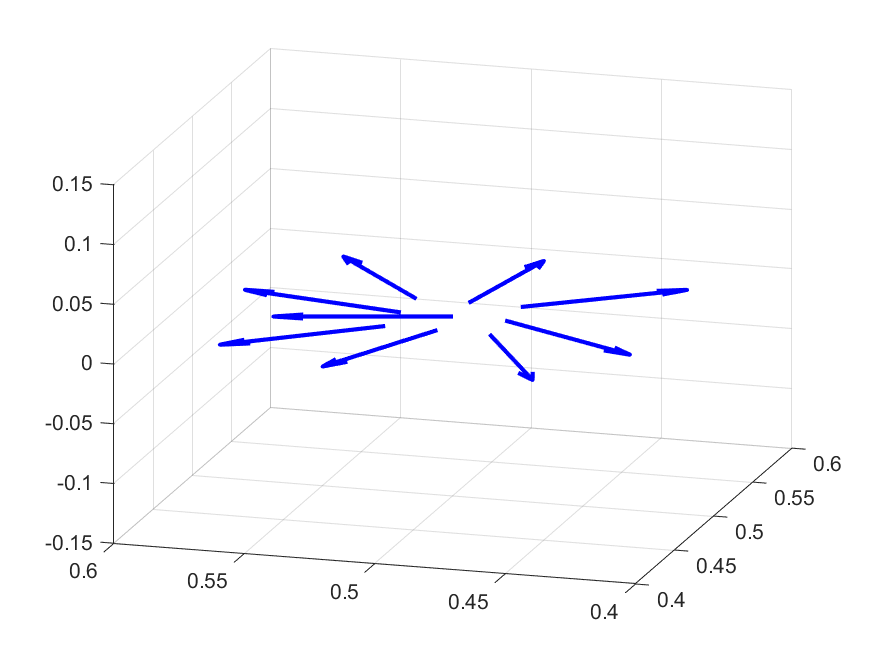}}
\subfigure[$ t = 1000 \,\mr{ps} $.]{\label{Fig14.sub.5}
\includegraphics[width=3.3cm]{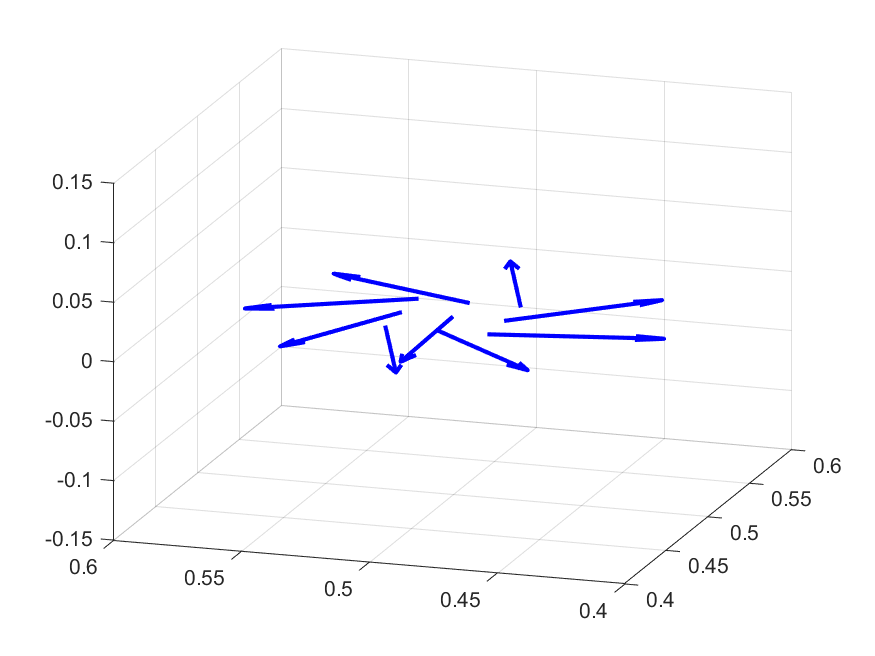}}
\subfigure[$ t = 2000 \,\mr{ps} $.]{\label{Fig14.sub.6}
\includegraphics[width=3.3cm]{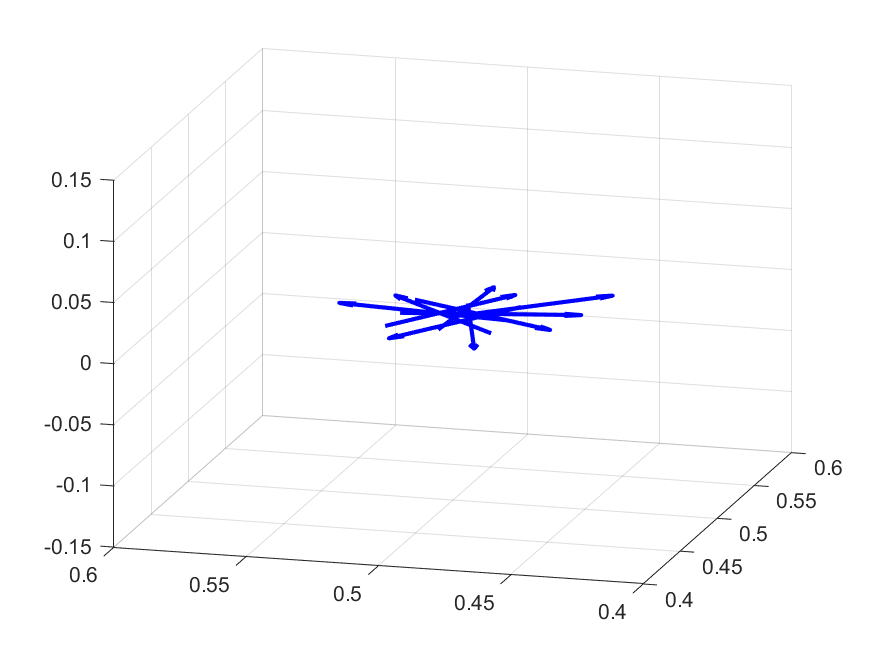}}
\subfigure[$ t = 4000 \,\mr{ps} $.]{\label{Fig14.sub.7}
\includegraphics[width=3.3cm]{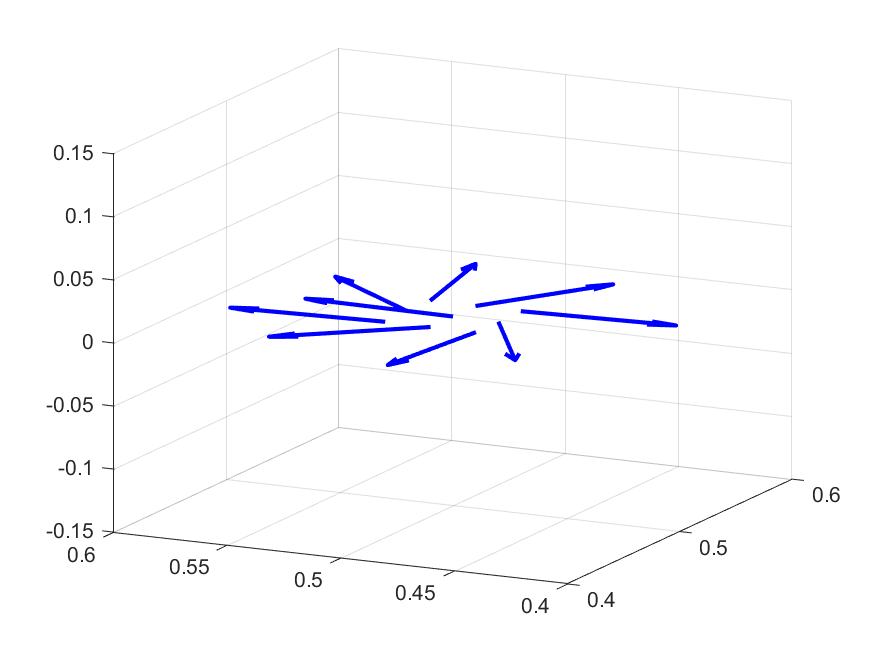}}
\subfigure[$ t = 6000 \,\mr{ps} $.]{\label{Fig14.sub.8}
\includegraphics[width=3.3cm]{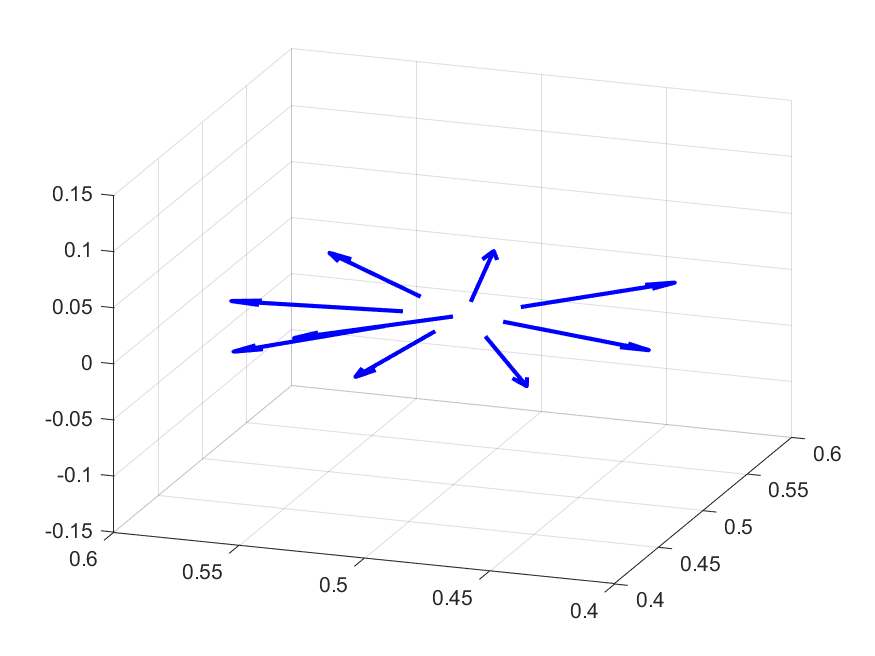}}
\caption{The magnetization around the point $\bmf{x}_0$ at different times.}
\label{fig14}
\end{figure}

\begin{figure}[htbp]
\centering
\subfigure[Anisotropy is along with $\bmf{e}_1$.]{
\label{Fig15.sub.1}
\includegraphics[width=5.0cm]{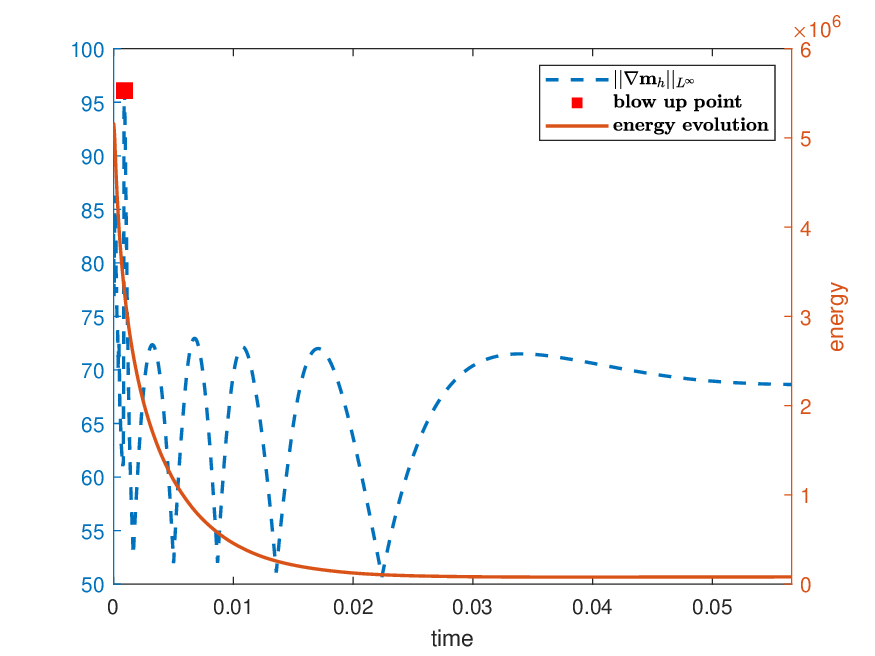}}
\subfigure[Anisotropy is along with $\bmf{e}_3$.]{
\label{Fig15.sub.2}
\includegraphics[width=5.0cm]{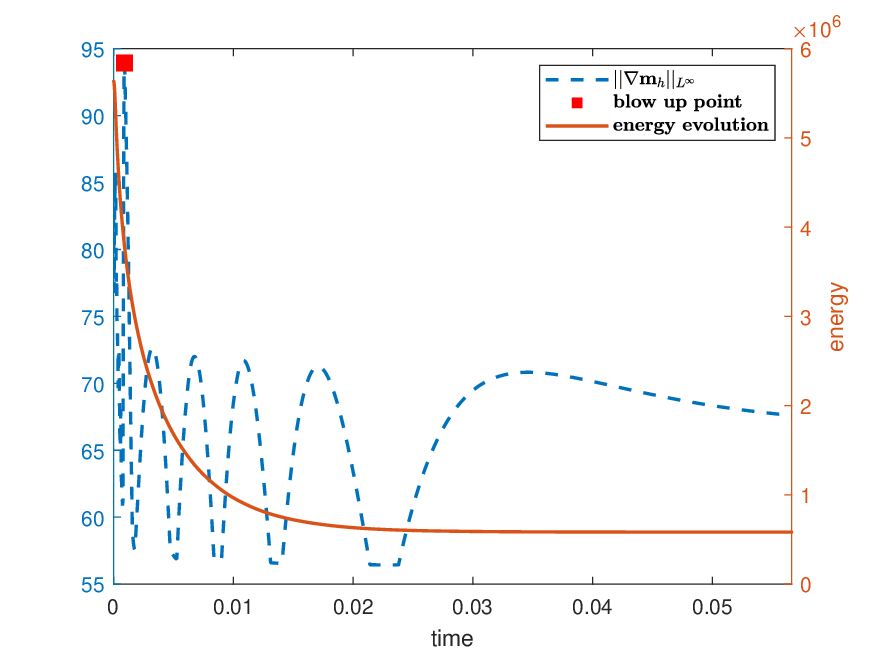}}
\caption{Evolutions of the system energy and $\|\nabla\bmf{m}_h \|_{\bmf{L}^{\infty}}$ using the proposed method with $\Delta t = 1 \,\mr{ps}$.}
\label{fig16}
\end{figure}

\end{example}

\begin{example}
At last, we study the ferromagnetic film with the size of $ 20\;\mr{nm}\times10\;\mr{nm}$ using the mesh size $0.2\;\mr{nm}\times0.2\;\mr{nm}$.
The other parameters are:
$M_s=8.0\times10^{5}\;
\mr{A/m}$, $A = 1.3\times10^{-11}\;
\mr{J/m} $, and $K_u = 100\;
\mr{J/m^3}$.
Here the time-independent Dirichlet boundary condition
\begin{equation}\label{equ:boundary-micromagnetics}
    \bmf{m}(0, y) = (0, 1, 0)^T, \quad \bmf{m}(2, y) = (0, -1, 0)^T, \quad \bmf{m}(x, 0) = (-1, 0, 0)^T, \quad \bmf{m}(x, 1) = (1, 0, 0)^T
\end{equation}
is employed. And the LL equation is evolved to $T = 5\;\mr{ns}$ with  $\Delta t = 0.1\;\mr{ps}$.
% Hence, the energy always decays as time evolves.

The first test is conducted with the initialization
$$\bmf{m}_0(x, y) = (0, 0, 1)^T, \quad (x, y) \in \Omega/\partial\Omega,$$
and different damping parameters $\alpha = 0.8, 1.0$.
Stable vortexes with different diameters are shown in \Cref{fig6}.
\begin{figure}[htbp]
\centering
\includegraphics[width=0.48\linewidth]{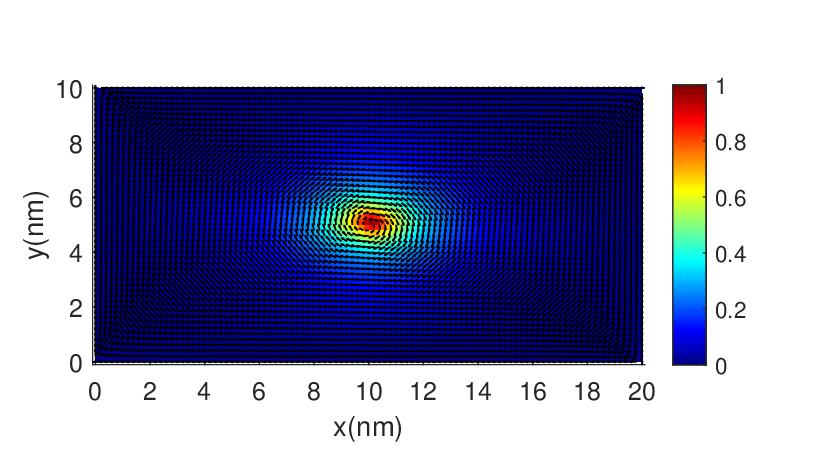}
\includegraphics[width=0.48\linewidth]{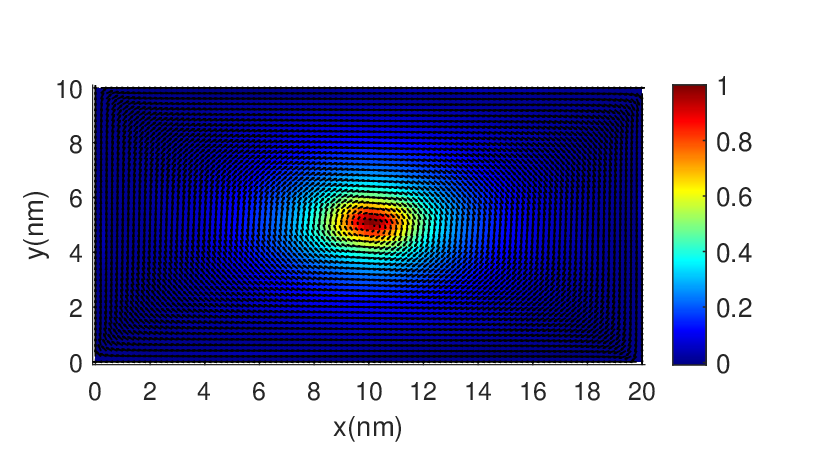}
\caption{The stable vortexes found by the LL equation with different damping parameters. Arrows denote the in-plane magnetization and the background color encodes the out-plane magnetization.}
\label{fig6}
\end{figure}

At the end of this section, we simulate several metastable magnetic states related to different initial magnetization distributions.
In \Cref{fig8}, two metastable magnetic states found by the LL equation with damping parameter $\alpha = 0.01$ are exhibited. We adopt the initial data as Figures 3.3 and 3.7 in \cite{yang2008current}, respectively. It is interesting that with the boundary condition \eqref{equ:boundary-micromagnetics}, two completely different stable states are found.
\begin{figure}[htbp]
\centering
\subfigure[Metastable magnetic state 1.]{\label{Fig8.sub.1}
\includegraphics[width=0.45\linewidth]{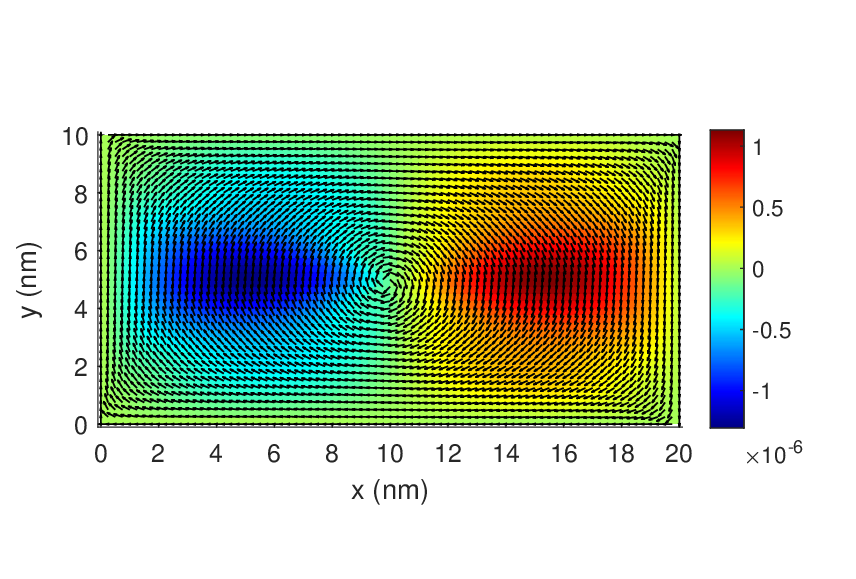}}
\subfigure[Metastable magnetic state 2.]{\label{Fig8.sub.2}
\includegraphics[width=0.45\linewidth]{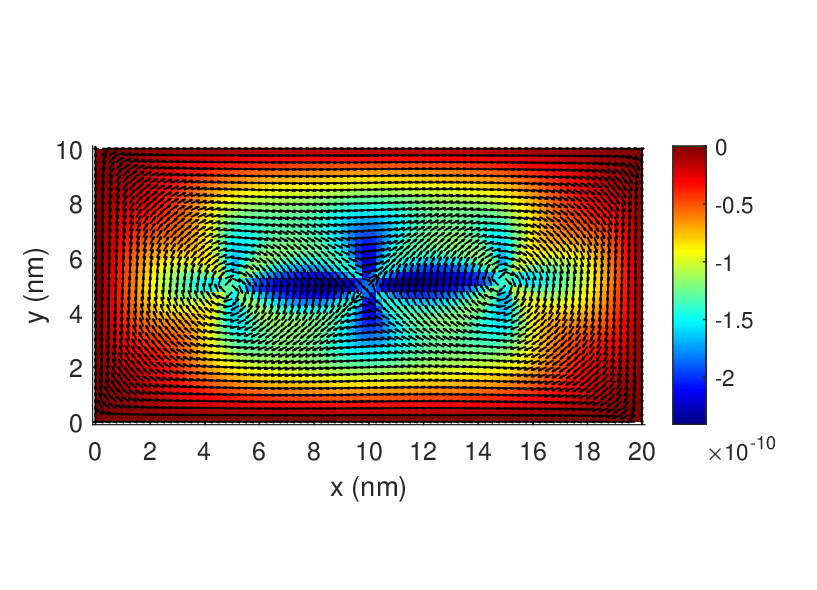}}
\caption{Metastable magnetic states found by the LL equation.
Arrows denote the in-plane magnetization and the background color encodes the out-plane magnetization.}
\label{fig8}
\end{figure}

\end{example}

\section{Conclusions\label{sec5:Conclusions}}
In this paper, we propose an efficient approach for simulating the Landau-Lifshitz (LL) equation, which combines the finite volume element method (FVEM) for spatial discretization and the Gauss-Seidel projection method (GSPM) for time-marching.
We provide a rigorous analysis for the FVEM approximation in both the $L^2$- and $H^1$-sense.
Owing to the employment of the GSPM, the computational costs are comparable to that of implicitly solving a scalar heat equation.
However, the theoretical analysis for the GSPM is still open.
We therefore carry out a series of numerical experiments to verify the convergence analysis and validate the feasibility of the proposed method.
Both error estimates and numerical results indicate that our method provides an efficient and reliable tool for simulating complex magnetization dynamics.

%%%%%%%%%%%%%%%%%%%%%%%%%%%%%%%%%%%%%%%%%%%%%%%%%%%%%%%%%%%%%%%%%%%%%%%%%%%%%%%%%%%%%%%%%%%
\appendix
\section{The proof of Lemma \ref{lem-con}} \label{app:lem-con}
Let $ \bmf{w}^l = {\bmf{m}}^{l+1} - {\bmf{m}}^l$.
According to \eqref{equ:LL-backward-Euler}, we have
\begin{equation}
\begin{aligned}
   & \frac{\bmf{w}^l}{\tau}
    -
    \alpha \Delta \bmf{w}^l
    +
    \nabla {\bmf{m}}^l \times \nabla \bmf{w}^l
    +
     {\bmf{m}}^l \times \Delta \bmf{w}^l
    +
    \nabla\bmf{w}^{l-1} \times \nabla  {\bmf{m}}^l
    +
    \bmf{w}^{l-1} \times \Delta  {\bmf{m}}^l\\
     = &
    \alpha|\nabla  {\bmf{m}}^l|^2 \bmf{w}^l
    +
    \alpha  {\bmf{m}}^l
    (|\nabla {\bmf{m}}^l|+|\nabla  {\bmf{m}}^{l-1}|) \nabla \bmf{w}^{l-1}.
    \label{equ:sequence-error}
\end{aligned}
\end{equation}
Now, we multiply both sides of \eqref{equ:sequence-error} by $ \bmf{w}^l $ and integrate over the entire domain $\Omega$. Applying Green's formula yields
\begin{equation}
\begin{aligned}
& \frac{1}{\tau} \|\bmf{w}^l\|_{\bmf{L}^2}^2
+
\alpha \|\nabla \bmf{w}^l\|_{\bmf{L}^2}^2 \\
 = &
(\bmf{w}^{l-1} \times \nabla {\bmf{m}}^l,\nabla \bmf{w}^l)
+
(\alpha|\nabla {\bmf{m}}^l|^2 \bmf{w}^l,\bmf{w}^l)
+
(\alpha {\bmf{m}}^l (|\nabla {\bmf{m}}^l|+|\nabla {\bmf{m}}^{l-1}|) \nabla \bmf{w}^{l-1},\bmf{w}^l)\\
 := &
\sum_{i=1}^{3}I_i.
\end{aligned}
\label{error-w1}
\end{equation}
Repeating the above procedure with the test functions $ \Delta \bmf{w}^l$ and $\Delta^2 \bmf{w}^l$,
we obtain
\begin{equation}
\begin{aligned}
& \frac{1}{\tau} \|\nabla \bmf{w}^l\|_{\bmf{L}^2}^2 + \alpha \|\Delta \bmf{w}^l\|_{\bmf{L}^2}^2 \\
 = &
(\nabla {\bmf{m}}^{l} \times \nabla \bmf{w}^l,\Delta \bmf{w}^l)
+
(\nabla \bmf{w}^{l-1} \times \nabla {\bmf{m}}^l,\Delta \bmf{w}^l)
+
(\bmf{w}^{l-1} \times \Delta {\bmf{m}}^l,\Delta \bmf{w}^l) \\
& + (\nabla(\alpha|\nabla {\bmf{m}}^l|^2 \bmf{w}^l),\nabla \bmf{w}^l)
-
(\alpha {\bmf{m}}^l (|\nabla {\bmf{m}}^l|+|\nabla {\bmf{m}}^{l-1}|) \nabla \bmf{w}^{l-1},\Delta \bmf{w}^l)\\
 := &
\sum_{i=4}^{8}I_i,
\end{aligned}
\label{error-w2}
\end{equation}
and
\begin{equation}
\begin{aligned}
\frac{1}{\tau} \|\Delta \bmf{w}^l\|_{\bmf{L}^2}^2
+
\alpha \|\nabla \Delta \bmf{w}^l\|_{\bmf{L}^2}^2
 = &
(\Delta {\bmf{m}}^l \times \nabla \bmf{w}^l, \nabla \Delta \bmf{w}^l)
+
2(\nabla {\bmf{m}}^l \times \Delta \bmf{w}^l, \nabla \Delta \bmf{w}^l) \\
&
+
(\Delta \bmf{w}^{l-1} \times \nabla {\bmf{m}}^l,\nabla \Delta \bmf{w}^{l})
+
2(\nabla \bmf{w}^{l-1} \times \Delta {\bmf{m}}^l,\nabla \Delta \bmf{w}^{l})\\
&
+
(\bmf{w}^{l-1} \times \nabla \Delta {\bmf{m}}^l,\nabla \Delta \bmf{w}^l)
+
(\nabla(\alpha|\nabla {\bmf{m}}^l|^2 \bmf{w}^l),\nabla \Delta \bmf{w}^l)\\
&
- \alpha( \nabla ({\bmf{m}}^l (|\nabla {\bmf{m}}^l|+|\nabla {\bmf{m}}^{l-1}|)) \nabla \bmf{w}^{l-1},\nabla \Delta \bmf{w}^l) \\
&
- (\alpha {\bmf{m}}^l (|\nabla {\bmf{m}}^l|+|\nabla {\bmf{m}}^{l-1}|) \Delta \bmf{w}^{l-1},\nabla \Delta \bmf{w}^l)\\
 := &
 \sum_{i=9}^{16}I_i.
\end{aligned}
\label{error-w3}
\end{equation}
By applying \eqref{Sobolev-inequality}, along with the H\"{o}lder and Young's inequalities, we obtain
\begin{align*}
    & |I_1|
    \leq
    \|\bmf{w}^{l-1}\|_{\bmf{L}^2} \|\nabla {\bmf{m}}^l\|_{\bmf{L}^{\infty}} \|\nabla \bmf{w}^l\|_{\bmf{L}^2}
    \leq
    C \epsilon^{-1} \|\bmf{w}^{l-1}\|_{\bmf{L}^2}^2
    +
    \epsilon \|\nabla \bmf{w}^l\|_{\bmf{L}^2}^2,\\
    & |I_2|
    \leq
    C \|\nabla {\bmf{m}}^l\|_{\bmf{L}^{\infty}}^2 \|\bmf{w}^l\|_{\bmf{L}^2}^2
    % \leq
    % C \|\nabla \tilde{\bmf{m}}\|_{\bmf{L}^2}
    % (\|\nabla \tilde{\bmf{m}}\|_{\bmf{L}^2}^2 + \|\Delta \tilde{\bmf{m}}\|_{\bmf{L}^2}^2 + \|\nabla \Delta \tilde{\bmf{m}}\|_{\bmf{L}^2}^2)^{\frac{1}{2}} \|\bmf{w}^{l}\|_{\bmf{L}^2}^2
    \leq
    C \|\bmf{w}^{l}\|_{\bmf{L}^2}^2,\\
    & |I_3|
    \leq
    \alpha \|{\bmf{m}}^l\|_{\bmf{L}^{\infty}}
    (\|\nabla {\bmf{m}}^l\|_{\bmf{L}^{\infty}}+\|\nabla {\bmf{m}}^{l-1}\|_{\bmf{L}^{\infty}}) \|\nabla \bmf{w}^{l-1}\|_{\bmf{L}^2} \|\bmf{w}^l\|_{\bmf{L}^2}
    \leq
    C \|\nabla \bmf{w}^{l-1}\|_{\bmf{L}^2}^2
    +
    C \|\bmf{w}^l\|_{\bmf{L}^2}^2,\\
     &|I_4|
   \leq
    \|\nabla {\bmf{m}}^{l}\|_{\bmf{L}^{\infty}} \|\nabla \bmf{w}^l\|_{\bmf{L}^2} \|\Delta \bmf{w}^l\|_{\bmf{L}^2}
    \leq
    C  \epsilon^{-1} \|\nabla \bmf{w}^l\|_{\bmf{L}^2}^2
    +
    \epsilon \|\Delta \bmf{w}^l\|_{\bmf{L}^2}^2,\\
    &|I_5|
    \leq
    \|\nabla \bmf{w}^{l-1}\|_{\bmf{L}^{2}} \|\nabla {\bmf{m}}^l\|_{\bmf{L}^{\infty}} \|\Delta \bmf{w}^l\|_{\bmf{L}^2}
    \leq
    C  \epsilon^{-1}  \|\nabla \bmf{w}^{l-1}\|_{\bmf{L}^2}^2
    +
    \epsilon \|\Delta \bmf{w}^l\|_{\bmf{L}^2}^2,\\
    &|I_6|
      \leq
    \|\bmf{w}^{l-1}\|_{\bmf{L}^{4}} \|\Delta {\bmf{m}}^l\|_{\bmf{L}^{4}} \|\Delta \bmf{w}^l\|_{\bmf{L}^2}
    \leq
    C  \epsilon^{-1} \|\bmf{w}^{l-1}\|_{\bmf{H}^1}^2
    +
    \epsilon \|\Delta \bmf{w}^l\|_{\bmf{L}^2}^2,\\
     &|I_7|
     \leq
    C \|\nabla {\bmf{m}}^l\|_{\bmf{L}^{\infty}}^2 \|\nabla\bmf{w}^l\|_{\bmf{L}^2}^2
    +
    C \|\nabla {\bmf{m}}^l\|_{\bmf{L}^{\infty}} \|\Delta {\bmf{m}}^l\|_{\bmf{L}^{4}}\|\bmf{w}^l\|_{\bmf{L}^4} \|\nabla \bmf{w}^l\|_{\bmf{L}^2}
    \leq
    C \|\nabla\bmf{w}^{l}\|_{\bmf{H}^1}^2 ,\\
    &|I_8|
    \leq
    \alpha \|{\bmf{m}}^l\|_{\bmf{L}^{\infty}}
    (\|\nabla {\bmf{m}}^l\|_{\bmf{L}^{\infty}}+\|\nabla {\bmf{m}}^{l-1}\|_{\bmf{L}^{\infty}}) \|\nabla \bmf{w}^{l-1}\|_{\bmf{L}^2}\|\Delta \bmf{w}^l\|_{\bmf{L}^2}
    \leq
    C  \epsilon^{-1}  \|\nabla \bmf{w}^{l-1}\|_{\bmf{L}^2}^2
    +
    \epsilon \|\Delta \bmf{w}^l\|_{\bmf{L}^2}^2,
\end{align*}
and
\begin{equation*}
\begin{aligned}
    &|I_9|
     \leq
    \|\Delta {\bmf{m}}^l\|_{\bmf{L}^4}\|\nabla \bmf{w}^l\|_{\bmf{L}^4} \|\nabla \Delta \bmf{w}^l\|_{\bmf{L}^2}
    \leq
    C  \epsilon^{-1} \|\nabla \bmf{w}^l\|_{\bmf{L}^2}^2
    +
    C  \epsilon^{-1} \|\Delta \bmf{w}^l\|_{\bmf{L}^2}^2 + \epsilon \|\nabla \Delta \bmf{w}^l\|_{\bmf{L}^2}^2,\\
   & |I_{10}|
    \leq
    2\|\nabla {\bmf{m}}^l\|_{\bmf{L}^{\infty}} \|\Delta \bmf{w}^l\|_{\bmf{L}^2} \|\nabla \Delta \bmf{w}^l\|_{\bmf{L}^2}
    \leq
    C  \epsilon^{-1} \|\Delta \bmf{w}^l\|_{\bmf{L}^2}^2
    +
    \epsilon \|\nabla \Delta \bmf{w}^l\|_{\bmf{L}^2}^2,\\
   & |I_{11}|
    \leq
    \|\Delta \bmf{w}^{l-1}\|_{\bmf{L}^2} \|\nabla {\bmf{m}}^l\|_{\bmf{L}^{\infty}} \|\nabla \Delta \bmf{w}^{l}\|_{\bmf{L}^2}
    \leq
    C  \epsilon^{-1} \|\Delta \bmf{w}^{l-1}\|_{\bmf{L}^2}^2
    +
    \epsilon \|\nabla \Delta \bmf{w}^{l}\|_{\bmf{L}^2}^2,  \\
   & |I_{12}|
     \leq
    2\|\nabla \bmf{w}^{l-1}\|_{\bmf{L}^4} \|\Delta {\bmf{m}}^l\|_{\bmf{L}^4} \|\nabla \Delta \bmf{w}^{l}\|_{\bmf{L}^2}
    \leq
    C  \epsilon^{-1} \| \nabla \bmf{w}^{l-1}\|_{\bmf{L}^2}^2
    +
    C  \epsilon^{-1} \| \Delta \bmf{w}^{l-1}\|_{\bmf{L}^2}^2
    +
    \epsilon \|\nabla \Delta \bmf{w}^{l}\|_{\bmf{L}^2}^2,\\
     &|I_{13}|
    \leq
    \|\bmf{w}^{l-1}\|_{\bmf{L}^{\infty}} \|\nabla \Delta {\bmf{m}}^l\|_{\bmf{L}^2} \|\nabla \Delta \bmf{w}^l\|_{\bmf{L}^2}
    \leq
    C  \epsilon^{-1} (\|\bmf{w}^{l-1}\|_{\bmf{L}^{2}}^2
    +
    \|\Delta \bmf{w}^{l-1}\|_{\bmf{L}^{2}}^2)
    +
    \epsilon \|\nabla \Delta \bmf{w}^l\|_{\bmf{L}^2}^2,
    \end{aligned}
\end{equation*}
\begin{align*}
    |I_{14}|
    &\leq
    C \|\nabla {\bmf{m}}^l\|_{\bmf{L}^{\infty}} \|\Delta {\bmf{m}}^l\|_{\bmf{L}^{2}} \|\bmf{w}^l\|_{\bmf{L}^{\infty}} \|\nabla \Delta \bmf{w}^l\|_{\bmf{L}^2}
    +
    C \|\nabla {\bmf{m}}^l\|_{\bmf{L}^{\infty}}^2 \|\nabla \bmf{w}^l\|_{\bmf{L}^2} \|\nabla \Delta \bmf{w}^l\|_{\bmf{L}^2}\\
    &\leq
    C (\|\bmf{w}^l\|_{\bmf{L}^2}^2 + \|\Delta \bmf{w}^l\|_{\bmf{L}^2}^2)^{\frac{1}{2}} \|\nabla \Delta \bmf{w}^l\|_{\bmf{L}^2}
    + C \|\nabla \bmf{w}^l\|_{\bmf{L}^2} \|\nabla \Delta \bmf{w}^l\|_{\bmf{L}^2} \\
    &\leq
    C  \epsilon^{-1} \|\bmf{w}^l\|_{\bmf{L}^2}^2
    +
    C  \epsilon^{-1} \|\Delta \bmf{w}^l\|_{\bmf{L}^2}^2
    +
    C  \epsilon^{-1} \|\nabla \bmf{w}^l\|_{\bmf{L}^2}^2
    +
    2 \epsilon \|\nabla \Delta \bmf{w}^l\|_{\bmf{L}^2}^2,\\
    &\leq
    C  \epsilon^{-1} \|\bmf{w}^l\|_{\bmf{H}^2}^2
    + 2 \epsilon \|\nabla \Delta \bmf{w}^l\|_{\bmf{L}^2}^2,
\end{align*}

\begin{align*}
    |I_{15}|
     \leq &
    C \|\nabla {\bmf{m}}^l\|_{\bmf{L}^{\infty}}
    (\|\nabla {\bmf{m}}^l\|_{\bmf{L}^{\infty}} + \|\nabla {\bmf{m}}^{l-1}\|_{\bmf{L}^{\infty}}) \|\nabla \bmf{w}^{l-1}\|_{\bmf{L}^2} \|\nabla \Delta \bmf{w}^l\|_{\bmf{L}^{2}} \\
    & + C \|\nabla {\bmf{m}}^l\|_{\bmf{L}^{\infty}}
    (\|\nabla {\bmf{m}}^l\|_{\bmf{L}^{4}} + \|\nabla {\bmf{m}}^{l-1}\|_{\bmf{L}^{4}}) \|\nabla \bmf{w}^{l-1}\|_{\bmf{L}^4} \|\nabla \Delta \bmf{w}^l\|_{\bmf{L}^{2}} \\
    \leq & C \|\nabla \bmf{w}^{l-1}\|_{\bmf{L}^2} \|\nabla \Delta \bmf{w}^l\|_{\bmf{L}^2}
    +
    C \|\nabla \bmf{w}^{l-1}\|_{\bmf{L}^4} \|\nabla \Delta \bmf{w}^l\|_{\bmf{L}^2} \\
    \leq &
    C  \epsilon^{-1} \|\nabla \bmf{w}^{l-1}\|_{\bmf{L}^2}
    +
    C  \epsilon^{-1} \|\Delta \bmf{w}^{l-1}\|_{\bmf{L}^2}^2
    +
    2 \epsilon \|\nabla \Delta \bmf{w}^l\|_{\bmf{L}^2}^2,
\end{align*}
\begin{equation*}
\begin{aligned}
    |I_{16}|&
    \leq
    \alpha \|{\bmf{m}}^l\|_{\bmf{L}^{\infty}}
    (\|\nabla {\bmf{m}}^l\|_{\bmf{L}^{\infty}}+\|\nabla {\bmf{m}}^{l-1}\|_{\bmf{L}^{\infty}}) \|\Delta \bmf{w}^{l-1}\|_{\bmf{L}^2} \|\nabla \Delta \bmf{w}^l\|_{\bmf{L}^2}\\
    &\leq
    C  \epsilon^{-1} \|\Delta \bmf{w}^{l-1}\|_{\bmf{L}^2}^2
    +
    \epsilon \|\nabla \Delta \bmf{w}^l\|_{\bmf{L}^2}^2.
\end{aligned}
\end{equation*}
We used the bound of $\|\bmf{m}\|_{L^{\infty}(0,T;\bmf{H}^3)}$ in the above derivations, 
and therefore the constant $C$ depends on $C_{\mathrm{reg}}$.  

Using these estimates of $I_i$, $i=1,2,\cdots,16$,
and summing up inequalities \eqref{error-w1}-\eqref{error-w3} yield
\begin{equation}
\frac{1}{\tau}\|\bmf{w}^l\|_{\bmf{H}^2}^2 + \alpha \| \bmf{w}^l\|_{\bmf{H}^3}^2
\leq
10 \epsilon \|\bmf{w}^l\|_{\bmf{H}^3}^2
+
C  \epsilon^{-1} \|\bmf{w}^l\|_{\bmf{H}^2}^2
+
C \epsilon^{-1}  \|\bmf{w}^{l-1}\|_{\bmf{H}^2}^2.
\label{equ:LL-linearized-H2-2}
\end{equation}
Choosing $\epsilon= \frac{\alpha}{10}$ and rearranging \eqref{equ:LL-linearized-H2-2}, we obtain
\begin{equation}
\|\bmf{w}^l\|_{\bmf{H}^2}
\leq
\sqrt{\frac{C\tau}{1-C\tau}} \|\bmf{w}^{l-1}\|_{\bmf{H}^2},
    \label{equ:LL-linearized-H1}
\end{equation}
where the constant $C$ depends on $C_{\mathrm{reg}}$. 

It indicates that for sufficiently small $\tau$, $ \|\bmf{w}^l\|_{\bmf{H}^2} $ converges to 0 as $l \rightarrow \infty$.

\section{The proof of Lemma \ref{lem-Ah}} \label{app:lem-Ah}
Based on the definition of $\mathcal{A}_h(\bmf{\Phi};\cdot, I^*_h\cdot)$,
we divide Lemma \ref{lem-Ah} into the following two lemmas.
\begin{lemma}
Let $\bmf{\Phi} \in \bmf{X}$. For sufficiently small $h$,
there exists a positive constant $C$ such that
for all $\tilde{\bmf{m}}_h, \bmf{v}_h\in \bmf{S}_h$
\begin{gather}
    a_h(\tilde{\bmf{m}}_h,I_h^*\tilde{\bmf{m}}_h)
    \geq
    \alpha \|\tilde{\bmf{m}}_h\|_{\bmf{H}^1}^2,    \label{ah-coe}\\
    |a_h(\tilde{\bmf{m}}_h,I_h^*\bmf{v}_h)|
    \leq
    C\|\tilde{\bmf{m}}_h\|_{\bmf{H}^1}\|\bmf{v}_h\|_{\bmf{H}^1}.
    \label{ah-boudary}
\end{gather}
\label{lem-ah}
\end{lemma}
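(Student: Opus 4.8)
The plan is to reduce both \eqref{ah-coe} and \eqref{ah-boudary} to the classical finite element bilinear form $a(\tilde{\bmf{m}}_h,\bmf{v}_h)=\alpha\int_{\Omega}\nabla\tilde{\bmf{m}}_h\cdot\nabla\bmf{v}_h\,\mr{d}x\mr{d}y$. Since $a_h$ acts componentwise (the diffusion term decouples the three components of the magnetization), it suffices to argue for the scalar space $S_h$ and then sum over components, using that the vector $\bmf{H}^1$ norm is the sum of the component norms. Throughout, the flux integrals are computed element by element, exploiting that $\nabla\tilde{\bmf{m}}_h$ is constant on each triangle $K\in T_h$ and that $\bmf{v}_h(\bmf{x}_i)=\bmf{0}$ at every boundary node, so the summation over $\mathcal{N}_h$ effectively runs over interior nodes only.

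The key step is to establish the relation between the finite volume form and the finite element form,
$$a_h(\tilde{\bmf{m}}_h,I_h^*\bmf{v}_h)=\alpha\int_{\Omega}\nabla\tilde{\bmf{m}}_h\cdot\nabla\bmf{v}_h\,\mr{d}x\mr{d}y+\mathcal{R}_h(\tilde{\bmf{m}}_h,\bmf{v}_h),$$
where, for the barycentric-type dual mesh depicted in \Cref{Fig1}, the remainder $\mathcal{R}_h$ either vanishes identically or satisfies $|\mathcal{R}_h(\tilde{\bmf{m}}_h,\bmf{v}_h)|\le Ch\|\tilde{\bmf{m}}_h\|_{\bmf{H}^1}\|\bmf{v}_h\|_{\bmf{H}^1}$. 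I would prove this by fixing a triangle $K$, mapping it to the reference triangle of \Cref{Fig1}(b) by the affine transformation, and evaluating the weighted flux sum $-\sum_i\bmf{v}_h(\bmf{x}_i)\int_{\partial V_i\cap K}\alpha\,\nabla\tilde{\bmf{m}}_h\cdot\bmf{n}\,\mr{d}S$ over the dual-edge segments joining the edge midpoints $M_i$ to the interior point $Q$. Because $\nabla\tilde{\bmf{m}}_h$ is constant on $K$, this sum is a linear combination of the nodal values whose coefficients reproduce the local stiffness entries $\int_K\nabla\phi_i\cdot\nabla\phi_j\,\mr{d}x\mr{d}y$; summing over all $K$ gives the stated relation. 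This geometric identification is the main obstacle, as it requires careful bookkeeping of which control-volume segments lie inside each element and of the associated outward normals.

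Granting the relation, the upper bound \eqref{ah-boudary} follows at once from the Cauchy--Schwarz inequality,
$$|a_h(\tilde{\bmf{m}}_h,I_h^*\bmf{v}_h)|\le\alpha\|\nabla\tilde{\bmf{m}}_h\|_{\bmf{L}^2}\|\nabla\bmf{v}_h\|_{\bmf{L}^2}+Ch\|\tilde{\bmf{m}}_h\|_{\bmf{H}^1}\|\bmf{v}_h\|_{\bmf{H}^1}\le C\|\tilde{\bmf{m}}_h\|_{\bmf{H}^1}\|\bmf{v}_h\|_{\bmf{H}^1}.$$
For the coercivity \eqref{ah-coe} I would set $\bmf{v}_h=\tilde{\bmf{m}}_h$, which gives $a_h(\tilde{\bmf{m}}_h,I_h^*\tilde{\bmf{m}}_h)\ge\alpha\|\nabla\tilde{\bmf{m}}_h\|_{\bmf{L}^2}^2-Ch\|\tilde{\bmf{m}}_h\|_{\bmf{H}^1}^2$. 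Since $\tilde{\bmf{m}}_h$ vanishes on $\partial\Omega$, the Poincaré--Friedrichs inequality yields $\|\tilde{\bmf{m}}_h\|_{\bmf{L}^2}\le C\|\nabla\tilde{\bmf{m}}_h\|_{\bmf{L}^2}$, so that $\alpha\|\nabla\tilde{\bmf{m}}_h\|_{\bmf{L}^2}^2\ge c_0\|\tilde{\bmf{m}}_h\|_{\bmf{H}^1}^2$ with $c_0=\alpha/(1+C_P)>0$. Choosing $h$ small enough that $Ch\le c_0/2$ absorbs the remainder and establishes \eqref{ah-coe}. This absorption is precisely where the hypothesis of sufficiently small $h$ is used, and it is unnecessary in the case $\mathcal{R}_h\equiv0$, in which coercivity holds for every admissible mesh.
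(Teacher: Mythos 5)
Your proof is correct and follows essentially the route the paper itself relies on: the paper gives no proof of this lemma, deferring entirely to \cite{lironghua}, where the result is obtained by exactly your comparison of $a_h(\cdot,I_h^*\cdot)$ with the finite element Dirichlet form, and the same element-wise flux-to-gradient computation appears in the paper's own Appendix~\ref{app:lem-ah-t} (proof of Lemma~\ref{lem-t}). One strengthening you half-anticipate: your remainder $\mathcal{R}_h$ in fact vanishes identically for this dual mesh, because the flux of the constant field $\nabla\tilde{\bmf{m}}_h|_{K}$ through the broken line $M_i\to Q\to M_{i+2}$ depends only on the endpoints $M_i,M_{i+2}$ (irrespective of where $Q$ lies), and summing the resulting terms against the nodal values via the gradient formula \eqref{triangle_area} yields $a_h(\tilde{\bmf{m}}_h,I_h^*\bmf{v}_h)=\alpha\int_{\Omega}\nabla\tilde{\bmf{m}}_h\cdot\nabla\bmf{v}_h\,\mr{d}x\mr{d}y$ exactly, so coercivity follows from Poincar\'e alone and the absorption argument with small $h$ is not needed.
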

The proofs of \eqref{ah-coe} and \eqref{ah-boudary} can be found in \cite{lironghua}.
\begin{lemma}
Let $\bmf{\Phi} \in \bmf{X}$. For sufficiently small $h$,
there exists a positive constant $C$ such that,
for all $\tilde{\bmf{m}}_h, \bmf{v}_h\in \bmf{S}_h$,
\begin{gather}
    b_h(\bmf{\Phi};\tilde{\bmf{m}}_h,I_h^*\bmf{m}_h)
    =
    0,    \label{bh_coe}\\
    | b_h(\bmf{\Phi};\tilde{\bmf{m}}_h,I_h^*\bmf{v}_h)|
    \leq
    C \|\tilde{\bmf{m}}_h\|_{\bmf{H}^1}\|\bmf{v}_h\|_{\bmf{H}^1}.    \label{bh-boundary}
\end{gather}
\label{lem-bh}
\end{lemma}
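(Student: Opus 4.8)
The plan is to handle the two claims separately, writing
$b_h(\bmf{\Phi};\tilde{\bmf{m}}_h,I_h^*\bmf{v}_h)=\sum_{\bmf{x}_i\in\mathcal{N}_h}\bmf{v}_h(\bmf{x}_i)\cdot\int_{\partial V_i}\bmf{\Phi}\times(\nabla\tilde{\bmf{m}}_h\cdot\bmf{n})\,\mr{d}S$ and viewing it as the dual-mesh counterpart of the conforming skew form $b(\bmf{\Phi};\tilde{\bmf{m}},\bmf{v})=\int_\Omega\bmf{\Phi}\times\nabla\tilde{\bmf{m}}\cdot\nabla\bmf{v}\,\mr{d}x\mr{d}y$ introduced in \eqref{R_h}. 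The diagonal identity \eqref{bh_coe}, i.e. $b_h(\bmf{\Phi};\tilde{\bmf{m}}_h,I_h^*\tilde{\bmf{m}}_h)=0$, is the discrete shadow of the pointwise fact that $(\bmf{\Phi}\times\partial_l\tilde{\bmf{m}})\cdot\partial_l\tilde{\bmf{m}}=0$ for each coordinate direction $l$ (a scalar triple product with a repeated vector), which is exactly the cancellation $(\bmf{\Phi}\times\nabla\tilde{\bmf{m}},\nabla\tilde{\bmf{m}})=0$ already used in the proof of Lemma \ref{lem-m-H2}. The task is to show that this cancellation survives the replacement of $\nabla\bmf{v}$ by the dual-mesh flux.

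For \eqref{bh_coe} I would localize the sum to a single triangle $K$ and to the dual segments it contains, so that $b_h=\sum_{K\in T_h}b_h|_K$. On $K$ the piecewise-linear iterate has constant gradient, so I expand $\nabla\tilde{\bmf{m}}_h\cdot\bmf{n}=\sum_k\tilde{\bmf{m}}_h(\bmf{x}_k)(\nabla\phi_k\cdot\bmf{n})$ and invoke the geometric flux identity $\int_{\partial V_j\cap K}\bmf{n}\,\mr{d}S=-\abs{K}\,\nabla\phi_j$ (up to orientation), which is precisely the relation underlying the FVEM--FEM equivalence of the stiffness form cited in Lemma \ref{lem-ah}. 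This recasts $b_h|_K$ as a double sum over the vertices of $K$ whose summand factors into the symmetric array $\nabla\phi_j\cdot\nabla\phi_k$ times the scalar triple product $\tilde{\bmf{m}}_h(\bmf{x}_j)\cdot(\bmf{\Phi}\times\tilde{\bmf{m}}_h(\bmf{x}_k))$, which is antisymmetric in $(j,k)$; the diagonal terms $j=k$ vanish since the triple product then repeats a vector. A symmetric array contracted against an antisymmetric one is zero, so $b_h|_K=0$ on every $K$. I expect this step to require evaluating $\bmf{\Phi}$ consistently as a constant on each element (its elementwise mean or nodal value), since only then does $\bmf{\Phi}$ factor out of the flux integral and leave the symmetric coefficient $\nabla\phi_j\cdot\nabla\phi_k$; this mirrors freezing $\bmf{\Phi}$ pointwise in the continuous triple-product argument.

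For the upper bound \eqref{bh-boundary} the estimate is essentially size counting. Using $\bmf{\Phi}\in\bmf{X}\hookrightarrow L^\infty(0,T;\bmf{H}^2)$ and the 2D embedding $\bmf{H}^2\hookrightarrow\bmf{L}^\infty$ from \eqref{Sobolev-inequality}, one has $\|\bmf{\Phi}\|_{\bmf{L}^\infty}\leq C$, whence $\abs{b_h}\leq C\sum_i\abs{\bmf{v}_h(\bmf{x}_i)}\int_{\partial V_i}\abs{\nabla\tilde{\bmf{m}}_h}\,\mr{d}S$. Since $\nabla\tilde{\bmf{m}}_h$ is constant on each element, $\abs{\partial V_i}\sim h$ and $\mr{mean}(V_i)\sim h^2$ by quasi-uniformity of $T_h^*$, the Cauchy--Schwarz inequality together with the equivalence of $|||\cdot|||_0$ and $\|\cdot\|_{\bmf{L}^2}$ and the standard relation between nodal values and the $\bmf{L}^2$-norm on $\bmf{S}_h$ gives $\abs{b_h}\leq C\|\nabla\tilde{\bmf{m}}_h\|_{\bmf{L}^2}\|\bmf{v}_h\|_{\bmf{L}^2}\leq C\|\tilde{\bmf{m}}_h\|_{\bmf{H}^1}\|\bmf{v}_h\|_{\bmf{H}^1}$. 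Equivalently, one may split $b_h=b+(b_h-b)$, bound the conforming part $b(\bmf{\Phi};\tilde{\bmf{m}}_h,\bmf{v}_h)$ by $C\|\bmf{\Phi}\|_{\bmf{L}^\infty}\|\tilde{\bmf{m}}_h\|_{\bmf{H}^1}\|\bmf{v}_h\|_{\bmf{H}^1}$, and control the nonconforming remainder by an $O(h)$ consistency estimate of the same flavour as Lemma \ref{erfunction} and Lemma \ref{lem-Ah1}.

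The main obstacle is \eqref{bh_coe}: the boundedness \eqref{bh-boundary} reduces to counting, but the \emph{exact} vanishing rests on aligning three ingredients --- the flux identity $\int_{\partial V_j\cap K}\bmf{n}\,\mr{d}S=-\abs{K}\nabla\phi_j$, the symmetry of $\nabla\phi_j\cdot\nabla\phi_k$, and the antisymmetry of the triple product --- under a consistent elementwise treatment of $\bmf{\Phi}$. Checking that the chosen evaluation of $\bmf{\Phi}$ does not break the symmetric/antisymmetric splitting, so that no residual survives, is the delicate point; once that is secured, the identity collapses to the linear-algebra fact that a symmetric array contracted with an antisymmetric one vanishes.
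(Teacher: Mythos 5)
Your argument for the vanishing identity \eqref{bh_coe} is correct and is essentially the paper's proof in different clothing: the paper reconstructs the constant element gradient from the vertex values via the coordinate formulas \eqref{triangle_area} and ends with the pointwise cancellation $(\bmf{\Phi}\times\partial_x\tilde{\bmf{m}}_h)\cdot\partial_x\tilde{\bmf{m}}_h+(\bmf{\Phi}\times\partial_y\tilde{\bmf{m}}_h)\cdot\partial_y\tilde{\bmf{m}}_h=0$, while you use the flux identity $\int_{\partial V_j\cap K}\bmf{n}\,\mr{d}S=-\abs{K}\,\nabla\phi_j$ and a symmetric-against-antisymmetric contraction; both hinge on freezing $\bmf{\Phi}$ at the interior point $Q$ of each element, which you flag explicitly and the paper does silently.

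The genuine gap is in the upper bound \eqref{bh-boundary}. Your primary argument --- bound $\abs{\bmf{\Phi}}$ in $\bmf{L}^{\infty}$, pull absolute values inside, and estimate $\sum_i\abs{\bmf{v}_h(\bmf{x}_i)}\int_{\partial V_i}\abs{\nabla\tilde{\bmf{m}}_h}\,\mr{d}S$ by Cauchy--Schwarz and nodal norm equivalence --- cannot produce a bound uniform in $h$, because it discards the cancellation between neighbouring nodal values and thereby loses a factor of $h$. Concretely, take $\tilde{\bmf{m}}_h$ and $\bmf{v}_h$ to be interpolants of smooth $O(1)$ functions with $O(1)$ gradients: each term satisfies $\abs{\bmf{v}_h(\bmf{x}_i)}\int_{\partial V_i}\abs{\nabla\tilde{\bmf{m}}_h}\,\mr{d}S\sim h$, there are $\sim h^{-2}$ nodes, so your intermediate quantity is of size $h^{-1}$, whereas $\|\tilde{\bmf{m}}_h\|_{\bmf{H}^1}\|\bmf{v}_h\|_{\bmf{H}^1}\sim 1$; the inequality you assert between that sum and $C\|\nabla\tilde{\bmf{m}}_h\|_{\bmf{L}^2}\|\bmf{v}_h\|_{\bmf{L}^2}$ is simply false. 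What is missing is the elementwise summation by parts that the paper performs: since the dual fluxes inside each $K_Q$ telescope, $I_{K_Q}$ can be rewritten as $\sum_{i=1}^3\bigl(\bmf{v}_h(P_{i+1})-\bmf{v}_h(P_i)\bigr)\cdot\int_{\overline{M_iQ}}\bmf{\Phi}\times\frac{\partial\tilde{\bmf{m}}_h}{\partial\bmf{\nu}}\,\mr{d}S$, and the nodal difference equals $l_i\,\frac{\partial\bmf{v}_h}{\partial\tau_i}$, which supplies exactly the missing factor $h$; the sum over elements then collapses to $\sum_{K_Q}h^2\abs{\nabla\tilde{\bmf{m}}_h}\abs{\nabla\bmf{v}_h}\leq C\|\tilde{\bmf{m}}_h\|_{\bmf{H}^1}\|\bmf{v}_h\|_{\bmf{H}^1}$.

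Ironically, the machinery you set up for \eqref{bh_coe} already contains the fix: applying the flux identity and the nodal-basis expansion to a \emph{general} test function, not only on the diagonal, converts the dual-mesh form exactly into $-\sum_{K_Q}\abs{K_Q}\,\bigl(\bmf{\Phi}(Q)\times\nabla\tilde{\bmf{m}}_h\bigr)\cdot\nabla\bmf{v}_h$, i.e.\ an elementwise-frozen FEM form, from which \eqref{bh_coe} and \eqref{bh-boundary} both follow at once. Your fallback of splitting $b_h=b+(b_h-b)$ and invoking an $O(h)$ consistency estimate is sound in outline, but as stated it defers rather than supplies the mechanism: the consistency bounds of the type proved in Appendix \ref{app:lem-Ah1} and Appendix \ref{app:lem-erfun} rest on precisely this elementwise cancellation (via properties such as \eqref{pk}), so without carrying out that step your proposal does not close.
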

\begin{proof}
We begin the proof by rewriting \( b_h(\bmf{\Phi}; \tilde{\bmf{m}}_h, I_h^* \bmf{v}_h) \) as
\begin{equation*}
b_h(\bmf{\Phi};\tilde{\bmf{m}}_h,I_h^*\bmf{v}_h)
=
\sum_{K_Q \in T_h} I_{K_Q}(\bmf{\Phi};\tilde{\bmf{m}}_h,I_h^*\bmf{v}_h),
\end{equation*}
where
\begin{equation*}
I_{K_Q}(\bmf{\Phi};\tilde{\bmf{m}}_h,I_h^*\bmf{v}_h)
=
\sum_{P \in \dot{K}_Q} \left[\int_{\partial V_i \cap K_Q} \!\!\!\!\!\!\!\!\!\!\bmf{\Phi}\times \frac{\partial \tilde{\bmf{m}}_h}{\partial \bmf{\nu}} \,\mr{d}S \right]\,\bmf{v}_h(P)
\end{equation*}
with $\dot{K}_Q$ denotes the set of vertexes of $K_Q = \triangle P_1P_2P_3$.
%%%%%%%%%%%%%%%%%%%%%%%%%%%%%%%%%%%%%%%%%%%%%%%%%%%%%%
 \begin{figure}[htp]
		\centering
		 \begin{tikzpicture}[>=stealth]
		 \draw[dash pattern = on 5pt off 2.5pt, line width = 1.5pt] (0,0) --  (-0.75,0.5);
		 \draw[dash pattern = on 5pt off 2.5pt, line width = 1.5pt] (0,0) -- (-0.25,-1);
		 \draw[dash pattern = on 5pt off 2.5pt, line width = 1.5pt] (0,0) -- (1,0.5);
         %%%%%%%%%%%%%%%%%%%%%%%%%%%%%%%%%%%%%%%%%%%%%%%%%%%
        \draw[line width = 1.5pt] (0.5,2) to (-2,-1) to (1.5,-1) to (0.5,2);
         %%%%%%%%%%%%%%%%%%%%%%%%%%%%%%
         %\node at (0,0) {\TeX};
         %\node [<options>] (<name>) at(<coordinate>){<node contents>};
         \node at (0,0.3){$Q$};
         \node at (0.5,2.2){$P_1$};
         \node at (-2.3,-1){$P_2$};
         \node at (1.8,-1){$P_3$};
         \node at (-1.1,0.6){$M_1$};
         \node at (-0.25,-1.3){$M_2$};
         \node at (1.3,0.6){$M_3$};
        \end{tikzpicture}
		\caption{Triangle element $K_Q$ ($\triangle P_{1}P_{2}P_{3}$).}
  \label{Fig2}
\end{figure}
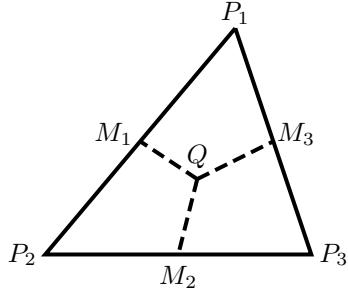	

For each triangular element $K_Q$ (\Cref{Fig2}) in the triangulation $T_h$,
the following equality holds \cite{lironghua}
\begin{equation}
\begin{aligned}
    \frac{\partial \tilde{\bmf{m}}_h}{\partial x}
    & =  \frac{1}{2S_{K_Q}}
    \sum_{i=1}^3 \tilde{\bmf{m}}_h(P_i)(y_{M_{i}}-y_{M_{i+2}}),\\
    \frac{\partial \tilde{\bmf{m}}_h}{\partial y}
    & =   \frac{1}{2S_{K_Q}}
    \sum_{i=1}^3 \tilde{\bmf{m}}_h(P_i)(x_{M_{i+2}}-x_{M_i}),
    \label{triangle_area}
\end{aligned}
\end{equation}
where $S_{K_Q}$ is the area of triangle element $K_Q$, $ \ M_4 = M_1$ and $\ M_5 = M_2$.
Since $ \frac{\partial \tilde{\bmf{m}}_h}{\partial x} $ and $ \frac{\partial\tilde{\bmf{m}}_h}{\partial y}$ are constants in the triangular element $K_Q$,
we have
\begin{equation*}
\begin{aligned}
& I_{K_Q} (\bmf{\Phi};\tilde{\bmf{m}}_h,I_h^*\tilde{\bmf{m}}_h)\\ \nn
 =
 & \sum_{i=1}^3 \tilde{\bmf{m}}_h(P_i) \cdot \int_{\overline{M_iQM_{i+2}}} \bmf{\Phi} \times \frac{\partial \tilde{\bmf{m}}_h(Q)}{\partial \bmf{\nu}} \,\mr{d}S \\
=
& \sum_{i=1}^3 \tilde{\bmf{m}}_h(P_i) \cdot \left[ \int_{\overline{M_iQM_{i+2}}} \bmf{\Phi} \times \frac{\partial \tilde{\bmf{m}}_h(Q)}{\partial x} \,\mr{d}y
- \int_{\overline{M_iQM_{i+2}}} \bmf{\Phi} \times \frac{\partial \tilde{\bmf{m}}_h(Q)}{\partial y} \, \mr{d}x \right] \\
=
& \sum_{i=1}^3 \tilde{\bmf{m}}_h(P_i) \cdot \left[ \left(\bmf{\Phi} \times \frac{\partial \tilde{\bmf{m}}_h(Q)}{\partial x}\right) (y_{M_{i+2}}-y_{M_i})
- \left(\bmf{\Phi} \times \frac{\partial \tilde{\bmf{m}}_h(Q)}{\partial y}\right) (x_{M_{i+2}}-x_{M_i})\right]\\ \nn
=
& - 2 S_{K_Q} \left[\left(\bmf{\Phi} \times \frac{\partial \tilde{\bmf{m}}_h(Q)}{\partial x}\right) \cdot \frac{\partial \tilde{\bmf{m}}_h(Q)}{\partial x} + \left(\bmf{\Phi} \times \frac{\partial \tilde{\bmf{m}}_h(Q)}{\partial y}\right) \cdot \frac{\partial \tilde{\bmf{m}}_h(Q)}{\partial y}\right]\\ \nn
=
& 0.
\end{aligned}
\end{equation*}
It therefore holds $b_h(\bmf{\Phi};\tilde{\bmf{m}}_h,I_h^*\tilde{\bmf{m}}_h) =  0$.
% \begin{equation*}
% b_h(\bmf{\Phi};\tilde{\bmf{m}}_h,I_h^*\tilde{\bmf{m}}_h) =  0.
% \end{equation*}

Next, we estimate the bound of $b_h(\bmf{\Phi};\cdot,I_h^{*}\cdot)$.
\begin{equation*}
\begin{aligned}
&  I_{K_Q} (\bmf{\Phi};\tilde{\bmf{m}}_h,I_h^*\bmf{v}_h)\\ \nn
=
& - \sum_{i=1}^3 \bmf{v}_h(P_1)\cdot\int_{\overline{M_iQM_{i+2}}} \bmf{\Phi}\times \frac{\partial \tilde{\bmf{m}}_h}{\partial \bmf{\nu}} \, \mr{d}S
\\
=
& \sum_{i=1}^3  (\bmf{v}_h(P_{i+1})-\bmf{v}_h(P_i))\cdot\int_{\overline{M_iQ}}\bmf{\Phi}\times \frac{\partial \tilde{\bmf{m}}_h}{\partial \bmf{\nu}} \, \mr{d}S,
\end{aligned}
\end{equation*}
where $ P_4 = P_1 $. Denote $l_i=|\overline{P_{i}P_{i+1}}|$,
$\tau_i=\frac{\overrightarrow{P_{i}P_{i+1}}}{l_i}$, ($ i = 1,2,3 $), and
% Combining with the linearity of $ \bmf{v}_h $ over the triangular element $ K_Q $,
we have
\begin{equation*}
\frac{\bmf{v}_h(P_{i+1})-\bmf{v}_h(P_{i})}{l_i}
=
\frac{\partial \bmf{v}_h}{\partial \tau_i}.
\end{equation*}
% Therefore,
% \begin{equation*}
% \begin{aligned}
% I_{K_Q}(\bmf{\Phi};\tilde{\bmf{m}}_h,I_h^*\bmf{v}_h)
% =
% \sum_{i=1}^{3} l_i \int_{\overline{M_i Q}}\bmf{\Phi}\times \frac{\partial \tilde{\bmf{m}}_h}{\partial \bmf{\nu}} \frac{\partial \bmf{v}_h}{\partial \tau_i} \,\mr{d}S.
% \end{aligned}
% \end{equation*}
We then arrive at
\begin{equation*}
\begin{aligned}
& b_h(\bmf{\Phi};\tilde{\bmf{m}}_h,I_h^*\bmf{v}_h)\\
 = &
 \sum_{{K_Q}\in T_h} \sum_{i=1}^{3} l_i \int_{\overline{M_i Q}}\left(\bmf{\Phi}\times \frac{\partial \tilde{\bmf{m}}_h}{\partial \bmf{\nu}}\right)\cdot \frac{\partial \bmf{v}_h}{\partial \tau_i} \,\mr{d}S\\
 \leq &
\sum_{{K_Q}\in T_h} \sum_{i=1}^{3} l_i |\bmf{\Phi}(Q)| \left|\frac{\partial \tilde{\bmf{m}}_h(Q)}{\partial \bmf{\nu}}\right| \left|\frac{\partial \bmf{v}_h(Q)}{\partial \tau_i}\right| |\overline{M_i Q}|\\
 \leq & \
C \sum_{{K_Q}\in T_h} h^2
\left(\left|\frac{\partial \tilde{\bmf{m}}_h(Q)}{\partial x}\right| + \left|\frac{\partial \tilde{\bmf{m}}_h(Q)}{\partial y}\right|\right)
\left(\left|\frac{\partial \bmf{v}_h(Q)}{\partial x}\right| + \left|\frac{\partial \bmf{v}_h(Q)}{\partial y} \right|\right) \\
 \leq & \
C \,\| \tilde{\bmf{m}}_h \|_{\bmf{H}^1}  \| \bmf{v}_h \|_{\bmf{H}^1}.
\end{aligned}
\end{equation*}
\end{proof}
By Lemma \ref{lem-ah} and Lemma \ref{lem-bh},
we get
\begin{equation*}
\begin{aligned}
&\mathcal{A}_h(\bmf{\Phi};\tilde{\bmf{m}}_h,I_h^*\tilde{\bmf{m}}_h)
=
a_h(\tilde{\bmf{m}}_h,I_h^*\tilde{\bmf{m}}_h)
+
b_h(\bmf{\Phi};\tilde{\bmf{m}}_h,I_h^*\tilde{\bmf{m}}_h)
\geq
C \|\tilde{\bmf{m}}_h\|_{\bmf{H}^1}^2,\\
&|\mathcal{A}_h(\bmf{\Phi};\tilde{\bmf{m}}_h,I_h^*\bmf{v}_h)|
\leq
|a_h(\tilde{\bmf{m}}_h,I_h^*\bmf{v}_h)|+|b_h(\bmf{\Phi};\tilde{\bmf{m}}_h,I_h^*\bmf{v}_h)|
\leq
C \| \tilde{\bmf{m}}_h \|_{\bmf{H}^1}  \| \bmf{v}_h \|_{\bmf{H}^1}.
\end{aligned}
\end{equation*}
These complete the proof.

\section{The proof of Lemma \ref{lem-Ah1}}\label{app:lem-Ah1}
Repeat the proof of Lemma 2.4 in \cite{chou},
we obtain
\begin{equation}
|a_h(\tilde{\bmf{m}}_h,I_h^*\bmf{v}_h)-a_h(\bmf{v}_h,I_h^*\tilde{\bmf{m}}_h)|
\leq
C h\|\tilde{\bmf{m}}_h\|_{\bmf{H}^1}\|\bmf{v}_h\|_{\bmf{H}^1}, \quad \forall \tilde{\bmf{m}}_h,\bmf{v}_h\in \bmf{S}_h.
\label{ah-relation}
\end{equation}
Hence,
we focus solely on estimating the bound of $|b_h(\bmf{\Phi};\tilde{\bmf{m}}_h,I_h^*\bmf{v}_h) - b_h(\bmf{\Phi};\bmf{v}_h,I_h^*\tilde{\bmf{m}}_h)|$.

For $\tilde{\bmf{m}}\in \bmf{H}^2$, $\tilde{\bmf{m}}_h,\bmf{v}_h \in \bmf{S}_h$,
applying Green's formula yields
\begin{equation}
\begin{aligned}
 b(\bmf{\Phi};\tilde{\bmf{m}}-\tilde{\bmf{m}}_h,\bmf{v}_h)
    = &
   -\sum_{K_Q\in T_h} \int_{K_Q} \left(\bmf{\Phi}\times\nabla(\tilde{\bmf{m}}-\tilde{\bmf{m}}_h)\right) \cdot \nabla\bmf{v}_h \, \mr{d}x\mr{d}y \\
    = &
    -\sum_{K_Q\in T_h} \int_{K_Q} \left((\bmf{\Phi}\!-\!\bmf{\Phi}(Q))\times\nabla(\tilde{\bmf{m}}-\tilde{\bmf{m}}_h)\right) \cdot \nabla\bmf{v}_h \mr{d}x\mr{d}y \\
   & +
   \sum_{K_Q\in T_h} \int_{K_Q} \left(\bmf{\Phi}(Q)\times\Delta \tilde{\bmf{m}}\right) \cdot \bmf{v}_h \, \mr{d}x\mr{d}y\\
   & -
   \sum_{K_Q\in T_h} \int_{K_Q} \left(\nabla\bmf{\Phi}(Q)\times\nabla \tilde{\bmf{m}}\right) \cdot \bmf{v}_h \, \mr{d}x\mr{d}y\\
   & +
   \int_{\partial{K_Q}} \left(\bmf{\Phi}(Q)\times \frac{\partial(\tilde{\bmf{m}}-\tilde{\bmf{m}}_h)}{\partial\bmf{\nu}}\right) \cdot\bmf{v}_h \, \mr{d}S.
\end{aligned}
\label{b-1}
\end{equation}
Meanwhile, for $\tilde{\bmf{m}}\in \bmf{H}^2$, $\bmf{v}_h \in \bmf{S}_h$, we have
\begin{equation*}
\begin{aligned}
   & \sum_{K_Q \in T_h} \int_{K_Q} \left(\bmf{\Phi}(Q)\times\Delta \tilde{\bmf{m}}\right) \cdot \bmf{v}_h \, \mr{d}x\mr{d}y \\
   = &
   \sum_{K_Q \in T_h} \sum_{P\in \dot{K}_Q} \int_{{K_Q} \bigcap V_i} \left(\bmf{\Phi}(Q)\times\Delta \tilde{\bmf{m}}\right) \cdot \bmf{v}_h \, \mr{d}x\mr{d}y\\
   = &
   \sum_{K_Q \in T_h} \sum_{P\in \dot{K}_Q} \left[\int_{\partial({K_Q} \bigcap V_i)} \left(\bmf{\Phi}(Q)\times \frac{\partial\tilde{\bmf{m}}}{\partial\bmf{\nu}}\right) \cdot\bmf{v}_h \, \mr{d}S
   -
   \int_{{K_Q} \bigcap V_i} \left(\nabla\bmf{\Phi}(Q)\times\nabla \tilde{\bmf{m}}\right) \cdot\bmf{v}_h \, \mr{d}x\mr{d}y \right]\\
   % =  &
   % \sum_{K_Q \in T_h} \sum_{P\in \dot{K}_Q} (\int_{\partial V_i \bigcap {K_Q}} \bmf{\Phi}(Q)\times(\frac{\partial\tilde{\bmf{m}}}{\partial\bmf{\nu}}) \bmf{v}_h \, \mr{d}S
   % -
   % \int_{{K_Q} \bigcap V_i} (\nabla\bmf{\Phi}(Q)\times\nabla \tilde{\bmf{m}}) \bmf{v}_h \, \mr{d}x\mr{d}y)\\
   % & +
   % \sum_{K_Q \in T_h} \int_{\partial{K_Q}} (\bmf{\Phi}(Q)\times(\frac{\partial\tilde{\bmf{m}}}{\partial\bmf{\nu}}) \bmf{v}_h \, \mr{d}S\\
   =  &
   \sum_{K_Q \in T_h} \sum_{P\in \dot{K}_Q} \int_{\partial V_i \bigcap {K_Q}} \left(\bmf{\Phi}(Q)\times \frac{\partial\tilde{\bmf{m}}}{\partial\bmf{\nu}}\right) \cdot\bmf{v}_h \, \mr{d}S
   -
    \sum_{K_Q \in T_h}\int_{{K_Q}} \left(\nabla\bmf{\Phi}(Q)\times\nabla \tilde{\bmf{m}}\right) \cdot \bmf{v}_h \, \mr{d}x\mr{d}y\\
   & +
   \sum_{K_Q \in T_h} \int_{\partial{K_Q}} \left(\bmf{\Phi}(Q)\times \frac{\partial\tilde{\bmf{m}}}{\partial\bmf{\nu}}\right)  \cdot \bmf{v}_h \, \mr{d}S.
\end{aligned}
% \label{bh-la}
\end{equation*}
Let $\bmf{v}_h = I_h^*\bmf{v}_h$ in \eqref{b-1} and we obtain
\begin{equation*}
\begin{aligned}
 b_h(\bmf{\Phi};\tilde{\bmf{m}}-\tilde{\bmf{m}}_h,I_h^*\bmf{v}_h)
    = &
   \sum_{K_Q \in T_h} \sum_{P\in \dot{K}_Q}  \int_{\partial V_i \bigcap K_Q} \left(\bmf{\Phi}\times \frac{\partial(\tilde{\bmf{m}}-\tilde{\bmf{m}}_h)}{\partial\bmf{\nu}}\right) \cdot I_h^*\bmf{v}_h \, \mr{d}S \\
    = &
   \sum_{K_Q \in T_h} \sum_{P\in \dot{K}_Q}  \int_{\partial V_i \bigcap K_Q} \left(\left(\bmf{\Phi}-\bmf{\Phi}(Q)\right)\times \frac{\partial(\tilde{\bmf{m}}-\tilde{\bmf{m}}_h)}{\partial\bmf{\nu}}\right) \cdot I_h^*\bmf{v}_h \, \mr{d}S   \\
   & +
   \sum_{K_Q\in T_h}\int_{K_Q} \left(\bmf{\Phi}(Q)\times\Delta \tilde{\bmf{m}}\right) \cdot I_h^*\bmf{v}_h \, \mr{d}x\mr{d}y\\
   &
   -
   \sum_{K_Q\in T_h}\int_{K_Q} \left(\nabla\bmf{\Phi}(Q)\times\nabla \tilde{\bmf{m}}\right) \cdot I_h^*\bmf{v}_h \, \mr{d}x\mr{d}y\\
  & +
   \sum_{K_Q\in T_h}\int_{\partial{K_Q}} \left(\bmf{\Phi}(Q)\times \frac{\partial(\tilde{\bmf{m}}-\tilde{\bmf{m}}_h)}{\partial\bmf{\nu}}\right) \cdot I_h^*\bmf{v}_h \, \mr{d}S.
\end{aligned}
\end{equation*}
Hence, we have
\begin{equation*}
     b(\bmf{\Phi};\tilde{\bmf{m}}-\tilde{\bmf{m}}_h,\bmf{v}_h) - b_h(\bmf{\Phi};\tilde{\bmf{m}}-\tilde{\bmf{m}}_h,I_h^*\bmf{v}_h)
     =
     \sum_{i=1}^5 E_i(\bmf{\Phi};\tilde{\bmf{m}}-\tilde{\bmf{m}}_h,\bmf{v}_h),
\end{equation*}
where
\begin{equation*}
     E_1(\bmf{\Phi};\tilde{\bmf{m}}-\tilde{\bmf{m}}_h,\bmf{v}_h)
     =
     -\sum_{K_Q\in T_h} \int_{K_Q} \left((\bmf{\Phi}\!-\!\bmf{\Phi}(Q))\times\nabla(\tilde{\bmf{m}}-\tilde{\bmf{m}}_h)\right) \cdot \nabla\bmf{v}_h \mr{d}x\mr{d}y,
\end{equation*}
\begin{equation*}
    E_2(\bmf{\Phi};\tilde{\bmf{m}}-\tilde{\bmf{m}}_h,\bmf{v}_h)
    =
    \sum_{K_Q \in T_h} \sum_{P\in \dot{K}_Q}  \int_{\partial V_i \bigcap K_Q} \left(\bmf{\Phi}\times \frac{\partial(\tilde{\bmf{m}}-\tilde{\bmf{m}}_h)}{\partial\bmf{\nu}} \right) \cdot I_h^*\bmf{v}_h \, \mr{d}S,
\end{equation*}
\begin{equation*}
    E_3(\bmf{\Phi};\tilde{\bmf{m}}-\tilde{\bmf{m}}_h,\bmf{v}_h)
    =
     \sum_{K_Q\in T_h} \int_{K_Q} (\bmf{\Phi}(Q)\times\Delta \tilde{\bmf{m}}) \cdot (\bmf{v}_h-I_h^*\bmf{v}_h) \, \mr{d}x\mr{d}y,
\end{equation*}
\begin{equation*}
    E_4(\nabla\bmf{\Phi};\tilde{\bmf{m}}-\tilde{\bmf{m}}_h,\bmf{v}_h)
    =
    -\sum_{K_Q\in T_h} \int_{K_Q} (\nabla\bmf{\Phi}(Q)\times\nabla \tilde{\bmf{m}}) \cdot
    (\bmf{v}_h-I_h^*\bmf{v}_h) \, \mr{d}x\mr{d}y,
\end{equation*}
\begin{equation*}
    E_5(\bmf{\Phi};\tilde{\bmf{m}}-\tilde{\bmf{m}}_h,\bmf{v}_h)
    =
    \int_{\partial{K_Q}} \left(\bmf{\Phi}(Q)\times \frac{\partial(\tilde{\bmf{m}}-\tilde{\bmf{m}}_h)}{\partial\bmf{\nu}}\right) \cdot (\bmf{v}_h-I_h^*\bmf{v}_h) \, \mr{d}S.
\end{equation*}
We next derive the bounds of $E_i$.
For $E_1$, since $\bmf{\Phi} \in \bmf{X}$, it holds
\begin{equation*}
    |E_1(\bmf{\Phi};\tilde{\bmf{m}}-\tilde{\bmf{m}}_h,\bmf{v}_h)| \leq C h \|\tilde{\bmf{m}}-\tilde{\bmf{m}}_h\|_{\bmf{H}^1}\|\bmf{v}_h\|_{\bmf{H}^1}.
\end{equation*}
$E_2(\bmf{\Phi};\tilde{\bmf{m}}-\tilde{\bmf{m}}_h,\bmf{v}_h)$ can be rewritten into
\begin{equation*}
\left|E_2(\bmf{\Phi};\tilde{\bmf{m}}-\tilde{\bmf{m}}_h,\bmf{v}_h) \right|
=
 \left|\sum_{K_Q \in T_h} \sum_{i=1}^3  (\bmf{v}_h(P_{i+1})-\bmf{v}_h(P_i))\cdot\int_{\overline{M_iQ}}(\bmf{\Phi}-\bmf{\Phi}(Q))\times \frac{\partial(\tilde{\bmf{m}}- \tilde{\bmf{m}}_h)}{\partial \bmf{\nu}} \, \mr{d}S\right|,
\end{equation*}
where $P_i$ denotes the vertex of triangle element $K_Q$, and $P_4=P_1$. Since $\bmf{v}_h$ is linear in $K_Q$, and using the Taylor's expansion, we get
\begin{equation*}
    |\bmf{v}_h(P_{i+1})-\bmf{v}_h(P_i)|
    =
    \left|\frac{\partial\bmf{v}_h}{\partial x} (x_{P_i}-x_{P_{i+1}})+ \frac{\partial\bmf{v}_h}{\partial y} (y_{P_i}-y_{P_{i+1}}) \right|
    \leq
    h \left(\left|\frac{\partial\bmf{v}_h}{\partial x}\right| + \left| \frac{\partial\bmf{v}_h}{\partial y} \right|\right).
\end{equation*}
Combined with the inequality \cite{chou}
\begin{equation*}
    \int_{\overline{M_iQ}} \left( \frac{\partial(\tilde{\bmf{m}}- \tilde{\bmf{m}}_h)}{\partial \bmf{\nu}}\right)^2 \, \mr{d}S
    \leq
    C(h^{-1} |\tilde{\bmf{m}} - \tilde{\bmf{m}}_h|_{\bmf{H}^1(K_Q)}^2 + |\tilde{\bmf{m}}|_{\bmf{H}^2(K_Q)}|\tilde{\bmf{m}} - \tilde{\bmf{m}}_h|_{\bmf{H}^1(K_Q)}),
\end{equation*}
it leads to an estimate
\begin{equation*}
    \begin{aligned}
& |E_2(\bmf{\Phi};\tilde{\bmf{m}}-\tilde{\bmf{m}}_h,\bmf{v}_h)|\\
\leq &
 \sum_{K_Q \in T_h} \sum_{i=1}^3  |\bmf{v}_h(P_{i+1})-\bmf{v}_h(P_i)|\int_{\overline{M_iQ}}|\bmf{\Phi}-\bmf{\Phi}(Q)| \left|\frac{\partial(\tilde{\bmf{m}}- \tilde{\bmf{m}}_h)}{\partial \bmf{\nu}}\right| \, \mr{d}S\\
  \leq &
 C h (\|\tilde{\bmf{m}}-\tilde{\bmf{m}}_h\|_{\bmf{H}^1}+h^{1/2}\|\tilde{\bmf{m}}-\tilde{\bmf{m}}_h\|_{\bmf{H}^1}^{1/2}\|\tilde{\bmf{m}}\|_{\bmf{H}^2}^{1/2})\|\bmf{v}_h\|_{\bmf{H}^1}.
\end{aligned}
\end{equation*}
Owing to
\begin{equation*}
    \|\bmf{v}_h-I_h^*\bmf{v}_h\|_{\bmf{L}^2} \leq h \|\bmf{v}_h\|_{\bmf{H}^1},
    \, \forall \, \bmf{v}_h \in \bmf{S}_h,
\end{equation*}
we obtain
\begin{equation*}
    |E_3(\bmf{\Phi};\tilde{\bmf{m}}-\tilde{\bmf{m}}_h,\bmf{v}_h)| \leq C h \|\tilde{\bmf{m}}\|_{\bmf{H}^2}\|\bmf{v}_h\|_{\bmf{H}^{1}},
\end{equation*}
and
\begin{equation*}
    |E_4(\nabla\bmf{\Phi};\tilde{\bmf{m}}-\tilde{\bmf{m}}_h,\bmf{v}_h)|
    \leq C h \|\tilde{\bmf{m}}\|_{\bmf{H}^1}\|\bmf{v}_h\|_{\bmf{H}^{1}}.
\end{equation*}
Since $ \frac{\partial\bmf{v}_h}{\partial\bmf{\nu}} $ is a constant along with edges of the element $K_Q$, i.e.,
\begin{equation}
    \int_{\partial K_Q}(\bmf{v}_h-I_h^*\bmf{v}_h)\,\mr{d}S = 0,
    \label{pk}
\end{equation}
for $E_5$, we have
\begin{equation*}
\begin{aligned}
    E_5(\bmf{\Phi};\tilde{\bmf{m}}-\tilde{\bmf{m}}_h,\bmf{v}_h)
    =&
    \int_{\partial{K_Q}} \left(\bmf{\Phi}(Q)\times \frac{\partial(\tilde{\bmf{m}}-\tilde{\bmf{m}}_h)}{\partial\bmf{\nu}}\right) \cdot (\bmf{v}_h-I_h^*\bmf{v}_h) \, \mr{d}S\\
    =&
    \int_{\partial{K_Q}} \left(\bmf{\Phi}(Q)\times \frac{\partial\tilde{\bmf{m}}}{\partial\bmf{\nu}}\right) \cdot (\bmf{v}_h-I_h^*\bmf{v}_h) \, \mr{d}S.
\end{aligned}
\end{equation*}
Let $N_e$ denote the number of all triangle elements $K_Q$, and let $L$ denote the common edge shared by two adjacent elements $K_{Q_i}$ and $K_{Q_{i+1}}$.
Then, we have
\begin{equation*}
\begin{aligned}
    E_5(\bmf{\Phi};\tilde{\bmf{m}}-\tilde{\bmf{m}}_h,\bmf{v}_h)
    = &
    \sum_{i=1}^{N_e}\int_{L}
    \left(\left(\bmf{\Phi}(Q_{i+1})-\bmf{\Phi}(Q_i)\right)\times \frac{\partial\tilde{\bmf{m}}}{\partial\bmf{\nu}}\right) \cdot \left(\bmf{v}_h-I_h^*\bmf{v}_h\right) \, \mr{d}S\\
    = &
    \sum_{i=1}^{N_e}\int_{L}
    \left(\left(\bmf{\Phi}(Q_{i+1})-\bmf{\Phi}(Q_i)\right)\times \left( \frac{\partial\tilde{\bmf{m}}}{\partial\bmf{\nu}}-C_m\right)\right) \cdot\left(\bmf{v}_h-I_h^*\bmf{v}_h\right) \, \mr{d}S,
\end{aligned}
\end{equation*}
where $C_m$ is a constant defined as
\begin{equation*}
    C_m := \frac{1}{2} \left( \frac{\partial\tilde{\bmf{m}}_h^{i}}{\partial \bmf{\nu}}+\frac{\partial\tilde{\bmf{m}}_h^{i+1}}{\partial \bmf{\nu}} \right)
\end{equation*}
with $\tilde{\bmf{m}}_h^{i}$ and $\tilde{\bmf{m}}_h^{i+1}$ being the bounds of $\tilde{\bmf{m}}_h$ in the triangular element $K_{Q_i}$ and $K_{Q_{i+1}}$, respectively.
Consequently, we obtain
\begin{equation*}
\begin{aligned}
    E_5(\bmf{\Phi};\tilde{\bmf{m}}-\tilde{\bmf{m}}_h,\bmf{v}_h)
    = &
    \sum_{i=1}^{N_e}\int_{L}
    \left(\left(\bmf{\Phi}(Q_{i+1})-\bmf{\Phi}(Q_i)\right)\times \left( \frac{\partial\tilde{\bmf{m}}}{\partial \bmf{\nu}}-\frac{\partial\tilde{\bmf{m}}_h^k}{\partial \bmf{\nu}}\right)\right)\cdot (\bmf{v}_h-I_h^*\bmf{v}_h) \, \mr{d}S,
\end{aligned}
\end{equation*}
where $k = i \,\mr{or}\, i+1$.

Owing to the inequality \cite{chou}
\begin{equation*}
    \left( \int_{\partial K_Q}(\bmf{v}_h-I_h^*\bmf{v}_h)^2\,\mr{d}S \right)^{\frac{1}{2}} \leq C h^{\frac{1}{2}}\|\bmf{v}_h\|_{\bmf{H}^1},
\end{equation*}
we obtain
\begin{equation*}
 |E_5(\bmf{\Phi};\tilde{\bmf{m}}-\tilde{\bmf{m}}_h,\bmf{v}_h)|
 \leq
 C h (\|\tilde{\bmf{m}}-\tilde{\bmf{m}}_h\|_{\bmf{H}^1}+h^{1/2}\|\tilde{\bmf{m}}-\tilde{\bmf{m}}_h\|_{\bmf{H}^1}^{1/2}\|\tilde{\bmf{m}}\|_{\bmf{H}^2}^{1/2})\|\bmf{v}_h\|_{\bmf{H}^1}.
\end{equation*}
Together with triangle inequality, it yields
\begin{equation}
|b_h(\bmf{\Phi};\tilde{\bmf{m}}_h,I_h^*\bmf{v}_h) - b_h(\bmf{\Phi};\bmf{v}_h,I_h^*\tilde{\bmf{m}}_h)|
\leq
C h \|\tilde{\bmf{m}}_h\|_{\bmf{H}^1}\|\bmf{v}_h\|_{\bmf{H}^1}, \quad \forall \tilde{\bmf{m}}_h,\bmf{v}_h\in \bmf{S}_h.
\label{bh-relation}
\end{equation}
These complete the proof.

\section{The proof of Lemma \ref{erfunction}}\label{app:lem-erfun}
To prove Lemma \ref{erfunction}, we denote
$\mathcal{A}_h(\bmf{\Phi};R_h\tilde{\bmf{m}}_h,\bmf{v}_h) = \varepsilon_a (R_h\tilde{\bmf{m}},\bmf{v}_h) - \varepsilon_b(\bmf{\Phi};R_h\tilde{\bmf{m}}_h,\bmf{v}_h)$ with
\begin{equation*}
   \varepsilon_a (R_h\tilde{\bmf{m}},\bmf{v}_h)= a(R_h\tilde{\bmf{m}},\bmf{v}_h)-a_h(R_h\tilde{\bmf{m}}_h,I_h^*\bmf{v}_h), \,\forall \bmf{m}_h,\bmf{v}_h\in \bmf{S}_h,
\end{equation*}
and
\begin{equation*}
\varepsilon_b(\bmf{\Phi};R_h\tilde{\bmf{m}}_h,\bmf{v}_h)
=
b(\bmf{\Phi};R_h\tilde{\bmf{m}}_h,\bmf{v}_h)
-
b_h(\bmf{\Phi};R_h\tilde{\bmf{m}}_h,I_h^*\bmf{v}_h),\,\forall \bmf{m}_h,\bmf{v}_h\in \bmf{S}_h.
\end{equation*}

Repeating the proof as in \cite{chat1,chat2}, we get
\begin{equation*}
|\varepsilon_h(\bmf{f},\bmf{v}_h)|
\leq
Ch^{i+j}\|\bmf{f}\|_{\bmf{H}^i}\|\bmf{v}_h\|_{\bmf{H}^j},
\quad
\bmf{f}\in \bmf{H}^i, i,j=0,1, \, \bmf{v}_h\in \bmf{S}_h,
\end{equation*}
and
\begin{equation}
|\varepsilon_a(R_h \tilde{\bmf{m}},\bmf{v}_h)|
\leq Ch^{i+j}\|\tilde{\bmf{m}}\|_{\bmf{H}^{1+i}}\|\bmf{v}_h\|_{\bmf{H}^j},
\quad
\tilde{\bmf{m}}\in \bmf{H}^{1+i}\cap \bmf{H}^1_0,i,j=0,1,\,\bmf{v}_h\in \bmf{S}_h.
\label{erfun-a1}
\end{equation}
For the bound of $|\varepsilon_b(\bmf{\Phi};R_h \tilde{\bmf{m}},\bmf{v}_h)|$,
we consider the linear operator
$$L_1|_{K_Q} : \bmf{P}_1(K_Q) \rightarrow \bmf{L}^2(K_Q), \quad K_Q \in T_h, $$
which satisfies $\int_{K_Q} L_1 C \mr{d} x \mr{d} y= \int_{K_Q} C \mr{d} x\mr{d}y$.
Then for all constants $C$ and $K_Q \in T_h$,
\begin{equation}
    \|\bmf{v}_h-L_1\bmf{v}_h\|_{\bmf{L}^q(K_Q)} \leq h_{K_Q} |\bmf{v}_h|_{\bmf{W}^{1,q}(K_Q)},
    \, \forall \, \bmf{v}_h \in \bmf{S}_h, \, 1 \leq q < \infty.
    \label{L1}
\end{equation}
Meanwhile, consider the linear operator
$$L_2|_{\partial K_Q} : \bmf{P}_1(K_Q) \rightarrow \bmf{L}^2(\partial K_Q),$$
which satisfies the following properties:
\begin{equation*}
    L_2 C |_{\partial K_Q} = C, \quad \mr{for \, all \, constants} \, C,
    \label{L21}
\end{equation*}
\begin{equation*}
    \int_{\partial K_Q} \bmf{v}_h \, \mr{d} S = \int_{\partial K_Q} L_2 \bmf{v}_h \, \mr{d}S,
    \quad   \forall \, \bmf{v}_h \,\in \bmf{P}_1(K_Q),
\label{L22}
\end{equation*}
\begin{equation*}
    \|L_2 \bmf{v}_h\|_{\bmf{L}^{\infty}(\partial K_Q)} \leq \|\bmf{v}_h\|_{\bmf{L}^{\infty}(\partial K_Q)},
    \, \forall \bmf{v}_h \, \in \, \bmf{S}_h.
    \label{L23}
\end{equation*}
Repeating the proof of Lemma 6.1 in \cite{chat2}, we can obtain:
Given $ \bmf{\Phi} \in \bmf{X}$,
for arbitrary triangle element $K_Q$,
there exists a constant $C$ independent of $K_Q$ such that
\begin{equation*}
    \left|\int_{\partial K_Q} (\bmf{\Phi} \times \nabla R_h\tilde{\bmf{m}})\cdot(\bmf{v}_h-L_2\bmf{v}_h) \,\mr{d}S \right|
    \leq C h_{K_Q} |\tilde{\bmf{m}}|_{\bmf{H}^1(K_Q)}|\bmf{v}_h|_{\bmf{H}^1(K_Q)},
    \quad \forall\,\bmf{m} \in \bmf{H}^1(K_Q), \, \bmf{v}_h \in \bmf{S}_h.
    \label{L11}
\end{equation*}
Applying Green's formula, we obtain
\begin{equation}
\begin{aligned}
    \varepsilon_b(\bmf{\Phi};R_h\tilde{\bmf{m}},\bmf{v}_h)
    =&
    \sum_{K_Q} (\mr{div}(\bmf{\Phi} \times \nabla R_h\tilde{\bmf{m}}), \bmf{v}_h - L_1 \bmf{v}_h)_{K_Q}\\
    & +
    \sum_{K_Q} (\bmf{\Phi} \times (\nabla R_h \tilde{\bmf{m}} \cdot \bmf{n}), \bmf{v}_h - L_2 \bmf{v}_h)_{\partial K_Q}\\
    :=&
    J_1 + J_2.
\end{aligned}
\label{erfun-b1}
\end{equation}
For each triangular element $K_Q$, applying Green's formula yields
\begin{equation*}
\begin{aligned}
     &\int_{K_Q}\mr{div}(\bmf{\Phi} \times \nabla R_h\tilde{\bmf{m}})\, \mr{d}x\mr{d}y\\
    =&
    \sum_{V_i}\int_{V_i \bigcap K_Q} \mr{div}(\bmf{\Phi} \times \nabla R_h\tilde{\bmf{m}}) \, \mr{d}x\mr{d}y\\
    =&
    -\sum_{V_i}\int_{\partial V_i \bigcap K_Q}(\bmf{\Phi} \times \nabla R_h\tilde{\bmf{m}})\cdot \bmf{n} \, \mr{d}S
    -\sum_{V_i}\int_{V_i \bigcap \partial K_Q}(\bmf{\Phi} \times \nabla R_h\tilde{\bmf{m}})\cdot \bmf{n} \, \mr{d}S.
\end{aligned}
\end{equation*}
Combined with \eqref{L1} and H\"{o}lder inequality, we obtain
\begin{equation}
    |J_1|
    \leq C \sum_{K_Q} \|\mr{div}(\bmf{\Phi} \times \nabla\tilde{\bmf{m}})\|_{\bmf{L}^2(K_Q)}\| \bmf{v}_h - L_1 \bmf{v}_h\|_{\bmf{L}^2(K_Q)}
    \leq C \sum_{K_Q} h_{K_Q} |\tilde{\bmf{m}}|_{\bmf{H}^1(K_Q)}|\bmf{v}_h|_{\bmf{H}^1(K_Q)}.
    \label{J1}
\end{equation}
Meanwhile, since $|\bmf{\Phi} \times (\nabla R_h \tilde{\bmf{m}} \cdot \bmf{n})|_{\bmf{H}^1(K_Q)} \leq C |\tilde{\bmf{m}}|_{\bmf{H}^1(K_Q)}$, it holds
\begin{equation}
    |J_2| \leq C \sum_{K_Q} h_{K_Q} |\tilde{\bmf{m}}|_{\bmf{H}^1(K_Q)}|\bmf{v}_h|_{\bmf{H}^1(K_Q)}.
    \label{J2}
\end{equation}
Summing the inequalities \eqref{erfun-b1}-\eqref{J2} yields
\begin{equation}
|\varepsilon_b(\bmf{\Phi};R_h\tilde{\bmf{m}},\bmf{v}_h)|
\leq Ch^{i+j}\|\tilde{\bmf{m}}\|_{\bmf{H}^{1+i}}\|\bmf{v}_h\|_{\bmf{H}^j},
\quad
\bmf{v}\in \bmf{H}^{1+i}\cap \bmf{H}^1_0,i,j=0,1.
\label{erfun-b}
\end{equation}
Together with \eqref{erfun-b} and \eqref{erfun-a1}, it yields \eqref{erfun-A1}.

\section{The proof of Proposition \ref{lem-t}}\label{app:lem-ah-t}
We rewrite $a_h(\bmf{m}_h,I_h^*\partial_t\bmf{m}_h)$ into
\begin{equation*}
a_h(\bmf{m}_h,I_h^*\partial_t\bmf{m}_h)
=
\sum_{K_Q \in T_h} {I}_{K_Q}(\bmf{m}_h,I_h^*\partial_t\bmf{m}_h),
\end{equation*}
where
\begin{equation*}
{I}_{K_Q}(\bmf{m}_h,I_h^*\partial_t\bmf{m}_h)
=
-\sum_{P \in \dot{K}_Q} \left[ \int_{\partial V_i \cap K_Q} \frac{\partial \bmf{m}_h}{\partial \bmf{\nu}} \,\mr{d}S\right]\cdot\partial_t\bmf{m}_h(P).
\end{equation*}
Since $ \frac{\partial \bmf{m}_h}{\partial x} $ and $ \frac{\partial\bmf{m}_h}{\partial y}$ are constants in the triangular element $K_Q$,
we have
\begin{equation*}
\begin{aligned}
&{I}_{K_Q}
 (\bmf{m}_h,I_h^*\partial_t\bmf{m}_h)\\ \nn
 =
 & -\sum_{i=1}^3 \partial_t\bmf{m}_h(P_i)\cdot\left[\int_{\overline{M_iQM_{i+2}}} \frac{\partial \bmf{m}_h}{\partial \bmf{\nu}} \,\mr{d}S\right] \\
=
& -\sum_{i=1}^3 \partial_t\bmf{m}_h(P_i) \cdot \Big[ \int_{\overline{M_iQM_{i+2}}} \frac{\partial \bmf{m}_h}{\partial x} \,\mr{d}y
-  \int_{\overline{M_iQM_{i+2}}} \frac{\partial \bmf{m}_h}{\partial y} \, \mr{d}x \Big] \\
=
& -\sum_{i=1}^3 \partial_t\bmf{m}_h(P_i) \cdot \left[ \frac{\partial \bmf{m}_h}{\partial x} (y_{M_{i+2}}-y_{M_i})
-                                   \frac{\partial \bmf{m}_h}{\partial y}  (x_{M_{i+2}}-x_{M_i})\right]\\ \nn
=
& S_{K_Q} \left[ \frac{\partial \bmf{m}_h}{\partial x} \cdot \partial_t \left( \frac{\partial \bmf{m}_h}{\partial x} \right) + \frac{\partial \bmf{m}_h}{\partial y} \cdot \partial_t \left( \frac{\partial \bmf{m}_h}{\partial y}\right)\right]\\ \nn
=
& S_{K_Q}  \partial_t |\bmf{m}_h|_{\bmf{H}^1(K_Q)}^2.
\end{aligned}
\end{equation*}
Therefore,
\begin{equation*}
a_h(\bmf{m}_h,I_h^*\partial_t\bmf{m}_h)
=
\sum_{K_Q \in T_h} S_{K_Q} \partial_t |\bmf{m}_h|_{\bmf{H}^1(K_Q)}^2
\geq
C \partial_t |\| \bmf{m}_h |\|_{h}^2.
\end{equation*}
\section*{Acknowledgments}
This work was supported in part by the grants NSFC 12271360, 11501399, and Jiangsu Provincial Scientific Research Center of Applied Mathematics under Grant No. BK20233002 (R. Du).

\bibliographystyle{amsplain}%×ÖÄ¸Ë³Ðò
\bibliography{ref}

\end{document}